\pgfplotsset{compat=newest,scaled y ticks=true} 
\definecolor{codegreen}{rgb}{0,0.6,0}
\definecolor{codegray}{rgb}{0.5,0.5,0.5}
\definecolor{codepurple}{rgb}{0.58,0,0.82}
\definecolor{backcolour}{rgb}{0.95,0.95,0.92}
\lstdefinestyle{mystyle}{
	backgroundcolor=\color{backcolour},   
	commentstyle=\color{codegreen},
	keywordstyle=\color{magenta},
	stringstyle=\color{codepurple},
	basicstyle=\ttfamily\footnotesize,
	breakatwhitespace=false,         
	breaklines=true,                 
	captionpos=b,                    
	keepspaces=true,                  
	showspaces=false,                
	showstringspaces=false,
	showtabs=false,                  
	tabsize=1
}
\theoremstyle{definition}
\newtheorem{theorem}{\textbf{Theorem}}[section]
\newtheorem{corollary}{Corollary}[theorem]
\newtheorem{lemma}[theorem]{\textbf{Lemma}}
\newtheorem{remark}[theorem]{\textbf{Remark}}
\newtheorem{assumption}{\textbf{Assumption}}[section]
\newcommand{\temporalInterval}{I_T}
\newcommand{\spDom}{\Omega} 
\newcommand{\stDom}{U} 
\newcommand{\xcomp}[1]{x_{#1}}
\newcommand{\xii}{\xcomp{i}}
\newcommand{\ucomp}[1]{u_{#1}}
\newcommand{\xvec}{\mvec{x}}
\newcommand{\xvecst}{\hat{\mvec{x}}}
\newcommand{\uc}{\bar{\uvec}}
\newcommand{\uf}{\uvec'}
\newcommand{\pc}{\bar{p}}
\newcommand{\pf}{p'}
\newcommand{\avec}{\mvec{a}}
\newcommand{\bvec}{\mvec{b}}
\newcommand{\fvec}{\mvec{f}}
\newcommand{\fvecTilde}{\tilde{\mvec{f}}}
\newcommand{\gvec}{\mvec{g}}
\newcommand{\uvec}{\mvec{u}}
\newcommand{\uvecTilde}{\tilde{\mvec{u}}}
\newcommand{\vvec}{\mvec{v}}
\newcommand{\wvec}{\mvec{w}}
\newcommand{\uvech}{\mvec{u}_h}
\newcommand{\vvech}{\mvec{v}_h}
\newcommand{\wvech}{\mvec{w}_h}
\newcommand{\zerovec}{\mathbf{0}}
\newcommand{\ph}{p_h}
\newcommand{\qh}{q_h}
\newcommand{\rh}{r_h}
\newcommand{\partialder}[2]{\frac{\partial #1}{\partial #2}}
\newcommand{\timeder}[1]{\partialder{#1}{t}}
\newcommand{\taum}{\tau_m}
\newcommand{\tauc}{\tau_c}
\newcommand{\taumMax}{\taum^{M}}
\newcommand{\taucMax}{\tauc^{M}}
\newcommand{\visco}{\nu}
\newcommand{\pder}[1]{\partialder{p}{\xii}}
\newcommand{\normalvec}{\hat{\mvec{n}}}
\newcommand{\spaceV}{\mvec{V}}
\newcommand{\spaceVd}{\mvec{V}^D}
\newcommand{\spaceVh}{{\spaceV}_h}
\newcommand{\spaceVhd}{\spaceVh^D}
\newcommand{\spaceQ}{Q}
\newcommand{\spaceQh}{{\spaceQ}_h}
\newcommand{\spaceHdiv}{\mvec{H}_{\text{div}}}
\newcommand{\interpolant}{I_h}
\newcommand{\interp}[1]{\interpolant #1}
\newcommand{\nsd}{d}
\newcommand{\nsdt}{\hat{d}}
\newcommand{\adv}{\mvec{a}}
\newcommand{\bit}{\Gamma_{0}}
\newcommand{\bft}{\Gamma_{T}}
\newcommand{\bsp}{\Gamma_S}
\newcommand{\mesh}{K^h}
\newcommand{\bftmesh}{\Gamma_{T}^h}
\newcommand{\grad}{\mvec{{\nabla}}}
\newcommand{\divergence}{\grad\cdot}
\newcommand{\ddiv}[1]{#1\cdot\grad}
\newcommand{\adiv}{\avec\cdot\grad}
\newcommand{\gradst}{\tilde{{\mvec{{\nabla}}}}}
\newcommand{\laplacian}{\Delta}
\newcommand{\bform}{B}
\newcommand{\bformUnstab}{A}
\newcommand{\bformh}{\bform_h}
\newcommand{\lform}{L}
\newcommand{\lformh}{\lform_h}
\newcommand{\mvec}[1]{{\mathbold{#1}}}
\newcommand{\mmat}[1]{{\mathbf{#1}}}
\newcommand{\intTimeSpace}{\int_0^T\int_{\spDom}}
\newcommand{\intSpaceTime}{\int_{\spDom}\int_0^T}
\newcommand{\intSpace}{\int_{\spDom}}
\newcommand{\intTime}{\int_0^T}
\newcommand{\intSTDom}{\int_{\stDom}}
\newcommand{\inner}[2]{\left( #1, #2 \right)}
\newcommand{\innerh}[2]{\left( #1, #2 \right)_h}
\newcommand{\innerFT}[2]{\left( #1, #2 \right)_{\Gamma_T}}
\newcommand{\innerFTh}[2]{\left( #1, #2 \right)_{\Gamma_T,h}}
\newcommand{\norm}[1]{\left\| #1 \right\|}
\newcommand{\normL}[3][2]{\left\| #2 \right\|_{L^{#1}(#3)}}
\newcommand{\normH}[3][1]{\left\| #2 \right\|_{H^{#1}(#3)}}
\newcommand{\seminorm}[1]{\left| #1 \right|}
\newcommand{\normh}[1]{\left\| #1 \right\|_h}
\newcommand{\normFT}[1]{\left\| #1 \right\|_{\bft}}
\newcommand{\normFTh}[1]{\left\| #1 \right\|_{\bft,h}}
\newcommand{\normElm}[1]{\left\| #1 \right\|_e}
\newcommand{\vertiii}[1]{{\left\vert\kern-0.25ex\left\vert\kern-0.25ex\left\vert #1 \right\vert\kern-0.25ex\right\vert\kern-0.25ex\right\vert}}
\newcommand{\normVW}[2][\avec]{\vertiii{#2}_{#1}}
\newcommand{\uproxy}{\gamma}
\newcommand{\oneOver}[1]{\frac{1}{#1}}
\newcommand{\half}{\frac{1}{2}}
\newcommand{\halfnice}{\nicefrac{1}{2}}
\newcommand{\approxConstant}{C_{a}}
\newcommand{\traceConstant}{C_{\Gamma}}
\newcommand{\shapeRegularConst}{C_h}
\newcommand{\petsc}{\textsc{PETSc}}
\newcommand{\parmetis}{\textsc{ParMETIS}}
\newcommand{\ut}{\partial_t \uvec}
\newcommand{\vt}{\partial_t \vvec}
\newcommand{\opLAdv}[1][a]{L_{{#1}}}
\newcommand{\opLTAdv}[1][a]{M_{{#1}}}
\newcommand{\opLAdvStar}[1][a]{L_{{#1}}^*}
\newcommand{\opLTAdvStar}[1][a]{M_{{#1}}^*}
\newcommand{\davec}{\delta\avec^{21}}
\newcommand{\duh}{\delta\tilde{\uvec}_h}
\newcommand{\dph}{\delta\tilde{p}_h}
\newcommand{\dt}{\Delta t}
\newcommand{\upair}{\phi}
\newcommand{\upairf}{\phi'}
\newcommand{\upairh}{\phi_h}
\newcommand{\vpair}{\psi}
\newcommand{\vpairh}{\psi_h}
\newcommand{\cinv}{C_{I}}
\newcommand{\elemVol}{\stDom^e}
\newcommand{\elemFT}{\bft^e}
\newcommand{\elmsum}{\Xsum_{e \in \mesh}}
\newcommand{\elmsumFT}{\Xsum_{e \in \bftmesh}}
\newcommand{\hzero}{h_0}
\newcommand{\he}{h_e}
\newcommand{\eep}{E_e^p}
\newcommand{\eeu}{E_e^u}
\newcommand{\eepFT}{F_e^p}
\newcommand{\eeuFT}{F_e^u}
\newcommand{\coer}{C_s}
\newcommand{\eie}[1]{#1 - \interp{#1}}
\newcommand{\setbuilder}[1]{\left\{ #1 \right\}}
\newcommand{\colref}[2]{\hyperref[#2]{#1~\ref*{#2}}}
\newcommand{\secref}[1]{\colref{Section}{#1}}
\newcommand{\figref}[1]{\colref{Figure}{#1}}
\newcommand{\tabref}[1]{\colref{Table}{#1}}
\newcommand{\lemref}[1]{\colref{Lemma}{#1}}
\newcommand{\cororef}[1]{\colref{Corollary}{#1}}
\newcommand{\thmref}[1]{\colref{Theorem}{#1}}
\newcommand{\assumref}[1]{\colref{Assumption}{#1}}
\newcommand{\logLogSlopeTriangle}[5]
{

    \pgfplotsextra
    {
        \pgfkeysgetvalue{/pgfplots/xmin}{\xmin}
        \pgfkeysgetvalue{/pgfplots/xmax}{\xmax}
        \pgfkeysgetvalue{/pgfplots/ymin}{\ymin}
        \pgfkeysgetvalue{/pgfplots/ymax}{\ymax}

        \pgfmathsetmacro{\xArel}{#1}
        \pgfmathsetmacro{\yArel}{#3}
        \pgfmathsetmacro{\xBrel}{#1-#2}
        \pgfmathsetmacro{\yBrel}{\yArel}
        \pgfmathsetmacro{\xCrel}{\xArel}

        \pgfmathsetmacro{\lnxB}{\xmin*(1-(#1-#2))+\xmax*(#1-#2)} 
        \pgfmathsetmacro{\lnxA}{\xmin*(1-#1)+\xmax*#1} 
        \pgfmathsetmacro{\lnyA}{\ymin*(1-#3)+\ymax*#3} 
        \pgfmathsetmacro{\lnyC}{\lnyA+#4*(\lnxA-\lnxB)}
        \pgfmathsetmacro{\yCrel}{\lnyC-\ymin)/(\ymax-\ymin)} 

        \coordinate (A) at (rel axis cs:\xArel,\yArel);
        \coordinate (B) at (rel axis cs:\xBrel,\yBrel);
        \coordinate (C) at (rel axis cs:\xCrel,\yCrel);

        \draw[#5]   (A)-- node[pos=0.5,anchor=north] {1}
                    (B)-- 
                    (C)-- node[pos=0.5,anchor=west] {#4}
                    cycle;
    }
}
\DeclareFontFamily{U} {cmex}{}
\DeclareFontShape{U}{cmex}{m}{n}{
 <-6> cmex5
 <6-7> cmex6
 <7-8> cmex7
 <8-9> cmex8
 <9-10> cmex9
 <10-12> cmex10
 <12-> cmex12}{}
\DeclareSymbolFont{Xcmex} {U} {cmex}{m}{n}
\DeclareMathSymbol{\Xsum}{\mathop}{Xcmex}{80}
\begin{document}
	
\begin{frontmatter}
	\title{\textbf{Solving fluid flow problems in space-time with multiscale stabilization: formulation and examples}}
	
	
	\author[ISUME]{Biswajit Khara
		\texorpdfstring{\corref{CORRAUTH}}{}}
	\emailauthor{bkhara@iastate.edu}{Biswajit Khara}
	\author[CUTME]{Robert Dyja}
	\author[ISUME]{Kumar Saurabh}
	\author[ISUAER]{Anupam Sharma}
	\author[ISUME]{Baskar Ganapathysubramanian \texorpdfstring{\corref{CORRAUTH}}{}}
	\emailauthor{baskarg@iastate.edu}{Baskar Ganapathysubramanian}
	\address[ISUME]{Department of Mechanical Engineering, Iowa State University, Ames, IA, USA}
	\address[ISUAER]{Department of Aerospace Engineering, Iowa State University, Ames, IA, USA}
	\address[CUTME]{Faculty of Mechanical Engineering and Computer Science, Czestochowa University of Technology, Czestochowa, Poland}
	\cortext[CORRAUTH]{Corresponding authors}
	
	
	\begin{abstract}
		We present a space-time continuous-Galerkin finite element method for solving incompressible Navier-Stokes equations. To ensure stability of the discrete variational problem, we apply ideas from the variational multi-scale method. The finite element problem is posed on the ``full" space-time domain, considering time as another dimension. We provide a rigorous analysis of the stability and convergence of the stabilized formulation. And finally, we apply this method on two benchmark problems in computational fluid dynamics, namely, lid-driven cavity flow and flow past a circular cylinder. We validate the current method with existing results from literature and show that very large space-time blocks can be solved using our approach.
	\end{abstract}
\end{frontmatter}
	
	\section{Introduction}
\label{sec:intro}

Transient physical processes are typically described by time-dependent partial differential equations (PDEs). Mathematical analysis shows that, given smooth and compatible initial condition and boundary conditions, many of these equations have solutions of certain regularity in both space and time dimensions \cite{evans2022partial}. When solving these equations numerically, the most common practice is to use the method of lines which has two variants: (i) where the PDE is discretized in space to obtain a large system of ordinary differential equations (ODEs), which are then integrated in time; and (ii) where the equations are discretized is time first, to obtain a continuous PDE in spatial variables, which are then solved using techniques for solving stationary PDEs \cite{leveque2007finite}.

The nature of the method of lines makes it a sequential process. To illustrate this, suppose $ \temporalInterval = [0,T] $ denotes a time interval of interest. In the method of lines, we discretize this interval into $ n_t $ finite time steps, each of length $ \Delta t = \frac{T}{(n_t - 1)} $. So the PDE of interest, now semidiscrete, has to be solved at time points: $ \Delta t,\ 2\Delta t,\ 3\Delta t,...,\ (n_t-1)\Delta t = T $. In such algorithms, the solution at a particular step depends on the solutions at previous time steps. Thus, the solution process essentially becomes evolutionary: marching from one time step to the next, thereby mimicking the physical process itself. 

But, it is not necessary for a numerical method to really mimic the physical process in its evolutionary characteristics. Prior work \citep{nievergelt1964parallel,Hackbusch:1985:PMM:4673.4714,chartier1993parallel,horton1995space,gander1996overlapping,gander1999optimal,LionsEtAl2001} has shown that solving for a solution at a later time does not have to wait for the solutions of intermediate points to finish. Rather, all these computations can progress in parallel. A brief review of some of these methods can be found in \cite{gander201550}. A common theme that runs through these methods is the idea of parallelization of computation, possibly coupled with decomposing the spatiotemporal domain (i.e., the tensor product of the spatial domain and the time window) into multiple smaller subdomains.

Multiple types of decompositions have been proposed in the context of space-time parallelism. Early works such as \cite{nievergelt1964parallel,saha1996parallel} or more recent works such as \cite{LionsEtAl2001} decompose the PDE primarily in time. Such methods fall under the methods relying on temporal domain decomposition. Similarly, works based on waveform relaxation methods \cite{gander1996overlapping,gander1999optimal} achieve decomposition primarily in spatial domain. Domain decomposition in both space and time was considered by \cite{Hackbusch:1985:PMM:4673.4714, horton1995space, lubich1987multi}, where mulitigrid approach was adopted to solve the spatiotemporal system of equations. But it was noticed that the multigrid coarsening does not work the same way in time as it would in the spatial dimension; in particular, coarsening in time may not always lead to right convergence. 

In the context of finite element method (FEM), a method of lines discretization usually employs a finite difference scheme (e.g., Euler scheme), and many space-time parallel implementations rely on them \cite{LionsEtAl2001, dyja2018parallel}. But works such as \cite{hughes1988space, hulbert1990space} have applied discontinuous Galerkin (dG) discretization in the time dimension in tandem with continuous Galerkin (cG) approximation in space. These methods are also commonly known as ``space-time finite element methods'', but these are still sequential in nature, i.e., one usually solves one ``slab'' of time-window before solving another in order. Some more examples of this kind of discretization can be found in \cite{pontaza2004space, tezduyar2006space, tezduyar2008arterial, scovazzi2007stabilized, song2015nitsche, gander2016analysis}.

Recently, efforts have been made to apply continuous Galerkin method for time discretization \cite{munts2006space, behr2008simplex,steinbach2015space, langer2016space, fuhrer2021space}. And works such as \cite{ishii2019solving} explore the scalability and advantages of solving these formulations on large computing clusters. Most of these works are limited to linear parabolic equations such as the heat equation, but they do highlight some of the issues of approximating the dependence of the solution on time through continuous basis functions, in particular, the need for a stabilized method. Theoretical analysis \cite{andreev2012stability} of this problem also refers to a need for such a method. 

In this work, our goal is to develop a fast and scalable \textit{continuous Galerkin} method for a space-time coupled discretization of the Navier-Stokes equations. As will be discussed in \secref{sec:formulations}, the stabilization requirements for the space-time discretization of the Navier-Stokes equations reduce to the following: (i) stabilization of the dominant convection terms and (ii) stabilization of the saddle point nature of the incompressible Navier-Stokes equations. The first kind of stability can be provided by multiple ways, including the streamline upwind Petrov Galerkin (SUPG) method \cite{brooks1982streamline}, the Galerkin Least Square (GLS) method \cite{hughes1989new,douglas1989absolutely}, or the variational multiscale (VMS) method \cite{hughes1995multiscale}. The second type of stability can be provided by the pressure stabilized Petrov Galerkin (PSPG) method \cite{hughes1986new}. But it has been shown that the application of the multiscale ideas (i.e., VMS) can naturally lead to a stabilization that include both SUPG and PSPG type of stabilizations \cite{bazilevs2007variational}, along with a grad-divergence type stabilization.

Thus, in this paper, we argue that VMS also helps stabilize the space-time variational equations in a continuous Galerkin setting. The variational problem then can be solved in a domain-decomposed manner to obtain a fully coupled space-time problem that also translates into a stable linear algebra problem. Therefore, our contributions in this paper are as follows:
\begin{enumerate}
	\item A continuous Galerkin method for solving the Navier-Stokes equations in coupled space-time.
	\item An application of VMS to stabilize the linear algebra problem against both spatial and temporal convective effects and with equal-order velocity-pressure pair spaces.
	\item A rigorous analysis of the space-time variational problem.
	\item Validation of the method against benchmark problems.
\end{enumerate}

The rest of the paper is organized as follows:  \secref{sec:formulations} presents the mathematical background and derives the variational formulation, \secref{sec:analysis} provides an analysis of the variational problem, \secref{sec:implementation} provides the implementations details, \secref{sec:num-examples-results} presents the numerical experiments and validation results; and  \secref{sec:conclusions} provides some discussions and conclusions.

\section{Space-time variational formulation}
\label{sec:formulations}
\subsection{Preliminaries} \label{sec:prelim}
\begin{figure}[!t]
	\centering
	\def \mpagefrac{0.7}
	\begin{minipage}{\mpagefrac\textwidth}
		\centering
		\begin{tikzpicture}
		\def \myscale{\mpagefrac / 0.5} 
		\draw (0, 0) node[inner sep=0] {\includegraphics[trim=3.3in 0.5in 3.3in 0.8in, clip,width=0.7\linewidth]{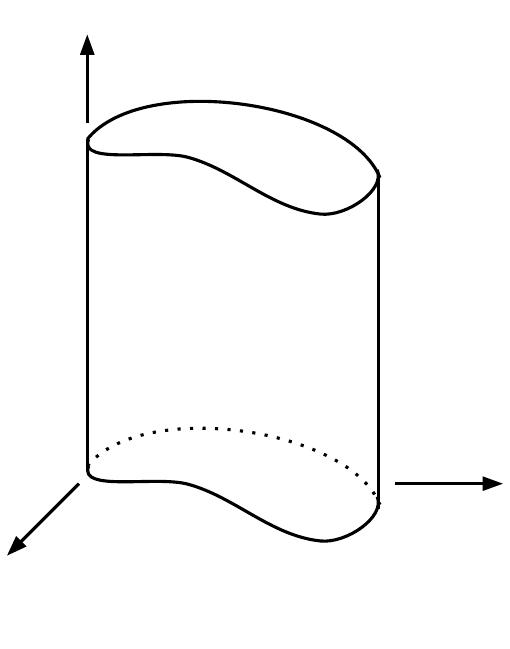}};
		\draw (0, 0) node {$ \stDom = \spDom \times \temporalInterval $};
		\draw (-2.5*\myscale, 3.0*\myscale) node {$t$};
		\draw (-2.7*\myscale,  2.1*\myscale) node {$t=T$};
		\draw (-2.7*\myscale, -1.5*\myscale) node {$t=0$};
		\draw (-2.2*\myscale, -2.7*\myscale) node {$x_1$};
		\draw ( 2.6*\myscale, -1.5*\myscale) node {$x_2$};
		\draw[stealth-, thick] (-0.2*\myscale, -1.6*\myscale) -- ++(0.5*\myscale,-1.3*\myscale) node[right, black, fill=white, anchor=north]{$\bit = \spDom\times \{0\}$};
		\draw[stealth-, thick] (-0.2*\myscale, 2.1*\myscale) -- ++(0.2*\myscale,1.0*\myscale) node[right, black, fill=white, anchor=south]{$\bft = \spDom\times \{T\}$};
		\draw[stealth-, thick] (1.45*\myscale, 0.1*\myscale) -- ++(0.7*\myscale,1.0*\myscale) node[right, black, fill=white, anchor=south west]{$\bsp = \partial \spDom \times (0,T)$};
		\end{tikzpicture}
	\end{minipage}
	\caption{Schematic depiction of the space-time domain $ \stDom = \spDom \times \temporalInterval $, where $ \spDom \subset \mathbb{R}^{\nsd} $, and $ \temporalInterval = [0,T] \subset \mathbb{R}^{+}. $} 
	\label{fig:schematic-space-time-domain}
\end{figure}
Consider an open, bounded spatial domain $\spDom \in \mathbb{R}^\nsd, \nsd\in\{2,3\}$ with Lipschitz continuous boundary $\partial \spDom$ and a bounded time interval $\temporalInterval = (0,T] \in \mathbb{R^+}$. We define the space-time domain as the Cartesian product of the two: $\stDom = \spDom \times \temporalInterval = \spDom\times(0,T] \subset \mathbb{R}^{\nsdt}$, $\nsdt = \nsd + 1$ (see \figref{fig:schematic-space-time-domain}). Suppose $ \xvec\in\spDom $ and $ t\in\temporalInterval $ and define $ \xvecst = (\xvec, t)^T \in \stDom$. The overall boundary of this space-time domain is defined as $\Gamma = \partial \stDom$. This ``overall boundary" is the union of the ``spatial boundaries" and the ``time boundaries". The spatial domain boundary is denoted by $\bsp = \partial \spDom \times (0,T)$; whereas the time boundaries are denoted by $\bit = \bar{\spDom}\times\{0\}$ and $\bft = \bar{\spDom}\times\{T\}$ which are the initial and final time boundaries respectively. Therefore $\Gamma = \bsp \cup \bit \cup \bft$.

On this space-time domain $ \stDom $, the incompressible Navier-Stokes equations can then be written for the vector function $\uvec = \uvec(\xvecst): \stDom \rightarrow \mathbb{R}^\nsd$ and the scalar function $ p = p(\xvecst):\stDom\rightarrow \mathbb{R} $ as:
\begin{subequations}\label{eq:ns-original}
	\begin{align}
	\timeder{\uvec}+(\uvec\cdot\grad)\uvec + \half(\divergence\uvec)\uvec -\visco\laplacian \uvec+\grad p - 
	\fvec &= \zerovec, \text{ }\xvecst\in\stDom, \label{eq:ns-original-momentum}
	\\
	\grad\cdot\uvec &= 0, \text{ }\xvecst\in\stDom, \label{eq:ns-original-continuity}
	\\
	\uvec &= \zerovec, \text{ }\xvecst\in\bsp, \label{eq:ns-original-dirichlet}
	\\
	\uvec &= \uvec_0, \text{ }\xvecst\in\bit, \label{eq:ns-original-ic}
	\end{align}
\end{subequations}
where, $ \uvec = (\ucomp{1}, \ldots, \ucomp{\nsd})^T $ is the velocity vector, $ p $ is the pressure, $ \nu $ is the kinematic viscosity (assumed constant), and $\fvec : \stDom \rightarrow \mathbb{R}^\nsd$ is the forcing function (assumed to be smooth). Note that $\grad$ is the usual spatial gradient operator in the space $\mathbb{R}^{\nsd}$, i.e., $ \grad \equiv (\nicefrac{\partial}{\partial x}, \nicefrac{\partial}{\partial y}) $ for $ \nsd = 2 $ and $ \grad \equiv (\nicefrac{\partial}{\partial x}, \nicefrac{\partial}{\partial y}, \nicefrac{\partial}{\partial z}) $ for $ \nsd = 3 $ respectively. We further define the space-time gradient operator $\gradst$ as: $ \gradst \equiv (\grad, \nicefrac{\partial}{\partial t}) $.


Let us define two operators $ \opLAdv[\avec] $ and $ \opLTAdv[\avec] $ as follows:
\begin{subequations} \label{eq:def-la-ma}
	\begin{align}
	\opLAdv[\avec] \uvec &= \adiv \uvec + \halfnice (\divergence \avec) \uvec, \label{eq:opL} \\
	\text{and}\ \ \opLTAdv[\avec] \uvec &= \ut + \opLAdv[\avec] \uvec = \ut + \adiv \uvec + \halfnice (\divergence \avec) \uvec. \label{eq:opLT}
	\end{align}
\end{subequations} 
For a given $ \avec $, the operators $ \opLAdv[\avec]\uvec $ and $ \opLTAdv[\avec]\uvec $ are linear in $ \uvec $, whereas $ \opLAdv[\uvec]\uvec,\ \opLTAdv[\uvec]\uvec $ are nonlinear. With this notation, we can rewrite \eqref{eq:ns-original-momentum} as
\begin{align} \label{eq:ns-linearized-0}
\opLTAdv[\uvec]\uvec - \visco\laplacian\uvec + \grad p &= \fvec \ \text{in} \ \stDom.
\end{align}

\begin{remark}
	Note that in \eqref{eq:ns-original-momentum} and \eqref{eq:def-la-ma}, we have included the additional terms $ \halfnice(\divergence\uvec)\uvec $ and $ \halfnice(\divergence\avec)\uvec $ respectively. This term was introduced in \cite{temam1968methode}, and has been included in the literature to ensure dissipativity of  various numerical schemes, especially when the function spaces are not divergence-free \cite{shen1996error, shen1997pseudo, codina2001stabilized}. In addition, the presence of this term makes the operators $ \opLAdv[\avec] $ and  $ \opLTAdv[\avec] $ skew-symmetric (see \autoref{item:adjoint-op} in \secref{sec:anlys-prelim}).
\end{remark}

\begin{remark}
	The case of non-zero boundary  conditions will be discussed in \secref{sec:nonzero-icbc}.
\end{remark}

\subsubsection{Space-time inner products and norms}
\label{sec:inner-prod-norms}
The $ L^2 $-inner product on the space-time domain $ \stDom $ is expressed as: 
\begin{itemize}
	\item Scalar-valued functions:
	\begin{align} \label{eq:inner-prod-scalar}
	\inner{a}{b} &:= \inner{a}{b}_{\stDom} = \intSTDom  a(\xvecst)b(\xvecst) \text{ }d\xvecst = \intTimeSpace a(\xvec, t)b(\xvec,t) \text{ } d\mvec{x} dt.
	\end{align}
	\item Vector-valued functions:
	\begin{align} \label{eq:inner-prod-vector}
	\inner{\mvec{a}}{\mvec{b}} &:= \inner{\mvec{a}}{\mvec{b}}_{\stDom} = \intSTDom  \mvec{a}(\xvecst)^T \mvec{b}(\xvecst) \text{ }d\xvecst = \intTimeSpace \mvec{a}(\xvec, t)\cdot\mvec{b}(\xvec,t) \text{ } d\mvec{x} dt,
	\end{align}
	\item Gradients of vector-valued functions:
	\begin{align} \label{eq:inner-prod-gradients}
	\inner{\grad \mvec{a}}{\grad \mvec{b}} &:=   \Xsum_{i=1}^{\nsd} \inner{\grad a_i}{\grad b_i},
	\end{align}
	where $ a_i $ and $ b_i $ are the scalar components of $ \avec $ and $ \bvec $ respectively. Each inner product $ \inner{\grad a_i}{\grad b_i} $ is calculated according to \eqref{eq:inner-prod-vector}.
\end{itemize}

We define a ``temporal slice'' as a slice of $ \stDom $ obtained by fixing the time at a particular $ t=t' $, and denote it by $ \spDom(t') $. An inner product on such a slice at $ t=t' $ will be denoted with a subscript, i.e.,
\begin{align}
\inner{\mvec{a}}{\mvec{b}}_{\spDom(t')} = \int_{\spDom(t')} \mvec{a}(\xvec,t=t')^T \mvec{b}(\xvec,t=t')\text{ } d \mvec{x}.
\end{align}
A special case of temporal slices is the ``final time'' slice, i.e., $ \spDom(T) $. Since $ \spDom(T) $ is the same as $ \bft $, we will denote this inner product by
\begin{align}
\innerFT{\avec}{\bvec} = \inner{\mvec{a}}{\mvec{b}}_{\spDom(T)} = \int_{\spDom(T)} \avec(\xvec,T)^T\bvec(\xvec,T) \ d\xvec.
\end{align}
Similarly, the space-time and the spatial $ L^2 $-norms are defined as:
\begin{align}
\norm{\avec} := \norm{\avec}_{\stDom} = \normL{\avec}{\stDom} &= \sqrt{\inner{\avec}{\avec}}, \\
\norm{\avec}_{\spDom(t')} := \normL{\avec}{\spDom(t')} &= \sqrt{\inner{\avec}{\avec}_{\spDom (t')}}, \ t' \in\temporalInterval, \\
\normFT{\avec} := \norm{\avec}_{L^2(\bft)} = \normL{\avec}{\spDom (T)} &= \sqrt{\inner{\avec}{\avec}_{\spDom (T)}}.
\end{align}
respectively, where $ \avec(\xvec,t) $ is any function that belongs to $ L^2(\stDom) $. In the sequel, unless otherwise specified, $ \inner{\cdot}{\cdot} $ and $ \norm{\cdot} $  will denote $ \inner{\cdot}{\cdot}_{\stDom} $ and $ \norm{\cdot}_{\stDom} $ respectively.

\begin{remark}[\textit{Integration by parts}]
	A typical integration-by-parts \textit{over spatial derivatives} can be written as follows.
	\begin{itemize}
		\item Scalar functions:
		\begin{align}
		\inner{\laplacian a}{b} = \intTimeSpace \laplacian a \ b \ d\spDom \ dt &= \intTime \left[ -\intSpace \grad a \cdot \grad b\ d\spDom + \int_{S=\partial \spDom}  (\normalvec\cdot\grad a)b \ dS \right]dt \\
		&= -\inner{\grad a}{\grad b} + \inner{\normalvec\cdot \grad a}{b}_{\bsp}.
		\end{align}
		\item Vector functions:
		\begin{align*}
		\inner{\laplacian \avec}{\bvec} = \Xsum_{i=1}^{\nsd} \inner{\laplacian a_i}{b_i} &= \Xsum_{i=1}^{\nsd} \left[-\inner{\grad a_i}{\grad b_i} + \inner{\normalvec\cdot \grad a_i}{b_i}_{\bsp} \right] \\
		&= -\inner{\grad \avec}{\grad \bvec} + \inner{(\normalvec\cdot \grad)\avec}{\bvec}_{\bsp}.
		\end{align*}
	\end{itemize}	
	Similarly, integration by parts \textit{over temporal derivatives} can be written as:
	\begin{itemize}
		\item Scalar functions:
		\begin{align*}
		\inner{\partial_t a}{b} = \intSpaceTime \partial_t a\ b\ dt d\spDom &= \intSpace \left[-\intTime a\ \partial_t b\ dt + ab \big|_{0}^{T} \right]d\spDom \\
		&= -\inner{a}{\partial_t b} + \intSpace a(\xvec,T)b(\xvec,T)d\spDom - \intSpace a(\xvec,0)b(\xvec,0) d\spDom \\
		&= -\inner{a}{\partial_t b} + \innerFT{a}{b} - \inner{a}{b}_{\bit}
		\end{align*}
		\item Vector functions:
		\begin{align*}
		\inner{\partial_t \avec}{\bvec} = \Xsum_{i=1}^{\nsd} \inner{\partial_t a_i}{b_i} &= -\inner{a_i}{\partial_t b_i} + \innerFT{a_i}{b_i} - \inner{a_i}{b_i}_{\bit} \\
		&= -\inner{\avec}{\partial_t \bvec} + \innerFT{\avec}{\bvec} - \inner{\avec}{\bvec}_{\bit}.
		\end{align*}
	\end{itemize}
\end{remark}

\subsubsection{Discretization, discrete inner products, and discrete norms}
\label{sec:discretization}
We define a tessellation $\mesh$ as the partition of $\stDom$ into a finite number of non-overlapping $(\nsd+1)$-dimensional elements $e$ such that the closure of $\stDom$ is given by $\bar{\stDom} \approx \bigcup_{e \in \mesh} e$. Let us denote the projection of $\mesh$ on the $\nsd$-dimensional slice of $\stDom$ at $t=T$ by $\bftmesh$, such that the closure of the ``final-time boundary'' $\bft$ is given by $\bar{\bft} \approx \bigcup_{e \in \bftmesh} e $.  Given a mesh $ \mesh $ (and consequently $\bftmesh$), we define the following notations for discrete inner products and norms on the mesh $ \mesh $ and $\bftmesh$:
\begin{subequations} \label{eq:discrete-norms}
	\begin{align}
		\innerh{\avec}{\bvec} &:= \Xsum_{e \in \mesh} \inner{\avec}{\bvec}_{e}, \quad
		\normh{\avec} := \left[\Xsum_{e \in \mesh} \inner{\avec}{\avec}_{e}\right]^{\halfnice}, \\
		\innerFTh{\avec}{\bvec} &:= \Xsum_{e \in \bftmesh} \inner{\avec}{\bvec}_{e} \quad \normFTh{\avec} := \left[\Xsum_{e \in \bftmesh} \inner{\avec}{\bvec}_{e} \right]^{\halfnice}.
	\end{align}
\end{subequations}

\subsection{Stabilized finite element formulations} \label{sec:stabilized-formulations}

\subsubsection{Galerkin formulation and its lack of stability} \label{sec:lack-of-stability}
A classical Galerkin FEM formulation of \eqref{eq:ns-original} suffers from two major issues:
\begin{itemize}
	\item The first issue is associated with the choice of the continuous function spaces themselves, i.e., each of the velocities belong to the same function space as the pressure. This is an obvious case of a non \textit{inf-sup} stable pair of function spaces. So, a stable numerical solution cannot be guaranteed \cite{babuvska1973finite, brezzi1974existence, john2016finite}.
	\item At the discrete level, a continuous Galerkin method is most commonly implemented using low order local Lagrangian basis functions (globally $ C^0 $) which generally have no directional properties. Differential operators such as the Laplacian can be approximated very well with such basis functions, so that the final linear algebra system is stable. But it is well known that the use of such functions to approximate differential operators possessing directional properties (e.g., first derivatives like $ \partialder{}{x} $) can lead to numerical instabilities if these derivatives become dominant in the equation \cite{brooks1982streamline, hughes1989new, codina1998comparison}. In the case of the Navier-Stokes equations, this might happen when the convection term $ (\uvec\cdot\grad)\uvec $ becomes much larger than the diffusion term $ -\nu\laplacian \uvec $ (or in other words, when $ \nu \ll 1 $). In addition to the spatial advection term $ (\uvec\cdot\grad)\uvec $, the time derivative term $ \timeder{\uvec} $ also possesses a directional property.
\end{itemize}

There are multiple ways to resolve these two issues. The issue of instabilities due to non inf-sup conformity can be resolved by using methods such as PSPG \cite{hughes1986new, douglas1989absolutely, tezduyar1992incompressible} that remove the saddle point nature of the resulting FEM problem. The second issue can be resolved by using some kind of ``upwinding''-type modification to the test function $ \mvec{w} $ (e.g., SUPG \cite{brooks1982streamline}) or by using a least squares  modifications to the FEM problem (e.g., GLS \cite{hughes1989new}) etc. In this work, we apply the variational multiscale method (VMS \cite{hughes1995multiscale}) which provides both kinds of stability.

\subsubsection{Multiscale decomposition, and function spaces}

The variational multiscale (VMS) method is a technique that naturally resolves both issues discussed above in a consistent manner \cite{bazilevs2007variational}. Following the VMS methodology of \cite{hughes1995multiscale, bazilevs2007variational}, we decompose the infinite dimensional velocity $ \uvec $ and pressure $ p $ as:
\begin{align}\label{eq:vms-coarse-fine-decomposition}
\uvec = \uc + \uf \quad \text{and} \quad p = \pc + \pf,
\end{align}
where the bar ( $ \bar{} $ ) denotes a \emph{coarse scale} that can be resolved by the numerical method (i.e., by the grid/mesh) and the prime $(') $ denotes the \emph{fine scales} or the \emph{sub-grid scales} that are not resolved by the numerical method.

Let us fix the mesh $ \mesh $, and suppose that $ \uvech $ and $ \ph $ are the discrete versions of $ \uc $ and $ \pc $ on $ \mesh $ respectively. The variational problem is then formulated for $ \uvech $ and $ \ph $ and the \textit{effect} of $ \uvec' $ and $ p' $ (i.e., the sub-grid scales) is expressed using the residuals of $ \uvech $ and $ \ph $ (i.e., the coarse scales). In the residul based VMS approach, the fine scales are modeled using the coarse-scale residuals (for a given $ \avec $) as
 \begin{subequations}
 	\label{eq:fine-scale-approximations}
 	\begin{align}
 	\uf &= -\taum^K (\opLTAdv[\avec]\uvech -\visco\laplacian\uvech + \grad\ph - \fvec) \ \text{in any element}\ K\in \mesh, \\
 	\pf &= -\tauc^K (\divergence\uvech) \ \text{in any element}\  K \in \mesh,
 	\end{align}
 \end{subequations}
 where  $ \taum $ and $ \tauc $ are some element-wise parameters.

\subsubsection{Stabilized variational form}
Now, consider the function spaces
\begin{subequations}\label{eq:fem-subspaces-modified}
	\begin{align}
		\spaceV &:= \left\{ \mvec{v} \in \mvec{H}^1(\stDom):\  \mvec{v} = \zerovec \text{ on }\bsp\cup\bit\right\}, \\
		\spaceVd &:= \left\{ \mvec{v} \in \mvec{H}^1(\stDom):\  \mvec{v} = \zerovec \text{ on }\bsp,\ \vvec = \uvec_0 \ \text{on} \ \bit \right\}, \\
		\spaceQ &:= {H}^1(\stDom), \\
		\spaceVh &:= \left\{ \vvech \in {\mvec{V}}:\  \vvech|_{e} \in \mvec{P_k}(e), \text{ } e \in \mesh \right\}, \\
		\spaceVhd &:= \left\{ \vvech \in {\spaceVd}:\  \vvech|_{e} \in \mvec{P_k}(e), \text{ } e \in \mesh \right\}, \\
		\spaceQh &:= \left\{ \qh \in {Q} :\ \qh|_{e} \in P_k(e), \text{ } e \in \mesh \right\}.
	\end{align}
\end{subequations}
Here $P_k(e),\ k \geq 1$, is the space of tensor-product polynomial functions in element $e$ where the tensor-product basis functions are constructed by polynomials of degree $k$ in each dimension. And $\mvec{P_k}(e)$ is the space of vector-valued functions in element $e$.

For describing the variational problem, we  will use the following symbols for test and trial function pairs:
\begin{subequations}
	\begin{align}
	\phi &:= (\uvec,p) \quad \text{(trial functions)}, \\
	\psi &:= (\vvec,q) \quad \text{(test functions)}.
	\end{align}
\end{subequations}

For a given $ \avec \in \spaceVd$, multiplying \eqref{eq:ns-original} by a vector test function $ \vvec \in \spaceV $ and performing integration-by-parts wherever applicable, we have the following bilinear and linear forms $ \bformUnstab(\avec;\upair,\vpair) $ and $ \lform(\avec; \vpair) $: 
\begin{subequations}\label{eq:var-problem-b-and-f}
	\begin{align}\label{eq:var-problem-b}
	\bformUnstab(\avec;\upair,\vpair) 
	&= - \inner{\uvec}{\opLTAdv[\avec]\vvec}+\visco\inner{\grad \uvec}{\grad \vvec}-\inner{p}{\grad\cdot \vvec} + \inner{\grad\cdot\uvec}{q} + \inner{\uvec}{\vvec}_{\bft}, \\
	\label{eq:var-problem-f}
	\lform(\vpair) &= \inner{\fvec}{\vvec}.
	\end{align}
\end{subequations}

Substituting $ (\uvec,p) = (\uvech,\ph) + (\uf,\pf) $ (or, $ \phi = \upairh + \upairf $ ) in \eqref{eq:var-problem-b}, we have
\begin{subequations}
	\label{eq:bform-vms-decomp}
	\begin{align}
		\bformUnstab(\avec;\upair,\vpair) &= \bformUnstab(\avec;\upairh,\vpairh) - \innerh{\uf}{\opLTAdv[\avec]\vvech} - \innerh{\pf}{\divergence \vvech} - \innerh{\uf}{\grad \qh} + \innerFTh{\uf}{\vvech} \\
		&= \bformUnstab(\avec;\upairh,\vpairh) - \innerh{\uf}{[\opLTAdv[\avec]\vvech+\grad \qh]} - \innerh{\pf}{\divergence \vvech} + \innerFTh{\uf}{\vvech} .
	\end{align}
\end{subequations}
Now, substituting the expressions for $ (\uf,\pf) $  \eqref{eq:fine-scale-approximations}, and also the expression for $ \bformUnstab(\avec;\upair,\vpair) $  \eqref{eq:var-problem-b-and-f} in \eqref{eq:bform-vms-decomp}, we can rearrange the equation as
\begin{align}
\bformh(\avec; \upairh, \vpairh) = \lformh(\avec; \vpairh),
\end{align}
where
\begin{subequations} \label{eq:fem-problem-vms-bh-lh}
\begin{align}
\label{eq:fem-problem-vms-bh}
\begin{split}
\bformh(\avec; \upairh,\vpairh) &= - \inner{\uvech}{\opLTAdv[\avec]\vvech}+\visco\inner{\grad \uvech}{\grad \vvech}-\inner{\ph}{\grad\cdot \vvech} + \inner{\grad\cdot\uvech}{\qh} + \inner{\uvech}{\vvech}_{\bft} \\ &\quad + \innerh{\taum [\opLTAdv[\avec]\uvech-\visco\laplacian\uvech + \grad \ph ]}{[\opLTAdv[\avec]\vvech+\visco\laplacian\vvech +\grad \qh]} \\
&\quad + \innerh{\tauc \divergence \uvech}{\divergence \vvech} -\innerFTh{\taum [\opLTAdv[\avec]\uvech-\visco\laplacian\uvech + \grad \ph ]}{\vvech},
\end{split}
\\
\label{eq:fem-problem-vms-lh}
\lformh(\avec; \vpairh) &= \inner{\fvec}{\vvech} + \innerh{\taum \fvec}{[\opLTAdv[\avec]\vvech-\visco\laplacian\vvech +\grad \qh]} - \innerFTh{\taum \fvec}{\vvech}.
\end{align}
\end{subequations}

Then the stabilized finite element method is to find $ \upairh = (\uvech,\ph) \in \spaceVhd\times\spaceQh $, such that
\begin{align}\label{eq:fem-problem-vms}
\begin{split}
\bformh(\uvech; \upairh, \vpairh) &= \lformh(\upairh;\vpairh) \quad \forall \vpairh \in \spaceVh \times \spaceQh.
\end{split}
\end{align}

\begin{remark}
	Note that in \eqref{eq:fem-problem-vms}, a choice of basis functions $ \vpairh = (\vvech, 0) $ will recover the (weak) momentum equations, whereas a choice $ \vpairh = (\zerovec, \qh) $ will recover the (weak) continuity equation.
\end{remark}
\begin{remark}
	Note that for a given $ \avec \in \spaceVhd $, the inner product operators defined in \eqref{eq:fem-problem-vms-bh-lh} are linear, i.e., $ \bformh(\avec;\upairh,\vpairh) $ is linear in both $ \upairh $ and $ \vpairh $; and similarly $ \lform(\avec; \vpairh) $ is linear in $ \vpairh $. However, \eqref{eq:fem-problem-vms} is \textit{nonlinear} since the unknown $ \uvech $ appears in place of $ \avec $.
\end{remark}

\begin{remark}[\textbf{Differences with respect to method of lines discretizations}]
	In a \textit{method of lines} discretization, the time derivative $ \ut $ is discretized using finite differences. Suppose the time interval $ \temporalInterval $ is discretized into $ N $ points $ \{t_i\},\ i=1,\ldots,N $. Using the following notation:
	\begin{align*}
	p_i &= p(\cdot,t_i), \\
	\uvec_i &= \uvec(\cdot,t_i), \\
	\partial_t \uvec_i &= \partial_t \uvec(\cdot,t_i), \\
	\inner{\avec}{\bvec}_i &= \inner{\avec}{\bvec}_{\spDom(t_i)},
	\end{align*}	
	and assuming $ \uvec_D= \zerovec $ in \eqref{eq:ns-original-dirichlet}, the Galerkin formulation of \eqref{eq:ns-original} at $ t= t_{i} $ would be to find $ \uvec_i \in H^1_0(\spDom(t_i)) $ such that
	\begin{align} \label{eq:var-prob-sequential}
	\inner{\ut_{i}}{\vvec}_{i} + \inner{\opLAdv[\uvec_{i}] \uvec_i}{\vvec}_{i} + \visco \inner{\grad \uvec_i}{\grad \vvec}_{i} -\inner{p_i}{\grad\cdot \vvec}_{i} + \inner{\grad\cdot\uvec_i}{q}_{i} = \inner{\fvec_i}{\vvec}_{i}
	\end{align}
	for all $ \vvec \in H^1_0(\spDom(t_i)) $. Comparing \eqref{eq:fem-problem-vms} with \eqref{eq:var-prob-sequential}, we note the following:
	\begin{enumerate}[(i)]
		\item \textit{Multiscale decomposition:} When the VMS method is applied to \eqref{eq:var-prob-sequential}, the following decomposition is assumed:
		\begin{align} \label{eq:vms-decomp-sequential}
		\uvec(\xvec,t_i) &= \uc(\xvec,t_i) + \uf(\xvec,t_i) \\
		\implies \uvec_i &= \uc_i + \uf_i.
		\end{align}
		Therefore, the VMS approximation in a sequential discretization is essentially a \textit{spatial approximation}. However, in the present formulation, the multiscale decomposition becomes a \textit{spatiotemporal} ansatz since we assume
		\begin{align} \label{eq:vms-decomp-st}
		\uvec(\xvec,t) &= \uc(\xvec,t) + \uf(\xvec,t),
		\end{align}
		
		\item All the inner products appearing in \eqref{eq:var-problem-b-and-f} are over the space-time $ \stDom $, whereas those appearing in \eqref{eq:var-prob-sequential} are over the spatial domain $ \spDom(t_i) $.
		
		\item \textit{The time derivative term:} When the multiscale decomposition \eqref{eq:vms-decomp-sequential} is applied to \eqref{eq:var-prob-sequential}, the time derivative term $ \inner{\ut_i}{\vvec} $ yields
		\begin{align*}
		\inner{\ut_i}{\vvec}_i = \inner{\partial_t \uc_i}{\vvec}_i + \inner{\partial_t \uf_i}{\vvec}_i.
		\end{align*}		
		In \cite{bazilevs2007variational}, it is assumed that
		\begin{subequations}
			\begin{align}
			\inner{\partial_t \uf_i}{\vvec}_i &= 0 \\
			\text{or}\quad \inner{\ut_i}{\vvec}_i &= \inner{\partial_t \uc_i}{\vvec}_i
			\end{align}
		\end{subequations}
		for all $ i $. However, in the current formulation, we \textit{do not} make such an assumption. Noting $ \uvec' (\cdot,t=0) = \zerovec $, we have
		\begin{align*}
		\inner{\ut'}{\vvec} = -\inner{\uvec'}{\vt} + \innerFT{\uvec'}{\vvec}.
		\end{align*}				
		
		\item \textit{The viscous term:} Similarly, the viscous term from \eqref{eq:var-prob-sequential} gives
		\begin{align}
		\inner{\grad \uvec_i}{\grad \vvec}_{i} = \inner{\grad \uc_i}{\grad \vvec}_{i} + \inner{\grad \uf_i}{\grad \vvec}_{i}.
		\end{align}
		In \cite{bazilevs2007variational}, the following projection is assumed on each time slab $ i $:
		\begin{subequations}
			\begin{align} \label{eq:vms-projection-1}
			\inner{\grad\uvec'_i}{\grad \vvec}_{i} &= 0 \\
			\label{eq:vms-projection-2}
			\text{or}\quad\inner{\grad \uvec_i}{\grad \vvec}_{i} &= \inner{\grad \uc_i}{\grad \vvec}_{i}.
			\end{align}
		\end{subequations} 
		In this paper, we \textit{extend this assumption} over the whole spatiotemporal domain $ \stDom $ as
		\begin{align} \label{eq:vms-projection-spacetime}
		\inner{\grad \uvec}{\grad \vvec} = \inner{\grad \uc}{\grad \vvec}.
		\end{align}
		This implies
		\begin{align*}
		0 = \inner{\grad\uvec'}{\grad \vvec} = \intTime \inner{\grad \uf}{\grad \vvec}_{\spDom(t)}\ dt,
		\end{align*}
		or $ \inner{\grad \uf}{\grad \vvec}_{\spDom(t)} = 0 $ almost everywhere in $\temporalInterval$. We have made use of assumption \eqref{eq:vms-projection-spacetime} in deriving the expression for $ B^h $ in \eqref{eq:fem-problem-vms-bh}.
	\end{enumerate}
\end{remark}
\begin{remark}
	The issue of numerical instability due to the time derivative does not become prominent in a conventional time-marching formulation, because the time derivative $ \ut $ is  approximated using either finite differences or time-discontinuous basis functions, but \textit{not} using continuous basis functions.	For example, using a BDF-$ r $ scheme, the time derivative at time $ t_{i} $ is approximated as:
	\begin{align}\label{eq:time-disc-bdf-r}
	\ut_i = \oneOver{\dt} \Xsum_{j=0}^{r} \beta_{j}\uvec_{i-j},
	\end{align}
	where $ \beta_j $ are constants. In the weak formulation, we obtain the following inner product:
	\begin{align}
	\inner{\ut_i}{\vvec}_{\spDom} = \oneOver{\dt} \inner{\left[\Xsum_{j=0}^{r} \beta_{j}\uvec_{i-j}\right]}{\vvec}_{i}.
	\end{align}
	The term $ \inner{\nicefrac{\uvec_{i}}{\dt}} {\vvec}_{\spDom} $ effectively acts a linear ``mass'' term, and adds to the stability of the linear algebra problem.
\end{remark}
\begin{remark}
	The stabilization parameters $ \taum $ and $ \tauc $ are defined element-wise and are intimately related to the transformation Jacobian between element and the master element \cite{hughes1986newBeyondSUPG, shakib1991new, bazilevs2007variational}. Suppose the size of element $ K \in \mesh $ is $ h $. Given a convection field $ \avec $ in element $ K $, we can define the \textit{spatiotemporal convection} as
	\begin{align} \label{eq:augmented-advection}
		\widetilde{\avec} := (\avec, 1),
	\end{align}
	where the ``1'' denotes the coefficient of $ \ut $ in \eqref{eq:ns-original-momentum} (convection in time). Then $ \taum $ and $ \tauc $ are calculated as \cite{codina2001stabilized}
	\begin{subequations} \label{eq:taum-tauc-codina}
		\begin{align}
			\taum &= \left[ \frac{c_1\cinv^2\visco}{h^2} + \frac{c_2 |\widetilde{\avec}|_{\infty}}{h} \right]^{-1}, \\
			\tauc &= c_3 \cinv^2\visco + c_4 |\widetilde{\avec}|_{\infty} h,
		\end{align}
	\end{subequations}
	where $ c_i > 0,\ i=1,\ 2,\ 3,\ 4 $, and $ C_I $ is a constant value for which the inverse Poincare inequality holds for each element in the mesh \cite{ciarlet2002finite, brenner2007mathematical}.
\end{remark}

\section{Analysis of the variational problem} \label{sec:analysis}
\subsection{Overview} \label{sec:anlys-overview}
In this section, we present an analysis of the boundedness, stability and convergence of the FEM problem \eqref{eq:fem-problem-vms}. For this, we will use the following linearized form of \eqref{eq:fem-problem-vms} where the convection field $ \avec \in \spaceVhd$ is known \textit{a priori} (recall $ \upairh=(\uvech,\ph)\in \spaceVhd\times \spaceQh $):
\begin{align}\label{eq:fem-problem-lin-vms}
\begin{split}
\bformh(\avec; \upairh, \vpairh) &= \lformh(\upairh;\vpairh) \quad \forall \vpairh=(\vvech,\qh) \in \spaceVh \times \spaceQh,
\end{split}
\end{align}
where $ \bformh $ and $ \lformh $ are defined in \eqref{eq:fem-problem-vms-bh-lh}. The analysis in this section closely follows that presented in \cite{zhou1993least}.

Some preliminaries are summarized in \secref{sec:anlys-prelim}, and the main results are presented in \secref{sec:anlys-results}. A brief overview of the results is as follows:
\begin{enumerate}[\itshape(a)]
	\item We first prove that for a given convection field $ \avec $, the term $ \bformh $ appearing in \eqref{eq:fem-problem-lin-vms} is coercive with respect to the solution $ \uvech $ (\lemref{thm:coercivity}). This also implies that for each given $ \avec $, \eqref{eq:fem-problem-lin-vms} yields a unique solution (\cororef{thm:uniqueness-linearized}).
	\item Next we prove that \eqref{eq:fem-problem-lin-vms} defines a continuous map $ F:\avec\rightarrow \upairh $ (\lemref{thm:map-continuity}). By using Brouwer's fixed point theorem, we then prove that there exists at least one fixed point of \eqref{eq:fem-problem-vms}, such that $ \upairh = F(\upairh) $ (\lemref{thm:existence-nonlinear}).
	\item We then prove that the nonlinear solution, i.e., the fixed point solution of \eqref{eq:fem-problem-vms} is unique (\thmref{thm:uniqueness}).
	\item \label{blah} Finally, we determine the order of convergence of the FEM problem \eqref{eq:fem-problem-vms} in terms of the mesh size $ h $ (\thmref{thm:convergence}).
\end{enumerate}

\subsection{Preliminaries} \label{sec:anlys-prelim}
Before stating the lemmas and theorems, we note a few preliminaries:
\begin{enumerate}[\itshape(I)]
	\item \label{item:discrete-norm} \textit{Discrete norm:} We choose the following discrete norm to prove stability and convergence of the FEM problem \eqref{eq:fem-problem-vms}:
	\begin{align}
	\normVW{\upairh} := \left[ \normFT{\uvech}^2 + \visco\norm{\grad \uvech}^2 + \normh{\taum^{\halfnice}[\opLTAdv[\avec] \uvech + \grad \ph]}^2\right]^{\halfnice}.
	\end{align}
	Note that this norm is dependent on the mesh, the convection field $ \avec $, as well as the choice of the stabilization parameter $ \taum $. The definition of $ \normh{\cdot} $ is given in \eqref{eq:discrete-norms}.

	\item \label{item:convection-op} \textit{The convection operator:} The operators $ \opLAdv[\avec] $ and $ \opLTAdv[\avec] $ defined in \eqref{eq:opL} and \eqref{eq:opLT} are linear in $ \uvec $. For two advection fields $ \avec^1 $ and $ \avec^2 $, we denote the difference by $ \davec = \avec^1-\avec^2 $. Then we have,
	\begin{align*}
	\opLTAdv[\avec^1]\uvec - \opLTAdv[\avec^2]\uvec &= (\ut + \opLAdv[\avec^1]\uvec) - (\ut + \opLAdv[\avec^2]\uvec) \\
	&= \opLAdv[\avec^1]\uvec - \opLAdv[\avec^2]\uvec \\
	&= (\ddiv{\avec^1}) \uvec + \halfnice (\divergence \avec^1) \uvec - (\ddiv{\avec^2}) \uvec - \halfnice (\divergence \avec^2) \uvec \\
	&= (\ddiv{\delta \avec^{12}}) \uvec + \halfnice (\divergence \delta \avec^{12}) \uvec \\
	&= \opLAdv[\delta \avec^{12}],
	\end{align*}
	i.e.,
	\begin{align}
	\opLTAdv[\delta \avec^{12}] =\opLTAdv[\avec^1]\uvec - \opLTAdv[\avec^2]\uvec = \opLAdv[\avec^1]\uvec - \opLAdv[\avec^2]\uvec = \opLAdv[\delta \avec^{12}].
	\end{align}

	\item \label{item:adjoint-op} \textit{Adjoint operators:} The adjoint operators of $ \opLAdv[\avec] $ and $ \opLTAdv[\avec] $ are as follows:
	\begin{subequations}
		\begin{align}
		\opLAdvStar[\avec] &= -\opLAdv[\avec], \\
		\opLTAdvStar[\avec] &= -\opLTAdv[\avec].
		\end{align}
	\end{subequations}
    Indeed, for any $ \eta \in C_c^{\infty}(\stDom) $ (i.e., the space of smooth functions with compact support), we have
	\begin{align*}
	\inner{\opLAdv[\adv]\uvec}{\eta} &= \inner{\left[(\ddiv{\avec}) \uvec + \halfnice (\divergence \avec) \uvec\right]}{\eta} = \inner{\left[(\ddiv{\avec}) \uvec + (\divergence \avec) \uvec\right]}{\eta} - \inner{\halfnice (\divergence \avec)\uvec}{\eta} \\
	&= \inner{\divergence (\uvec\avec^T)}{\eta} - \inner{\halfnice (\divergence \avec)\uvec}{\eta} \\
	&= -\inner{\uvec\avec^T}{\grad\eta} - \inner{\uvec}{\halfnice (\divergence \avec)\eta} \quad \text{[integration by parts on the first term]} \\
	&= -\inner{\uvec}{\left[(\ddiv{\avec}) \vvec + \halfnice (\divergence \avec) \vvec\right]} \\
	&=-\inner{\uvec}{\opLAdv[\avec]\eta}.
	\end{align*}
	And similarly,
	\begin{align}
		\inner{\opLTAdv[\adv]\uvec}{\eta} = \inner{\ut}{\eta} + \inner{\opLAdv[\avec]}{\eta} = -\inner{\uvec}{\partial_t \eta} - \inner{\uvec}{\opLAdv[\avec]\eta} = -\inner{\uvec}{\opLTAdv[\avec]\eta}.
	\end{align}
		
	\item \label{item:tau-max} \textit{Maximum value of} $ \taum $: For a given discretization $ \mesh $, we denote the maximum value of $ \taum $ as
	\begin{align}
	\taumMax = \max_{K \in \mesh} \taum^K.
	\end{align}
	
	\item \label{item:inverse-est} \textit{Inverse estimates:} For any function $ \uvech \in \spaceVh  $, the following inverse estimates hold:
	\begin{subequations}
		\begin{align}
		\normElm{\grad\uvech} &\leq \frac{\cinv}{\he}\normElm{\uvech}, \\
		\normElm{\laplacian\uvech} &\leq \frac{\cinv}{\he}\normElm{\grad\uvech}, \\
		{\norm{\uvech}}_{L^{\infty}(e)} &\leq \frac{\cinv}{(\he)^{\nsd / 2}}\normElm{\uvech},
		\end{align}
	\end{subequations}
	for every $ e \in \mesh $, where $ \he $ is the size of element $ e $, and $ \cinv $ is a constant. Details can be found in Section 3.2 of \cite{ciarlet2002finite}.
	
	\item \label{item:trace-ineq} \textit{Trace inequality:} The norm of a function $ \uvec $ on the boundary can be bounded by the norm inside the domain, i.e.,
	\begin{align}
	\norm{\uvec}_{\partial \stDom} \leq \traceConstant \norm{\uvec}_{\stDom} = \traceConstant \norm{\uvec}.
	\end{align}
	A proof can be found in Section 5.5 of \cite{evans2022partial}.
	
	\item \label{item:shape-regularity} \textit{Shape regularity:} We assume that the mesh $\mesh$ is \emph{shape regular}, i.e., there exists a constant $\shapeRegularConst$ such that
	\begin{align} \label{eq:shape-regularity-condition}
		\frac{\he}{\rho_e} \leq \shapeRegularConst \ \forall e \in \mesh,
	\end{align}
	where $\he$ is the diameter of the element $e$, and $ \rho_e $ is the supremum of the diameters of all the spheres contained in the element $e$.
	
	\item \label{item:interp-sobolev} \textit{Interpolation in a Sobolev space:} Assume $\mesh$ is a given mesh where \eqref{eq:shape-regularity-condition} holds. Let $ k,\ l,\ s \in \mathbb{N} $.And let $\interpolant$ be the interpolation operator from $ \spaceV \cap [H^s(\stDom)]^{\nsd} $ to $\spaceVh$, with order of interpolation $k$. If $k, l, s $ satisfy $0\leq l \leq (k+1) \leq s $, then
	\begin{align} \label{eq:interpolation-sobolev}
	\normH[l]{\vvec-\interp{\vvec}}{\stDom}\leq \approxConstant h^{(k+1-l)}\normH[k+1]{\vvec}{\stDom},
	\end{align}
	where the constant $\approxConstant$ only depends on $k,l,m$ and is independent of $h$ and $v$ (see
	\cite{brenner2007mathematical} and Section 6.7 of \cite{jt1976introduction} for details). The following estimates are direct consequences of \eqref{eq:interpolation-sobolev}.
	\begin{subequations}\label{estimate:sobolev-interp}
		\begin{align}
		\normElm{\vvec-\interp{\vvec}} &\leq {\approxConstant}_0 h^{k+1}\normH[k+1]{\vvec}{e}\ \forall e \in \mesh, \label{estimate:sobolev-interp-L2}\\
		\normElm{\grad (\vvec-\interp{\vvec})} &\leq {\approxConstant}_1 h^{k}\normH[k+1]{\vvec}{e}\ \forall e \in \mesh, \label{estimate:sobolev-interp-grad-L2}\\
		\normElm{\partial_t(\vvec-\interp{\vvec})} &\leq {\approxConstant}_1 h^{k}\normH[k+1]{\vvec}{e}\ \forall e \in \mesh, \label{estimate:sobolev-interp-time-der-L2} \\
		\normElm{\laplacian (\vvec-\interp{\vvec})} &\leq {\approxConstant}_2 h^{k-1}\normH[k+1]{\vvec}{e}\ \forall e \in \mesh. \label{estimate:sobolev-interp-laplacian-L2}
		\end{align}
	\end{subequations}
\end{enumerate}

\subsection{Analysis results} \label{sec:anlys-results}
\begin{assumption} \label{thm:assumptions-c1-c2}
	In \eqref{eq:taum-tauc-codina}, $ c_1 > 2,\ c_2 > 0 $.
\end{assumption}
\begin{assumption} \label{thm:assumptions-hmax}
	Given a mesh $ \mesh $, we assume that the maximum mesh size $ h_{max} < \epsilon c_2 |\avec|_{\infty} $ for some $ \epsilon > 0  $, where $ \epsilon < \frac{2}{\traceConstant} $, and $ \traceConstant $ is defined in \autoref{item:trace-ineq}.
\end{assumption}
\begin{lemma}[\textit{Coercivity}] \label{thm:coercivity}
	If \assumref{thm:assumptions-c1-c2} and \assumref{thm:assumptions-hmax} are satisfied, then the bilinear form $ \bformh $  in \eqref{eq:fem-problem-lin-vms} is coercive, i.e., there exists a positive constant $ \coer $ such that
	\begin{align}
	\bformh(\avec; \upairh, \upairh) \geq \coer \normVW[\avec]{\upairh}^2
	\end{align}
	for all $ \upairh \in \spaceVh\times\spaceQh $.
\end{lemma}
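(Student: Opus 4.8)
The plan is to test the bilinear form against itself, i.e.\ set $\vpairh = \upairh$ (so $\vvech = \uvech$ and $\qh = \ph$) in \eqref{eq:fem-problem-vms-bh} and show that each resulting term either reproduces a piece of the target norm $\normVW[\avec]{\upairh}^2$ or can be absorbed into such a piece. First I would dispatch the Galerkin part. The pressure--velocity coupling $-\inner{\ph}{\divergence\uvech} + \inner{\divergence\uvech}{\ph}$ cancels identically. For the convection term $-\inner{\uvech}{\opLTAdv[\avec]\uvech}$ I would split $\opLTAdv[\avec] = \partial_t + \opLAdv[\avec]$: the spatial part vanishes by the skew-symmetry established in \autoref{item:adjoint-op} (the Temam term is exactly what makes $\inner{\uvech}{\opLAdv[\avec]\uvech}=0$, the spatial boundary contribution being killed by $\uvech=\zerovec$ on $\bsp$), while integration by parts in time together with $\uvech=\zerovec$ on $\bit$ gives $\inner{\uvech}{\partial_t\uvech} = \half\normFT{\uvech}^2$. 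Combined with the Galerkin final-time term $\inner{\uvech}{\uvech}_{\bft} = \normFT{\uvech}^2$, this leaves $+\half\normFT{\uvech}^2$, and the viscous term contributes $\visco\norm{\grad\uvech}^2$ outright.

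Next I would treat the interior stabilization inner product. Because the trial side carries $-\visco\laplacian\uvech$ while the test side carries $+\visco\laplacian\uvech$, the two cross terms cancel and the term collapses to $\normh{\taum^{\halfnice}[\opLTAdv[\avec]\uvech + \grad\ph]}^2 - \visco^2\normh{\taum^{\halfnice}\laplacian\uvech}^2$. The first summand is precisely the stabilization contribution in $\normVW[\avec]{\upairh}^2$; the second is negative and must be absorbed into $\visco\norm{\grad\uvech}^2$. For this I would use the inverse estimate $\normElm{\laplacian\uvech}\le(\cinv/\he)\normElm{\grad\uvech}$ from \autoref{item:inverse-est} together with the bound $\taum \le h^2/(c_1\cinv^2\visco)$ read off from \eqref{eq:taum-tauc-codina}, which yields $\visco^2\normh{\taum^{\halfnice}\laplacian\uvech}^2 \le (\visco/c_1)\norm{\grad\uvech}^2$. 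Under \assumref{thm:assumptions-c1-c2} ($c_1>2$) the viscous term then dominates, and at least $\visco(1-1/c_1)\norm{\grad\uvech}^2 > \half\visco\norm{\grad\uvech}^2$ survives. The term $\normh{\tauc^{\halfnice}\divergence\uvech}^2 \ge 0$ is a harmless bonus.

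The hard part will be the final-time stabilization term $-\innerFTh{\taum[\opLTAdv[\avec]\uvech-\visco\laplacian\uvech+\grad\ph]}{\uvech}$, the only remaining potentially negative contribution and a genuine boundary term. My plan is to control it by Cauchy--Schwarz followed by Young's inequality, splitting $\taum = \taum^{\halfnice}\cdot\taum^{\halfnice}$ so one half pairs with the residual and the other with $\uvech$. The factor $\normFTh{\taum^{\halfnice}\uvech}^2 \le \taumMax\normFT{\uvech}^2$ is then made small via $\taumMax \le h_{max}/(c_2|\widetilde{\avec}|_\infty)$ from \eqref{eq:taum-tauc-codina} and \assumref{thm:assumptions-hmax}, which forces $\taumMax < \epsilon$; the residual factor is tied back to the volume stabilization norm $\normh{\taum^{\halfnice}[\opLTAdv[\avec]\uvech+\grad\ph]}^2$ through the trace inequality of \autoref{item:trace-ineq}, with the $-\visco\laplacian\uvech$ piece again handled by the inverse estimate as above. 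The delicate bookkeeping is that this term eats into the $\half\normFT{\uvech}^2$ produced in the first step, and the constraint $\epsilon < 2/\traceConstant$ in \assumref{thm:assumptions-hmax} is engineered precisely so that what it consumes is strictly less than $\half\normFT{\uvech}^2$, leaving a positive remainder.

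Collecting the survivors --- a positive multiple of $\normFT{\uvech}^2$, of $\visco\norm{\grad\uvech}^2$, and of $\normh{\taum^{\halfnice}[\opLTAdv[\avec]\uvech+\grad\ph]}^2$, plus the nonnegative $\tauc$ term --- I would conclude $\bformh(\avec;\upairh,\upairh)\ge\coer\normVW[\avec]{\upairh}^2$ with $\coer$ an explicit constant depending only on $c_1$, $\traceConstant$, and $\epsilon$. I expect the third paragraph (the boundary term) to be where all the hypotheses are actually spent; the first two steps are essentially algebraic identities plus one standard inverse-estimate absorption.
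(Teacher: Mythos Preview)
Your proposal is correct and follows essentially the same route as the paper: test with $\vpairh=\upairh$, use skew-symmetry of $\opLTAdv[\avec]$ to extract $\tfrac12\normFT{\uvech}^2+\visco\norm{\grad\uvech}^2$ from the Galerkin part, collapse the interior stabilization to $\normh{\taum^{\halfnice}[\opLTAdv[\avec]\uvech+\grad\ph]}^2-\visco^2\normh{\taum^{\halfnice}\laplacian\uvech}^2$ and absorb the Laplacian via the inverse estimate and the bound $\taum\le h^2/(c_1\cinv^2\visco)$, and finally control the $\bft$-stabilization term by Young plus trace with the smallness coming from \assumref{thm:assumptions-hmax}. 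The only cosmetic difference is that the paper splits the boundary residual into its $[\opLTAdv[\avec]\uvech+\grad\ph]$ and $-\visco\laplacian\uvech$ pieces and handles the latter by a spatial integration by parts on the slice $\bft$ (turning $\innerFTh{\laplacian\uvech}{\uvech}$ into $-\normFTh{\grad\uvech}^2$) before the inverse estimate, rather than bounding it directly; either works.
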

\begin{proof}
	Taking $ \vpairh = \upairh $ in \eqref{eq:fem-problem-lin-vms}, we have
	\begin{multline*}
	\bformh(\avec; \upairh, \upairh) = \halfnice \normFT{\uvech}^2 + \visco\norm{\grad \uvech}^2 + \normh{\taum^{\halfnice}[\opLTAdv[\avec]\uvech + \grad \ph ]}^2 -\normh{\taum^{\halfnice}\visco \laplacian \uvech}^2 \\
	+ \normh{\tauc^{\halfnice}\divergence\uvech}^2 -\innerFTh{\taum [\opLTAdv[\avec]\uvech + \grad \ph ]}{\uvech} + \innerFTh{\taum\visco\laplacian\uvech}{\uvech}.
	\end{multline*}
	The first three terms are positive. The fourth term can be estimated using an inverse inequality as
	\begin{align*}
	-\normh{\taum^{\halfnice}\visco \laplacian \uvech}^2 \geq -\frac{\visco^2 \cinv^2}{h^2} \normh{\taum^{\halfnice} \visco \grad \uvech}^2.
	\end{align*}
	Using Cauchy's inequality $ -2ab \geq -\oneOver{\epsilon}a^2 - \epsilon b^2 $ ($ \epsilon>0 $)  for any $ a $ and $ b $, the sixth term can be estimated as
	\begin{flalign*}
	-\innerFTh{\taum [\opLTAdv[\avec]\uvech + \grad \ph ]}{\uvech} &\geq - \frac{\epsilon}{2} \normFTh{\taum^{\halfnice}[\opLTAdv[\avec]\uvech + \grad \ph ]}^2 - \oneOver{2\epsilon}\normFTh{\taum^{\halfnice} \uvech}^2 \\
	&\geq - \frac{\epsilon \traceConstant}{2} \normh{\taum^{\halfnice}[\opLTAdv[\avec]\uvech + \grad \ph ]}^2 - \oneOver{2\epsilon}\normFTh{\taum^{\halfnice} \uvech}^2.
	\end{flalign*}
	And the seventh term as
	\begin{align*}
	\innerFTh{\taum\visco\laplacian\uvech}{\uvech} &\geq \taumMax\visco \innerFTh{\laplacian\uvech}{\uvech} \geq -\taumMax\visco \normFTh{\grad \uvech}^2 + 0 \geq -\frac{\taumMax\visco \cinv^2}{h^2} \normFTh{\uvech}^2.
	\end{align*}
	Combining,
	\begin{align*}
	\bformh(\avec; \upairh, \upairh)
	&\geq \half \normFT{\uvech}^2 + \visco\norm{\grad \uvech}^2 + \normh{\taum^{\halfnice}[\opLTAdv[\avec]\uvech + \grad \ph ]}^2 -\frac{\visco^2 \cinv^2}{h^2}\normh{\taum^{\halfnice}\visco \grad \uvech}^2 + \normh{\tauc^{\halfnice}\divergence\uvech}^2 \\
	&\hspace{8em} -\frac{\epsilon\traceConstant}{2}\normh{\taum^{\halfnice}[\opLTAdv[\avec]\uvech + \grad \ph ]}^2 - \oneOver{2\epsilon}\normFTh{\taum^{\halfnice}\uvech}^2 -\frac{\taumMax\visco \cinv^2}{h^2} \normFTh{\uvech}^2 \\
	&= \Xsum_{\elemVol\in\mesh}\left[ \left( 1 - \frac{\taum^e \visco \cinv^2}{h^2}\right) \visco \norm{\grad \uvech}_{\elemVol}^2 + \left(1-\frac{\epsilon\traceConstant}{2}\right) \norm{\taum^{\halfnice}[\opLTAdv[\avec]\uvech + \grad \ph ]}_{\elemVol}^2 + \norm{\tauc^{\halfnice}\divergence\uvech}_{\elemVol}^2 \right] \\
	&\quad +\Xsum_{\elemFT\in\bftmesh} \left[ \left\{ \half -\left(\oneOver{2\epsilon} +\frac{\visco\cinv^2}{h^2}\right) \taumMax \right\} \norm{\uvech}_{\elemFT}^2 \right].
	\end{align*}
	The first term in first sum is positive since (using \eqref{eq:taum-tauc-codina} and \assumref{thm:assumptions-c1-c2})
	\begin{align}
	\oneOver{\taum^e} = \frac{c_1\cinv^2\visco}{h^2} + \frac{c_2 |\widetilde{\avec}|_{\infty}}{h} > \frac{\visco \cinv^2}{h^2} \implies \left( 1 - \frac{\taum^e \visco \cinv^2}{h^2} \right) > 0.
	\end{align}
	The second term is positive by \assumref{thm:assumptions-hmax}. And the coefficient of the last term is
	\begin{align}
	\half \left\{ 1 - \left( \oneOver{\epsilon} + \frac{2\visco \cinv^2}{h^2} \right) \taumMax \right\} = \half \left\{ 1 - \frac{\oneOver{\epsilon} + \frac{2\visco \cinv^2}{h^2}}{\frac{c_2 |\widetilde{\avec}|_{\infty}}{h} + \frac{c_1\cinv^2\visco}{h^2}} \right\} = \half \frac{\left( \frac{c_2 |\widetilde{\avec}|_{\infty}}{h} - \oneOver{\epsilon} \right) + \frac{(c_1-2)\cinv^2\visco}{h^2}  }{\frac{c_2 |\widetilde{\avec}|_{\infty}}{h} + \frac{c_1\cinv^2\visco}{h^2}} >0
	\end{align}
	since $ c_1>2 $ and $ 1/\epsilon < c_2 |\avec|_{\infty} / h $ by \assumref{thm:assumptions-c1-c2} and \assumref{thm:assumptions-hmax} respectively. By choosing $ \coer $ as
	\begin{align*}
	\coer = \min \left\{\left( 1 - \frac{\taumMax \visco \cinv^2}{h^2}\right), \left(1-\frac{\epsilon\traceConstant}{2}\right), \left( \half -\left[\oneOver{2\epsilon} +\frac{\visco\cinv^2}{h^2}\right] \taumMax \right)\right\},
	\end{align*}
	we prove the estimate.
\end{proof}

\begin{corollary} \label{thm:uniqueness-linearized}
	Given $ \avec \in \spaceVh $, \eqref{eq:fem-problem-lin-vms} has a unique solution.
\end{corollary}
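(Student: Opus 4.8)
The plan is to read off both existence and uniqueness from the coercivity estimate of \lemref{thm:coercivity} combined with the finite dimensionality of the discrete spaces. Fix $\avec \in \spaceVh$. For this frozen $\avec$, the map $\upairh \mapsto \bformh(\avec; \upairh, \vpairh)$ is linear in $\upairh$ and $\lformh(\avec; \vpairh)$ is a fixed linear functional of $\vpairh$, so \eqref{eq:fem-problem-lin-vms} is a genuinely \emph{linear} problem. First I would remove the affine Dirichlet data by writing $\uvech = \uvech^{0} + \uvecTilde_h$, where $\uvech^{0} \in \spaceVhd$ is a fixed lifting of the values on $\bit$ and $\uvecTilde_h \in \spaceVh$; moving $\bformh(\avec;(\uvech^{0},\mathbf 0),\vpairh)$ to the right-hand side recasts the problem over the \emph{single} space $\spaceVh \times \spaceQh$ for both trial and test functions. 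Since that space is finite dimensional, the resulting system is square, so by the rank--nullity theorem existence for every right-hand side is equivalent to injectivity. It therefore suffices to prove uniqueness.

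For uniqueness I would use the standard difference argument. Suppose $\upairh^{1}$ and $\upairh^{2}$ both solve \eqref{eq:fem-problem-lin-vms} with the same $\avec$. Their difference $\upairh := \upairh^{1} - \upairh^{2}$ lies in $\spaceVh \times \spaceQh$ (both share the data on $\bit$ and vanish on $\bsp$, so the difference vanishes on $\bsp \cup \bit$) and satisfies the homogeneous equation $\bformh(\avec; \upairh, \vpairh) = 0$ for all $\vpairh$. Taking $\vpairh = \upairh$ and invoking \lemref{thm:coercivity} gives
\begin{align*}
0 = \bformh(\avec; \upairh, \upairh) \geq \coer \normVW[\avec]{\upairh}^2 \geq 0,
\end{align*}
so $\normVW[\avec]{\upairh} = 0$.

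The remaining task, and the only delicate point, is to unpack what the vanishing of this quantity implies. From $\visco \norm{\grad \uvech}^2 = 0$ I get $\grad \uvech = \zerovec$ a.e., i.e.\ the velocity difference is spatially constant; together with $\uvech = \zerovec$ on $\bsp$ this forces $\uvech = \zerovec$ throughout $\stDom$, giving uniqueness of the velocity. Substituting $\uvech = \zerovec$ into the stabilization term and using $\taum > 0$ then yields $\grad \ph = \zerovec$. The hard part will be that $\normVW[\avec]{\cdot}$ sees the pressure only through its \emph{spatial} gradient: $\bformh(\avec;\cdot,\cdot)$ is invariant under $\ph \mapsto \ph + g$ for any spatially constant mode $g$, since such a mode contributes $-\inner{g}{\divergence \vvech}$ and $\int_{\spDom} \divergence \vvech \, d\xvec = 0$ for $\vvech$ vanishing on $\bsp$. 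Thus the estimate proves uniqueness of $\uvech$ and of $\ph$ only up to such a null mode; to obtain a literally unique $\ph$ one pins the constant (for instance by grounding the pressure or imposing a zero-mean constraint), after which $\normVW[\avec]{\cdot}$ is a genuine norm on the constrained space and coercivity delivers existence and uniqueness outright. I expect the treatment of these pressure null modes to be the main issue requiring care; the rest is the routine finite-dimensional Lax--Milgram argument.
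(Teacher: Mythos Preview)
Your uniqueness argument---subtract two putative solutions, test with the difference, and invoke the coercivity of \lemref{thm:coercivity}---is exactly the paper's proof. You go further than the paper in two respects: you supply existence via the finite-dimensional rank--nullity reduction (the paper's proof addresses only uniqueness, despite the corollary claiming ``a unique solution''), and you correctly flag that $\normVW[\avec]{\cdot}$ is blind to spatially constant pressure modes, so that $\normVW[\avec]{\upairh^1-\upairh^2}=0$ by itself only yields $\uvech^1=\uvech^2$ and $\grad\ph^1=\grad\ph^2$; the paper simply writes ``which implies that we must have $\upairh^1=\upairh^2$'' without isolating this point or mentioning a pressure-fixing constraint.
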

\begin{proof}
	Suppose \eqref{eq:fem-problem-lin-vms} does not have a unique solution, and assume that given a particular $ \avec \in \spaceVh $, there are two solutions $ \upairh^1 $ and $ \upairh^2 $. Then we have
	\begin{subequations}
		\begin{align}
		\bformh(\avec; \upairh^1, \vpairh) &= \lformh(\avec;\vpairh) \quad \forall \vpairh \in \spaceVh \times \spaceQh \\
		\bformh(\avec; \upairh^2, \vpairh) &= \lformh(\avec;\vpairh) \quad \forall \vpairh \in \spaceVh \times \spaceQh.
		\end{align}
	\end{subequations}
	Subtracting, we have
	\begin{align*}
	\bformh(\avec; \upairh^1-\upairh^2, \vpairh) &= 0 \quad \forall \vpairh \in \spaceVh \times \spaceQh.
	\end{align*}
	Choosing $ \vpairh = \upairh^1-\upairh^2 $, we have
	\begin{align*}
	0 = \bformh(\avec; \upairh^1-\upairh^2, \upairh^1-\upairh^2) \geq \coer \normVW[\avec]{\upairh^1-\upairh^2}^2,
	\end{align*}
	which implies that we must have $ \upairh^1=\upairh^2 $. 
\end{proof}
\begin{lemma} \label{thm:map-continuity}
	The linearized stabilized equation \eqref{eq:fem-problem-lin-vms}
	determines a continuous map $ F: \avec \rightarrow \upairh = F(\avec)$.
\end{lemma}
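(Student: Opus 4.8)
The plan is to establish first that $F$ is well defined and then that it is (locally Lipschitz) continuous. Well-definedness is immediate: for each fixed $\avec$ the form $\bformh(\avec;\cdot,\cdot)$ is bilinear on the finite-dimensional space $\spaceVh\times\spaceQh$ and coercive by \lemref{thm:coercivity}, and a coercive bilinear form on a finite-dimensional space is injective, hence surjective, so a solution $\upairh=F(\avec)$ exists; its uniqueness is exactly \cororef{thm:uniqueness-linearized}. For continuity I would fix a target field $\avec^2$ and a nearby field $\avec^1$, set $\delta\avec:=\avec^1-\avec^2$ and $\delta\upairh:=F(\avec^1)-F(\avec^2)=\upairh^1-\upairh^2$, and aim for a bound of the form $\normVW[\avec^2]{\delta\upairh}\le C\,|\delta\avec|_{\infty}$ with $C$ independent of $\delta\avec$. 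Because the mesh is fixed and $\spaceVh\times\spaceQh$ is finite-dimensional, all norms are equivalent, so such an estimate yields continuity in any fixed topology.

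The key algebraic step is to apply coercivity at $\avec^2$ to $\delta\upairh$ and re-express the right-hand side through the two defining identities $\bformh(\avec^i;\upairh^i,\vpairh)=\lformh(\avec^i;\vpairh)$. Using linearity of $\bformh(\avec^2;\cdot,\cdot)$ in its trial slot, I would obtain
\begin{align*}
\coer\,\normVW[\avec^2]{\delta\upairh}^2 &\le \bformh(\avec^2;\delta\upairh,\delta\upairh) \\
&= \big[\lformh(\avec^1;\delta\upairh)-\lformh(\avec^2;\delta\upairh)\big] + \big[\bformh(\avec^2;\upairh^1,\delta\upairh)-\bformh(\avec^1;\upairh^1,\delta\upairh)\big],
\end{align*}
which isolates the entire $\avec$-dependence into two differences evaluated at the fixed arguments $\upairh^1$ and $\delta\upairh$; note that the $\avec$-independent term $\inner{\fvec}{\vvech}$ in $\lformh$ drops out of the first bracket.

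It then remains to estimate the two brackets, and this is where I expect the real work. Two sources of $\avec$-dependence appear. The first is the convection operator: by \autoref{item:convection-op}, $\opLTAdv[\avec^1]\wvech-\opLTAdv[\avec^2]\wvech=\opLAdv[{\delta\avec}]\wvech=(\delta\avec\cdot\grad)\wvech+\halfnice(\divergence\delta\avec)\wvech$, a first-order operator whose size is controlled by $|\delta\avec|_\infty\,\norm{\grad\wvech}+\halfnice|\divergence\delta\avec|_\infty\,\norm{\wvech}$, and the inverse estimates of \autoref{item:inverse-est} convert the gradient and Laplacian factors hitting discrete functions into $L^2$ norms at the cost of powers of $1/\he$. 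The second and more delicate source is the dependence of the stabilization parameters on $\avec$: since $\taum$ and $\tauc$ are given in closed form in \eqref{eq:taum-tauc-codina} as bounded, smooth functions of $|\widetilde{\avec}|_\infty$, and since $|\widetilde{\avec}^1|_\infty-|\widetilde{\avec}^2|_\infty$ is Lipschitz in $\avec$ with respect to $L^\infty$, one obtains elementwise bounds $|\taum^1-\taum^2|\le C\,|\delta\avec|_\infty$ and $|\tauc^1-\tauc^2|\le C\,|\delta\avec|_\infty$, with $C$ depending on $\he$ and $\visco$ but not on $\delta\avec$. Feeding these, together with an \apriori bound $\normVW[\avec^1]{\upairh^1}\le C\norm{\fvec}$ (a direct consequence of coercivity and the boundedness of $\lformh$ in $\normVW[\avec^1]{\cdot}$), into the two brackets bounds each term by $C\,|\delta\avec|_\infty\,\normVW[\avec^2]{\delta\upairh}$.

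Dividing by $\normVW[\avec^2]{\delta\upairh}$ then yields $\normVW[\avec^2]{\delta\upairh}\le C\,|\delta\avec|_\infty$, and hence the claimed continuity. The main obstacle, as flagged above, is the bookkeeping needed to keep $C$ uniform: I would restrict $\avec^1$ to a bounded neighbourhood of $\avec^2$ so that $\taumMax$, the coercivity constant $\coer$, and the norm-equivalence constants attached to $\normVW[\avec^2]{\cdot}$ all stay bounded away from their degenerate values, guaranteeing that $C$ does not blow up as $\avec^1\to\avec^2$.
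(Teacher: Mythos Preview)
Your approach is correct and follows the same template as the paper: apply coercivity to $\delta\upairh$, subtract the two defining equations, and bound the $\avec$-dependent residual terms arising from $\opLAdv[\delta\avec]$. The paper additionally extracts the a priori bound $\normVW[\avec]{F(\avec)}\le R$ to show $F:B_R\to B_R$ (which is what the Brouwer argument in \lemref{thm:existence-nonlinear} actually needs), silently treats $\taum,\tauc$ as $\avec$-independent rather than tracking their Lipschitz dependence as you do, and measures continuity in $\norm{\grad\delta\avec}$ via an inverse estimate plus Poincar\'e rather than directly in $|\delta\avec|_\infty$.
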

\begin{proof}
	Taking $ \vpairh = \upairh $ in \eqref{eq:fem-problem-lin-vms}, we have
	\begin{align} \label{eq:lemma-map-equality}
	\bformh(\avec; \upairh, \upairh) &= \lformh(\avec;\upairh).
	\end{align}
	From \lemref{thm:coercivity}, we can write $ \coer \normVW[\avec]{\upairh}^2 \leq \bformh(\avec; \upairh, \upairh)  $. And $ \lformh(\avec; \upairh) $ can be estimated as
	\begin{align*}
	\lformh(\avec; \upairh) &= \inner{\fvec}{\uvech} + \innerh{\taum \fvec}{[\opLTAdv[\avec]\uvech-\visco\laplacian\uvech +\grad \ph]} - \innerFTh{\taum \fvec}{\uvech} \\
	&\leq \norm{\fvec}\norm{\uvech} + \normh{\taum^{\halfnice}\fvec} \normh{\taum^{\halfnice}[\opLTAdv[\avec]\uvech -\visco\laplacian\uvech +\grad \ph]} + \normFTh{\taum^{\halfnice} \fvec} \normFTh{\taum^{\halfnice}\uvech} \\
	&\leq \norm{\oneOver{\visco} \fvec}\norm{\visco \grad \uvech} + \normh{\taum^{\halfnice}\fvec} \left(\normh{\taum^{\halfnice}[\opLTAdv[\avec]\uvech +\grad \ph]} + \visco \normh{\taum^{\halfnice} \laplacian\uvech} \right) + \normFTh{\taum^{\halfnice} \fvec} \normFTh{\taum^{\halfnice}\uvech} \\
	&\leq \left[\norm{\visco^{-1}\fvec}^2 + 2\normh{\taum^{\halfnice}\fvec}^2 + \normFTh{\taum^{\halfnice}\fvec}^2\right]^{\halfnice} \\
	&\qquad \times \left[\visco\norm{\grad \uvech}^2 + \normh{\taum^{\halfnice}[\opLTAdv[\avec]\uvech +\grad \ph]}^2 + \visco \normh{\taum^{\halfnice} \laplacian\uvech}^2 + \normFTh{\taum^{\halfnice}\uvech}^2 \right]^{\halfnice}
	\end{align*}
	or,
	\begin{align*}
	\lformh(\avec; \upairh) &\leq C_b \lambda(f) \normVW[\avec]{\uvech}
	\end{align*}	
	where
	\begin{subequations} \label{eq:cb-and-lambdaF}
		\begin{align}
		C_b &= \max \left\{\visco(1+\taumMax), \taumMax|_{\bft} \right\} \\
		\lambda(f) &= \left[\norm{\visco^{-1}\fvec}^2 + 2\normh{\taum^{\halfnice}\fvec}^2 + \normFTh{\taum^{\halfnice}\fvec}^2\right]^{\halfnice}.
		\end{align}
	\end{subequations}
	Then \eqref{eq:lemma-map-equality} gives
	\begin{align*}
	\coer \normVW[\avec]{\upairh}^2 \leq \bformh(\avec; \upairh, \upairh) &= \lformh(\avec;\upairh) \leq C_b \lambda(f) \normVW[\avec]{\upairh} \\
	\implies \normVW[\avec]{\upairh} &\leq R
	\end{align*}
	where
	\begin{align} \label{eq:def:R}
	R = \left(\frac{C_b}{\coer}\right) \lambda(f) = \left(\frac{C_b}{\coer}\right) \left[\norm{\visco^{-1}\fvec}^2 + 2\normh{\taum^{\halfnice}\fvec}^2 + \normFTh{\taum^{\halfnice}\fvec}^2\right]^{\halfnice}.
	\end{align}
	Now if we define the set 
	\begin{align} \label{eq:def:BR}
	B_R = \setbuilder{\upairh \in \spaceVh\times\spaceQh : \normVW[\avec]{\upairh} \leq R},
	\end{align}
	then it is easy to see that given any $ (\avec, \cdot)\in B_R $, $ F $ maps to $ \upairh $ in $ B_R $, i.e., $ F:B_R\rightarrow B_R $. 
 
 Next we prove that $ F $ is continuous.
	Let $ (\avec^i,\cdot) \in B_R \ (i=1,2) $ and that
	\begin{subequations}
		\label{eq:lemma-map-two-funcs}
		\begin{align}
		\bformh(\avec^1; \upairh^1, \vpairh) &= \lformh(\avec^1;\vpairh) \quad \forall \vpairh \in \spaceVh \times \spaceQh \\
		\bformh(\avec^2; \upairh^2, \vpairh) &= \lformh(\avec^2;\vpairh) \quad \forall \vpairh \in \spaceVh \times \spaceQh
		\end{align}
	\end{subequations}
	Then we also have $ \normVW[\avec^i]{\upairh^i} \leq R $ for $ i = 1,2 $. From \eqref{eq:lemma-map-two-funcs}
	\begin{align*}
	\bformh(\avec^1; \upairh^1, \vpairh) - \bformh(\avec^2; \upairh^2, \vpairh) &= \lformh(\avec^1, \vpairh) - \lformh(\avec^2, \vpairh) \\
	&= \innerh{\taum \fvec}{(\opLTAdv[\avec^1]\vvech - \opLTAdv[\avec^2]\vvech)} \\
	\implies \bformh(\avec^1; \upairh^1, \vpairh) &= \bformh(\avec^2; \upairh^2, \vpairh) + \innerh{\taum \fvec}{\opLTAdv[\delta \avec^{12}]\vvech}
	\end{align*}
	
	Now,
	\begin{align*}
	\bformh(\avec^1; \upairh^1-\upairh^2, \vpairh) &= \bformh(\avec^1; \uvech^1, \vvech) - \bformh(\avec^1; \uvech^2, \vvech) \\
	&= \bformh(\avec^2; \uvech^2, \vvech) - \bformh(\avec^1; \uvech^2, \vvech) + \innerh{\taum \fvec}{\opLTAdv[\delta \avec^{12}]\vvech} \\
	&= -\inner{\uvech^2}{(\opLTAdv[\avec^2]-\opLTAdv[\avec^1])\vvech} \\
	&\quad + \innerh{\taum [\opLTAdv[\avec^2]\uvech^2-\visco\laplacian\uvech^2 + \grad \ph^2 ]}{[\opLTAdv[\avec^2]\vvech-\visco\laplacian\vvech +\grad \qh]} \\
	&\quad- \innerh{\taum [\opLTAdv[\avec^1]\uvech^2-\visco\laplacian\uvech^2 + \grad \ph^2 ]}{[\opLTAdv[\avec^1]\vvech-\visco\laplacian\vvech +\grad \qh]} \\
	&\quad -\innerFTh{\taum (\opLTAdv[\avec^2]\uvech - \opLTAdv[\avec^1] \uvech)}{\vvech} \\
	&\quad + \innerh{\taum \fvec}{\opLTAdv[\delta \avec^{12}]\vvech} \\
	&= -\inner{\uvech^2}{\opLTAdv[\delta \avec^{21}]\vvech} \\
	&\quad + \innerh{\taum \opLTAdv[\delta \avec^{21}]\uvech^2}{[\opLTAdv[\avec^1]\vvech-\visco\laplacian\vvech +\grad \qh]} \\
	&\quad + \innerh{\taum ([\opLTAdv[\avec^2]\uvech^2-\visco\laplacian\uvech^2 +\grad \ph^2]}{\opLTAdv[\delta \avec^{21}] \vvech} \\
	&\quad -\innerFTh{\taum \opLTAdv[\delta \avec^{21}]\uvech^2}{\vvech} \\
	&\quad - \innerh{\taum \fvec}{\opLTAdv[\delta \avec^{21}]\vvech}
	\end{align*}
	
	\begin{align*}
	\bformh(\avec^1; \upairh^1-\upairh^2, \vpairh) &= \bformh(\avec^1; \uvech^1, \vvech) - \bformh(\avec^1; \uvech^2, \vvech) \\
	&= -\inner{\uvech^2}{\opLAdv[\delta \avec^{21}]\vvech} \\
	&\quad + \innerh{\taum \opLAdv[\delta \avec^{21}]\uvech^2}{[\opLTAdv[\avec^1]\vvech-\visco\laplacian\vvech +\grad \qh]} \\
	&\quad + \innerh{\taum ([\opLTAdv[\avec^2]\uvech^2-\visco\laplacian\uvech^2 +\grad \ph^2]}{\opLAdv[\delta \avec^{21}] \vvech} \\
	&\quad -\innerFTh{\taum \opLAdv[\delta \avec^{21}]\uvech^2}{\vvech} \\
	&\quad - \innerh{\taum \fvec}{\opLAdv[\delta \avec^{21}]\vvech} \\
	&=: S_1 + S_2 + S_3 + S_4 + S_5
	\end{align*}
	$ S_5 $	 can be bounded as
	\begin{align*}
	|S_5| \leq \norm{\taum f}_{L^2}\norm{\delta \avec^{21}}_{L^{\infty}}\norm{\grad \vvech}_{L^2}.
	\end{align*}
	Using the discrete inverse inequality and Poincare's inequality, we have
	\begin{align*}
	\norm{\davec}_{L^{\infty}} &\leq Ch^{-\xi} \norm{\davec}_{L^2} \leq Ch^{-\xi} \norm{\grad \davec}_{L^2}
	\end{align*}
	where $ \xi = -1,-\halfnice $ for $ n=2,3 $ respectively. Therefore
	\begin{align*}
	\seminorm{S_5} \leq \cinv h^{-\xi} \norm{\taum f}_{L^2}\norm{\grad \delta \avec^{21}}_{L^2}\norm{\grad \vvech}_{L^2}.
	\end{align*}
	For $ S_1 $ through $ S_4 $, we can similarly obtain
	\begin{align*}
	\seminorm{S_1} &\leq \beta R\norm{\grad \davec}_{L^2}\norm{\grad \vvech}_{L^2} \\
	\seminorm{S_2} &\leq \cinv h^{-\xi}\taumMax \norm{\grad \uvech^2}_{L^2} \norm{\grad \davec}_{L^2} \left( \normh{\taum^{\halfnice} [\opLTAdv[\avec^1]\vvech + \grad \qh]} + \visco \normh{\grad \vvech}  \right) \\
	&\leq \cinv h^{-\xi}\taumMax R \norm{\grad \davec}_{L^2} \left( \normh{\taum^{\halfnice} [\opLTAdv[\avec^1]\vvech + \grad \qh]} + \visco \normh{\grad \vvech}  \right) \\
	\seminorm{S_3} &\leq \cinv h^{-\xi}\taumMax \left( \normh{\taum^{\halfnice} [\opLTAdv[\avec^2]\uvech^2 + \grad \ph^2]} + \visco \normh{\grad \uvech^2} \right) \norm{\grad \davec}_{L^2} \norm{\grad \vvech}_{L^2} \\
	&\leq \cinv h^{-\xi}\taumMax R \norm{\grad \davec}_{L^2} \norm{\grad \vvech}_{L^2} \\
	\seminorm{S_4} &\leq \cinv h^{-\xi_b} \taumMax R \normFT{\grad \davec} \normFT{\grad \vvech} \leq \cinv^2 h^{-\xi_b-1} \taumMax R \norm{\grad \davec} \normFT{\vvech}
	\end{align*}
	where in case of $ S_4 $, we have used $ \normFT{\grad\davec} \leq C\norm{\grad\davec} $ and $ \normFT{\grad \vvech} \leq \frac{\cinv}{h}\normFT{\uvech}$. Thus, combining them, we have
	\begin{align*}
	\bformh(\avec^1; \upairh^1-\upairh^2, \vpairh) &\leq \Xsum_{i=1}^4 \seminorm{S_i} \\
	&\leq \norm{\grad \davec} \Bigg[ C_1(R,h,\taumMax) \normFT{\vvech} + C_2(R,h,\taumMax) \norm{\grad\vvech} + C_3(R,h,\taumMax) \normh{\taum^{\halfnice} [\opLTAdv[\avec^1]\vvech + \grad \qh]} \Bigg] \\
	&\leq \norm{\grad \davec} \left[C_1^2 + C_2^2 + C_3^2 \right]^{\halfnice} \left[\normFT{\vvech}^2 + \norm{\grad\vvech}^2 + \normh{\taum^{\halfnice} [\opLTAdv[\avec^1]\vvech + \grad \qh]}^2 \right]^{\halfnice} \\
	&=: \tilde{C} \normVW[\avec^1]{\vpairh} \norm{\grad \davec}
	\end{align*}
	where $ \tilde{C} = \left[C_1^2 + C_2^2 + C_3^2 \right]^{\halfnice} $. Now, if we choose $ \vpairh = \upairh^1 - \upairh^2 $, then
	\begin{align*}
	\bformh(\avec^1; \upairh^1-\upairh^2, \upairh^1-\upairh^2) &\leq \tilde{C}\normVW[\avec^1]{\upairh^1-\upairh^2} \norm{\grad \davec}
	\end{align*}
	By means of coercivity, we also have
	\begin{align*}
	\bformh(\avec^1; \upairh^1-\upairh^2, \upairh^1-\upairh^2) \geq \coer \normVW[\avec^1]{\upairh^1-\upairh^2}^2.
	\end{align*}
	So, we have
	\begin{align*}
	\coer \normVW[\avec^1]{\upairh^1-\upairh^2}^2 &\leq \tilde{C}\normVW[\avec^1]{\upairh^1-\upairh^2} \norm{\grad \davec} \\
	\implies \normVW[\avec^1]{\upairh^1-\upairh^2} &\leq \left(\frac{\tilde{C}}{\coer}\right) \norm{\grad(\avec^1-\avec^2)},
	\end{align*}
	which proves that $ \upairh $ continuously depends on $ \avec $, thus $ F $ is a continuous map.
\end{proof}

\begin{lemma}[\textit{Existence}] \label{thm:existence-nonlinear}
	If $ \fvec \in [L^2(\stDom )]^{\nsd} $, then the nonlinear problem \eqref{eq:fem-problem-vms} has at least one solution $ \upairh = (\uvech, \ph) $.
\end{lemma}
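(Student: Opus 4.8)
The plan is to recast the nonlinear problem \eqref{eq:fem-problem-vms} as a fixed-point equation for the solution operator of the linearized problem and then apply Brouwer's fixed-point theorem, using crucially that $\spaceVhd\times\spaceQh$ is finite-dimensional. Since a solution of \eqref{eq:fem-problem-vms} is a pair $(\uvech,\ph)$ whose velocity component, used as the convection field $\avec$, reproduces $(\uvech,\ph)$, I would first reduce everything to the velocity space: define $G:\spaceVhd\to\spaceVhd$ by $G(\avec)=\uvech$, the velocity part of the unique solution $F(\avec)=(\uvech,\ph)$ of the linearized problem guaranteed by \cororef{thm:uniqueness-linearized}. A solution of \eqref{eq:fem-problem-vms} then corresponds exactly to a fixed point $\avec^\ast=G(\avec^\ast)$.

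Next I would extract from the analysis already done a bound on $G(\avec)$ that is uniform in $\avec$ and stated in a \emph{fixed} norm. The a priori estimate inside \lemref{thm:map-continuity} gives $\normVW[\avec]{F(\avec)}\le R$ for every $\avec$, with $R$ depending only on $\fvec$ and the constants $C_b,\coer$, hence independent of $\avec$. Because the term $\visco\norm{\grad\uvech}^2$ in $\normVW[\avec]{\cdot}$ does not involve $\avec$, this yields $\norm{\grad(G(\avec))}\le R/\sqrt{\visco}=:\tilde R$ for all $\avec\in\spaceVhd$, a bound in the $\avec$-independent seminorm $\norm{\grad\,\cdot}$. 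In the homogeneous setting of this section (nonzero data are treated in \secref{sec:nonzero-icbc}), $\spaceVhd$ is the linear space $\spaceVh$, whose elements vanish on $\bsp\cup\bit$, so $\norm{\grad\,\cdot}$ is a genuine norm there by Poincaré.

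With this in hand I would set $\tilde B=\setbuilder{\vvech\in\spaceVhd:\norm{\grad\vvech}\le\tilde R}$, a closed ball about the origin in the finite-dimensional space $\spaceVhd$, hence nonempty, compact, and convex. By the previous step $G$ maps $\spaceVhd$, and in particular $\tilde B$, into $\tilde B$. Continuity of $G$ on $\tilde B$ follows directly from the estimate $\normVW[\avec^1]{\upairh^1-\upairh^2}\le(\tilde C/\coer)\norm{\grad(\avec^1-\avec^2)}$ proved in \lemref{thm:map-continuity}: extracting once more the viscous term gives $\norm{\grad(G(\avec^1)-G(\avec^2))}\le(\tilde C/(\coer\sqrt{\visco}))\norm{\grad(\avec^1-\avec^2)}$, so $G$ is Lipschitz in the fixed norm $\norm{\grad\,\cdot}$ (with $\tilde C$ a fixed constant on $\tilde B$, as it depends only on $R,h,\taumMax$). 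Brouwer's fixed-point theorem then produces $\avec^\ast\in\tilde B$ with $G(\avec^\ast)=\avec^\ast$; taking $\uvech=\avec^\ast$ and $\ph$ the associated pressure from $F(\avec^\ast)$ yields a pair $(\uvech,\ph)$ solving \eqref{eq:fem-problem-vms}.

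The main obstacle is the bookkeeping around the convection-dependent norm $\normVW[\avec]{\cdot}$: Brouwer requires one fixed compact convex set together with a self-map on it, whereas the estimates from \lemref{thm:coercivity} and \lemref{thm:map-continuity} are naturally expressed in the moving norm. The crux is therefore to convert both the uniform bound on $\normVW[\avec]{F(\avec)}$ and the continuity estimate into statements in a genuinely $\avec$-independent norm, which the $\avec$-free viscous term $\visco\norm{\grad\uvech}^2$ makes possible, and to verify that these estimates control the velocity self-map $G$ rather than only the pair-valued map $F$. Note that the Lipschitz constant of $G$ is not guaranteed to be below $1$, so a contraction argument is unavailable and Brouwer is essential; no uniqueness is claimed here, that being deferred to \thmref{thm:uniqueness}.
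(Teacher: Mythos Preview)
Your argument is correct and follows essentially the same strategy as the paper, namely applying Brouwer's fixed-point theorem to the solution map of the linearized problem established in \lemref{thm:map-continuity}. In fact you are more careful than the paper: where the paper simply asserts that $F:B_R\to B_R$ is continuous and invokes Brouwer, you correctly note that the ball $B_R$ in \eqref{eq:def:BR} is defined via the $\avec$-dependent norm $\normVW[\avec]{\cdot}$ and resolve this by passing to the fixed norm $\norm{\grad\,\cdot}$ on the velocity component, which tightens the argument without changing its substance.
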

\begin{proof}
	By \lemref{thm:map-continuity}, we know that the linear map $ \upairh = F(\avec) $ corresponding to \eqref{eq:fem-problem-lin-vms} is a continuous map from $ B_R $ to $ B_R $. Therefore by Brouwer's fixed point theorem, $ F $ must have at least one fixed point $ \upairh = F(\uvech) $. Therefore \eqref{eq:fem-problem-vms} has at least one solution.
\end{proof}

\begin{theorem}[\textit{Uniqueness}] \label{thm:uniqueness}
	The nonlinear problem \eqref{eq:fem-problem-vms} has a unique solution $ \upairh = (\uvech, \ph) $ that satisfies \begin{align}
	\normVW[\uvech]{\upairh} \leq R,
	\end{align}
	where $ R = \left(\nicefrac{C_b}{\coer}\right) \lambda(f) $; $ C_b,\ \lambda(f) $ are defined in \eqref{eq:cb-and-lambdaF}, and $ \coer $ is the constant of stability (defined in \lemref{thm:coercivity}).
\end{theorem}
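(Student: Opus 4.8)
The plan is to derive uniqueness directly from the Lipschitz-type estimate already established in \lemref{thm:map-continuity}, combined with the fact that the norm $\normVW[\avec]{\cdot}$ controls the velocity gradient. Existence of at least one fixed point in $B_R$ is guaranteed by \lemref{thm:existence-nonlinear}, and since every output of $F$ lies in $B_R$, the bound $\normVW[\uvech]{\upairh} \leq R$ is automatic for any fixed point; the only remaining task is to rule out a \emph{second} fixed point.

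First I would suppose that \eqref{eq:fem-problem-vms} admits two solutions $\upairh^1 = (\uvech^1, \ph^1)$ and $\upairh^2 = (\uvech^2, \ph^2)$, both fixed points in $B_R$, so that $\upairh^i = F(\uvech^i)$, i.e. the relevant advection fields are $\avec^i = \uvech^i$. Specializing the continuity estimate proved in \lemref{thm:map-continuity} to $\avec^1 = \uvech^1$ and $\avec^2 = \uvech^2$ gives
\begin{align*}
\normVW[\uvech^1]{\upairh^1 - \upairh^2} \leq \left(\frac{\tilde{C}}{\coer}\right)\norm{\grad(\uvech^1 - \uvech^2)}.
\end{align*}

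Next I would close the loop by bounding the right-hand side back in terms of the triple norm on the left. Because $\normVW[\avec]{\cdot}$ contains the term $\visco\norm{\grad\uvech}^2$, evaluating it on the difference $\upairh^1-\upairh^2$ yields $\sqrt{\visco}\,\norm{\grad(\uvech^1 - \uvech^2)} \leq \normVW[\uvech^1]{\upairh^1 - \upairh^2}$. Substituting this into the previous inequality produces
\begin{align*}
\normVW[\uvech^1]{\upairh^1 - \upairh^2} \leq \frac{\tilde{C}}{\coer\sqrt{\visco}}\,\normVW[\uvech^1]{\upairh^1 - \upairh^2}.
\end{align*}
Provided the prefactor $\tilde{C}/(\coer\sqrt{\visco})$ is strictly less than one, the only possibility is $\normVW[\uvech^1]{\upairh^1 - \upairh^2} = 0$, which forces $\uvech^1 = \uvech^2$ and, through the vanishing of the stabilization term in the norm, $\grad(\ph^1-\ph^2) = 0$; hence $\upairh^1 = \upairh^2$ (with the pressure determined up to the usual additive constant).

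The main obstacle is precisely securing the contraction condition $\tilde{C}/(\coer\sqrt{\visco}) < 1$. Since $\tilde{C} = [C_1^2 + C_2^2 + C_3^2]^{\halfnice}$ is assembled from constants that scale with the fixed-point radius $R$ (and with $h$, $\taumMax$, and the inverse-estimate constants), this inequality is not automatic: it amounts to a smallness-of-data (equivalently, largeness-of-viscosity) restriction, with $R$ itself controlled by $\lambda(f)$ through \eqref{eq:def:R}. I would therefore expect the rigorous argument to require, implicitly or explicitly, that $\lambda(f)$ be small enough relative to the mesh-dependent constants so that the contraction factor stays below unity. Tracking how $\tilde{C}$ depends on $R$, and hence on the data, is the delicate bookkeeping step; everything else reduces to the already-proven coercivity (\lemref{thm:coercivity}) and the Lipschitz bound of \lemref{thm:map-continuity}.
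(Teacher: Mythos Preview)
Your approach is correct and takes a genuinely different route from the paper. The paper does \emph{not} recycle the Lipschitz estimate from \lemref{thm:map-continuity}; instead it re-expands $\bformh(\uvech^1;\eta_h,\eta_h)$ with $\eta_h=\upairh^1-\upairh^2$, splits it into five terms $T_1,\dots,T_5$ (the exact analogues of the $S_i$ in the continuity lemma but now with $\vpairh=\eta_h$), and bounds each so that coercivity yields an inequality of the form $\lambda_1\normFT{\wvech}^2+\lambda_2\normh{\taum^{1/2}[\opLTAdv[\uvech^1]\wvech+\grad\rh]}^2+\lambda_3\norm{\grad\wvech}^2\le 0$ with coefficients $\lambda_i(h)$ that are claimed positive for $h\le h_0$. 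Your argument is more economical: one line from the already-proven Lipschitz bound plus the observation that $\sqrt{\visco}\,\norm{\grad\cdot}$ sits inside $\normVW[\cdot]{\cdot}$. Both routes end at the same structural obstruction---a contraction factor that must be forced below $1$---and you identify it honestly. The paper phrases the restriction as ``$h$ small enough,'' but note that its $\lambda_3$ contains the $h$-independent piece $\coer-\beta R$, so it too implicitly needs data small relative to $\coer$; your $\tilde{C}/(\coer\sqrt{\visco})<1$ is the same condition in slightly coarser packaging. What the paper's direct re-estimation buys is explicit $h$-scaling of the remaining terms via $\taumMax$; what your shortcut buys is brevity and no duplicated bookkeeping.
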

\begin{proof}
	By \lemref{thm:existence-nonlinear},  \eqref{eq:fem-problem-vms} has at least one solution $ \upairh = (\uvech, \ph)$. So, we have
	\begin{align}
		\bformh(\uvech; \upairh, \vpairh) &= \lformh(\uvech;\vpairh) \quad \forall \vpairh \in \spaceVh \times \spaceQh.
	\end{align}
	Choosing $ \vpairh = \upairh $ in the above equation, and following a similar process as in the proof of \lemref{thm:map-continuity}, we find that
	\begin{align*}
		\coer \normVW[\uvech]{\upairh}^2 \leq \bformh(\uvech; \upairh, \upairh) &= \lformh(\uvech;\upairh) \leq C_b \lambda(f) \normVW[\uvech]{\upairh} \\
		\implies \normVW[\uvech]{\upairh} &\leq \left(\nicefrac{C_b}{\coer}\right) \lambda(f) = R
	\end{align*}
	To prove that $ \upairh $ is unique, let us assume that $ \upairh^1 = (\uvech^1,\ph^1) $ and $ \upairh^2  = (\uvech^2,\ph^2) $ are two different solutions of \eqref{eq:fem-problem-vms}, i.e.,
		\begin{subequations}
		\label{eq:lemma-uniqueness-two-funcs}
		\begin{align}
			\bformh(\uvech^1; \upairh^1, \vpairh) &= \lformh(\uvech^1;\vpairh) \quad \forall \vpairh \in \spaceVh \times \spaceQh \\
			\bformh(\uvech^2; \upairh^2, \vpairh) &= \lformh(\uvech^2;\vpairh) \quad \forall \vpairh \in \spaceVh \times \spaceQh.
		\end{align}
	\end{subequations}
	Suppose $ \eta_h = \upairh^1-\upairh^2 = (\uvech^1-\uvech^2, \ph^1-\ph^2) = (\wvech,\rh)$. Once again, following a similar process as in the proof of \lemref{thm:map-continuity}, we have
	\begin{align*}
		\bformh(\uvec^1; \eta_h, \eta_h) &= \bformh(\uvec^1; \upairh^1-\upairh^2, \eta_h) \\
		 &= \bformh(\uvec^1; \upairh^1, \wvech) - \bformh(\uvec^1; \upairh^2, \wvech) \\
		&= -\inner{\uvech^2}{\opLAdv[\wvech]\wvech} \\
		&\quad + \innerh{\taum \opLAdv[\wvech]\uvech^2}{[\opLTAdv[\uvec^1]\wvech-\visco\laplacian\wvech +\grad \rh]} \\
		&\quad + \innerh{\taum ([\opLTAdv[\uvec^2]\uvech^2-\visco\laplacian\uvech^2 +\grad \rh^2]}{\opLAdv[\wvech] \wvech} \\
		&\quad -\innerFTh{\taum \opLAdv[\wvech]\uvech^2}{\wvech} \\
		&\quad - \innerh{\taum \fvec}{\opLAdv[\wvech]\wvech} \\
		&=: T_1 + T_2 + T_3 + T_4 + T_5
	\end{align*}
	Using \lemref{thm:coercivity}, we have
	\begin{align}
	\coer \normVW[\uvec^1]{\eta_h}^2 \leq \bformh(\uvec^1; \eta_h, \eta_h) = T_1 + T_2 + T_3 + T_4 + T_5 \\
	\label{eq:lemma-uniqueness-e2}
	\implies \coer \left( \normFT{\wvech}^2 + \norm{\grad \wvech}^2 + \normh{\taum^{\halfnice}[\opLTAdv[\uvec^1] \wvech + \grad \rh]}^2\right) - \Xsum_{j=1}^5 T_j \leq 0.
	\end{align}
	Now, we estimate each $ T_j,\ j=1,\ldots,5 $ as follows.
	\begin{align*}
		\seminorm{T_1} &\leq \beta R \norm{\grad \wvech}^2, \\
		\seminorm{T_2} &\leq \cinv h^{-\xi} \taumMax R \left(\norm{\grad \wvech} \normh{\taum^{\halfnice}[\opLTAdv[\uvech^1]\wvech + \grad \rh]} + \visco\norm{\grad \wvech}^2\right) \\
		&\leq \cinv h^{-\xi} \taumMax R \left[\left(\visco + \half\right) \norm{\grad \wvech}^2 + \half \normh{\taum^{\halfnice}[\opLTAdv[\uvech^1]\wvech + \grad \rh]}^2 \right], \\
		\seminorm{T_3} &\leq \cinv h^{-\xi} \taumMax R \norm{\grad \wvech}^2, \\
		\seminorm{T_4} &\leq \cinv h^{-\xi_b} \taumMax R \normFT{\grad \wvech}^2 \\ 
		&\leq \cinv^2 h^{-\xi_b-1} \taumMax R \normFT{\wvech}^2, \\
		\seminorm{T_5} &\leq \cinv h^{-\xi} \taumMax \norm{\fvec} \norm{\grad\wvech}^2.
	\end{align*}
	Substituting each of these five estimates into \eqref{eq:lemma-uniqueness-e2}, and collating the similar terms, we have
	\begin{align} \label{eq:lemma-uniqueness-e3}
		\lambda_1 \normFT{\wvech}^2
		+\lambda_2 \normh{\taum^{\halfnice}\left[\opLTAdv[\uvech^1] \wvech + \grad \rh \right]}^2 + \lambda_3 \norm{\grad \wvech}^2 \leq 0,
	\end{align}
	where
	\begin{align*}
		\lambda_1(h) &= \coer - \cinv^2 h^{-\xi_b-1} \taumMax R, \\
		\lambda_2(h) &= \coer - \half \cinv^2 h^{-2\xi}  {\taumMax}^2 R^2, \\
		\lambda_3(h) &= \coer - \left[\beta R + \cinv h^{-\xi}\taumMax \left\{R \left( \nicefrac{3}{2} + \visco \right) + \norm{\fvec} \right\} + \half \cinv^2 h^{-2\xi} {\taumMax}^2 R^2 \right].
	\end{align*}
    There exists a $ \hzero $ such that for each $ h \leq \hzero $, we have $\lambda_i(h) \geq 0,\ i=1,2,3$. Then each of the norms appearing on the left hand side of \eqref{eq:lemma-uniqueness-e3} must amount to 0. Thus we must have $ \wvech = \mathbf{0} $, and also $ \rh = 0 $. Therefore, we have that  $ \upairh^1 = \upairh^2 $, proving the uniqueness of the solution.
\end{proof}

\begin{theorem}[\textit{Convergence}] \label{thm:convergence}
	Assume that the exact solution of \eqref{eq:fem-problem-vms} is $ \phi = (\uvec, p) \in (H^{k+1}(\stDom )\cap \spaceV) \times (H^{k+1}(\stDom )\cap\spaceQ),\ k \in \mathbb{N} $. Then for the discrete solution $ \upairh $, we have the following estimate:
	\begin{align}
	\normVW[\uvec]{\upairh - \upair} \leq C h^{k},
	\end{align}
	where $ C $ depends on $ \norm{\uvec}_{H^{k+1}} $ and $ \norm{p}_{H^{k+1}} $, and $ k $ is the order of the basis functions.
\end{theorem}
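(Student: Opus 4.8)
The plan is to follow the standard route for stabilized finite element convergence analysis: introduce the interpolant $\interp{\upair}$, split the total error, and exploit the coercivity of \lemref{thm:coercivity} together with the consistency of the residual-based formulation. I write $\eie{\upair} = \upair - \interp{\upair}$ for the interpolation error and $\theta_h = \upairh - \interp{\upair} \in \spaceVh\times\spaceQh$ for the discrete error, so that $\upairh - \upair = \theta_h - \eie{\upair}$. By the triangle inequality it then suffices to bound $\normVW[\uvec]{\theta_h}$, since the interpolation error $\normVW[\uvec]{\eie{\upair}}$ is controlled directly by the estimates in \eqref{estimate:sobolev-interp}. The norm $\normVW[\uvec]{\cdot}$ is dominated by its viscous contribution $\visco\norm{\grad(\cdot)}^2$, which interpolates at $O(h^{k})$ by \eqref{estimate:sobolev-interp-grad-L2}; the $\taum^{\halfnice}$-weighted residual terms contribute at the higher order $h^{k+\halfnice}$ because $\taum^{\halfnice}\sim h^{\halfnice}$ in the convection-dominated regime, and the $\visco\,\taum^{\halfnice}$-weighted Laplacian terms likewise reduce to $O(h^{k})$ once the viscous scaling of $\taum$ is inserted. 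This is what fixes the overall rate at $h^k$.

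The second ingredient is consistency. Because the fine-scale models \eqref{eq:fine-scale-approximations} are built from the strong residuals, a smooth exact solution $\upair$ makes the momentum residual $\opLTAdv[\uvec]\uvec - \visco\laplacian\uvec + \grad p - \fvec$ and the continuity residual $\divergence\uvec$ vanish pointwise. Substituting into \eqref{eq:fem-problem-vms-bh-lh} collapses the stabilization and $\tauc$ terms into exactly the forcing contributions already present in $\lformh$, giving the consistency identity $\bformh(\uvec; \upair, \vpairh) = \lformh(\uvec; \vpairh)$ for every $\vpairh \in \spaceVh\times\spaceQh$. Subtracting this from the discrete equation $\bformh(\uvech; \upairh, \vpairh) = \lformh(\uvech; \vpairh)$ yields a Galerkin-type orthogonality, except that the two equations are linearized about the \emph{different} advection fields $\uvec$ and $\uvech$; the mismatch is $\delta\avec = \uvec - \uvech = \eie{\uvec} - \theta_h^u$, where $\theta_h^u$ is the velocity component of $\theta_h$.

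With these in hand I would apply coercivity to $\theta_h$, write $\coer \normVW[\uvec]{\theta_h}^2 \leq \bformh(\uvec; \theta_h, \theta_h)$, and split $\theta_h = (\upairh - \upair) + \eie{\upair}$. The contribution $\bformh(\uvec; \eie{\upair}, \theta_h)$ is handled by a continuity bound on the bilinear form followed by the interpolation estimates, producing a factor $\normVW[\uvec]{\eie{\upair}} \le C h^{k}$ times $\normVW[\uvec]{\theta_h}$. The remaining contribution $\bformh(\uvec; \upairh - \upair, \theta_h)$ is rewritten through the perturbed orthogonality: replacing $\bformh(\uvec;\cdot,\cdot)$ by $\bformh(\uvech;\cdot,\cdot)$ (and likewise on the $\lformh$ side) generates difference terms governed by the operator identity $\opLTAdv[\avec^1] - \opLTAdv[\avec^2] = \opLAdv[\delta\avec]$ of \autoref{item:convection-op}. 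These are precisely the $S_1,\dots,S_5$-type terms already estimated in the proof of \lemref{thm:map-continuity}; each carries a factor $\norm{\grad\delta\avec} \le \norm{\grad(\eie{\uvec})} + \norm{\grad\theta_h^u}$. The $\eie{\uvec}$ part is $O(h^{k})$ by interpolation, while the $\theta_h^u$ part is quadratic in the discrete error and, using the inverse estimates of \autoref{item:inverse-est} and the boundedness $\normVW[\uvech]{\upairh} \le R$, can be absorbed into the left-hand side for $h$ below a threshold $\hzero$, exactly as in the uniqueness argument that produced \eqref{eq:lemma-uniqueness-e3}. Collecting the surviving terms gives $\normVW[\uvec]{\theta_h} \le C h^{k}$, and the triangle inequality completes the bound on $\normVW[\uvec]{\upairh - \upair}$.

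The main obstacle is the nonlinearity: the exact and discrete problems are linearized about different advection fields, so the clean Galerkin orthogonality of a linear problem is unavailable, and the difference terms must be tamed with the $\opLAdv[\delta\avec]$ machinery together with the inverse and trace inequalities. The quadratic-in-$\theta_h$ remainders can only be moved to the left-hand side once $h$ is small, which is why the estimate is asymptotic (valid for $h \le \hzero$) rather than uniform, mirroring \thmref{thm:uniqueness}. A secondary bookkeeping difficulty is verifying across the diffusion- and convection-dominated regimes of $\taum$ that every $\taum^{\halfnice}$- and $\visco^{\halfnice}$-weighted term indeed converges at least as fast as $h^{k}$, so that the viscous $H^1$ term alone sets the rate claimed in the statement.
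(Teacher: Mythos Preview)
Your overall strategy---split $\upairh-\upair = \theta_h - (\eie{\upair})$, invoke coercivity on $\theta_h$, reduce via residual consistency to interpolation-error terms, and finish with the triangle inequality---is exactly the route the paper takes, and your reading of which term fixes the rate at $h^k$ is correct. The point of departure is how the nonlinearity is handled.

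The paper linearizes everything about the \emph{exact} advection field $\uvec$ and asserts in one line that $\bformh(\uvec;\upairh,\hat{\eta}) = \bformh(\uvec;\upair,\hat{\eta})$, so that $\bformh(\uvec;\hat{\eta},\hat{\eta}) = \bformh(\uvec;\eie{\upair},\hat{\eta})$ immediately. It then expands the right-hand side into eight interpolation terms $S_1,\dots,S_8$, bounds each with \eqref{estimate:sobolev-interp} and the scaling $\taum^e\sim\he^2$, and never sees a $\delta\avec$ contribution at all. In effect the paper treats $\upairh$ as though it solved the \emph{linearized} problem \eqref{eq:fem-problem-lin-vms} with $\avec=\uvec$, leaving the discrepancy between $\bformh(\uvech;\cdot,\cdot)$ and $\bformh(\uvec;\cdot,\cdot)$ implicit. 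Your plan, by contrast, confronts that discrepancy explicitly: you subtract the two genuinely different equations, isolate the extra $\opLAdv[\delta\avec]$ terms via \autoref{item:convection-op}, and propose to absorb the resulting $\theta_h^u$-quadratic remainders for $h\le\hzero$ with the same inverse/trace machinery used in \lemref{thm:map-continuity} and \thmref{thm:uniqueness}. Your argument is therefore the more complete of the two, at the price of the additional $S_1$--$S_5$-style bookkeeping you anticipate; the paper's shortcut is cleaner but rests on an orthogonality identity that, strictly speaking, holds only for the Oseen problem and is not justified for the full nonlinear scheme.
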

\begin{proof}
	Suppose $ \interpolant^u: \spaceV \cap H^{k+1}(\stDom) \rightarrow \spaceVh  $ is the projection of $ \uvec $ from $\spaceV$ to $\spaceVh$; and similarly $ \interpolant^p: \spaceQ \cap H^{k+1}(\stDom ) \rightarrow \spaceQh $. We assume that both $ \interpolant^u $ and $ \interpolant^p $ have the degree of interpolation $ k \in \mathbb{N} $.
	
	Let $ \wvech = \uvech - \interpolant\uvech $ and $ \rh = \ph - \interpolant\ph $, and $ \hat{\eta} = (\wvech, \rh)$. Consequently $ \hat{\eta} = \upairh - \interpolant\upairh $. Using \eqref{eq:fem-problem-lin-vms},
	\begin{align*}
		\bformh(\uvec; \hat{\eta}, \hat{\eta}) &= \bformh(\uvec; \upairh - \interp{\upair}, \hat{\eta}) \\
		&= \bformh(\uvec; \upairh, \hat{\eta}) - \bformh(\uvec; \interp{\upair}, \hat{\eta}) \\
		&= \bformh(\uvec; \upair, \hat{\eta}) - \bformh(\uvec; \interp{\upair}, \hat{\eta}) \\
		&= \bformh(\uvec; \upair-\interp{\upair}, \hat{\eta}) \\
		&= -\inner{\duh}{\opLTAdv[\uvec] \wvech} + \visco \inner{\grad (\duh)}{\grad \wvech} - \inner{\dph}{\divergence\wvec} + \inner{\divergence (\duh)}{\rh} \\
		&\quad + \innerFT{\duh}{\wvech} + \innerh{\taum [\opLTAdv[\uvec] (\duh) -\visco \laplacian (\duh) + \grad (\duh)] }{[\opLTAdv[\uvec]-\visco\laplacian\wvech + \grad\rh]} \\
		&\quad \innerh{\tauc \divergence(\duh)}{\divergence \wvech} - \innerFTh{\taum [\opLTAdv[\uvec](\duh)-\visco\laplacian(\duh)+\grad (\dph)]}{\wvech} \\
		&= \Xsum_{i=1}^{8} S_i
	\end{align*}
	Denote $ \alpha := k+1 $, $ \eeu := \normH[k+1]{\uvec}{e} $ and $ \eep := \normH[k+1]{p}{e} $. Using the estimates in \eqref{estimate:sobolev-interp} we have
	\begin{subequations}
		\begin{align}
		\normElm{\duh} &\leq C\he^{\alpha}\eeu, \\
		\normElm{\grad \duh} &\leq C \he^{\alpha-1}\eeu, \\
		\normElm{\partial_t \duh} &\leq C \he^{\alpha-1}\eeu, \\
		\normElm{\laplacian \duh} &\leq C\he^{\alpha-2}\eeu, \\
		\normElm{\dph} &\leq C\he^{\alpha}\eep, \\
		\normElm{\grad \dph} &\leq C\he^{\alpha-1} \eep.
	\end{align}
	\end{subequations}
	Then,
	\begin{align*}
	S_1 = -\inner{\duh}{\opLTAdv[\uvec] \wvech} =\elmsum -\inner{\duh}{\opLTAdv[\uvec] \wvech}_e &= \elmsum \beta \normElm{\grad \duh}\normElm{\grad \uvec}\normElm{\grad \wvech} \\
	&\leq \elmsum \beta R C \he^{\alpha-1} \eeu \normElm{\grad\wvech}
	\end{align*}
	\begin{align*}
	S_2 = \visco \inner{\grad (\duh)}{\grad \wvech} = \elmsum \visco \inner{\grad (\duh)}{\grad \wvech}_e &\leq \elmsum \visco \normElm{\grad \duh}\normElm{\grad \wvech} \\
	&\leq \elmsum \visco C\he^{\alpha-1}\eeu \normElm{\grad \wvech}
	\end{align*}
	\begin{align*}
	S_3 =  - \inner{\dph}{\divergence\wvec} = \elmsum - \inner{\dph}{\divergence\wvec}_e \leq \elmsum \normElm{\dph}\normElm{\divergence\wvech} &\leq \elmsum \sqrt{\nsd}\normElm{\dph}\normElm{\grad \wvech} \\
	&\leq \elmsum \sqrt{\nsd}C\he^{\alpha}\eep\normElm{\grad \wvech}
	\end{align*}
	\begin{align*}
	S_4 &=  \inner{\divergence (\duh)}{\rh} = \inner{\divergence (\eie{\uvec})}{\rh} = \inner{\divergence \uvec - \divergence \interp{\uvec}}{\rh} = 0,
	\end{align*}
	since $ \divergence\uvec =0 $ and $ \divergence\interp{\uvec} = 0$.
	\begin{align*}
	S_5 &= \innerFT{\duh}{\wvech} = \elmsumFT {\inner{\duh}{\wvech}}_e \leq \elmsumFT \normElm{\duh} \normElm{\wvech} \leq \elmsumFT C\he^{\alpha}\eeu \norm{\wvech}_e 
	\end{align*}
	\begin{align*}
	S_6 &= \innerh{\taum [\opLTAdv[\uvec] (\duh) -\visco \laplacian (\duh) + \grad (\duh)] }{[\opLTAdv[\uvec]-\visco\laplacian\wvech + \grad\rh]} \\
	&= \elmsum \inner{\taum [\opLTAdv[\uvec] (\duh) -\visco \laplacian (\duh) + \grad (\duh)] }{[\opLTAdv[\uvec]-\visco\laplacian\wvech + \grad\rh]}_e \\
	&\leq \elmsum \taum^e \left(CR\he^{\alpha-1}\eeu + \visco C \he^{\alpha-2}\eeu + C\he^{\alpha}\eep \right) \left[\normElm{\opLTAdv[\uvec]\wvech + \grad\rh} + \frac{\visco \cinv}{\he} \normElm{\grad \wvech}\right]
	\end{align*}
	\begin{align*}
	S_7 = \innerh{\tauc \divergence(\duh)}{\divergence \wvech} &= \elmsum \inner{\tauc \divergence(\duh)}{\divergence \wvech}_e \leq \elmsum \tauc^e\normElm{\divergence\duh}\normElm{\divergence\wvech} \\
	&\leq \elmsum \tauc^e\sqrt{\nsd}\normElm{\grad\duh} \sqrt{\nsd}\normElm{\grad\wvech} \leq \elmsum \tauc^e \nsd C\he^{\alpha-1}\eeu \normElm{\grad \wvech} 
	\end{align*}
	\begin{align*}
	S_8 &= \innerFTh{\taum [\opLTAdv[\uvec](\duh)-\visco\laplacian(\duh)+\grad (\dph)]}{\wvech} \\
	&= \elmsumFT \inner{\taum [\opLTAdv[\uvec](\duh)-\visco\laplacian(\duh)+\grad (\dph)]}{\wvech}_e \\
	&\leq \elmsumFT \taum^e \left[ C\he^{\alpha-1}\eeuFT  + \visco C\he^{\alpha-2}\eeuFT + C\he^{\alpha}\eepFT \right]\normElm{\wvech}
	\end{align*}
	Combining all terms, we have
	\begin{align*}
	\Xsum_{i=1}^{8} S_i &= \elmsumFT \lambda_1^e \normElm{\wvech} + \elmsum \lambda_2^e \normElm{\grad \wvech} + \elmsum \lambda_3^e \normElm{\opLTAdv[\uvec]\wvech + \grad\rh} \\
	&\leq \left[ \elmsumFT (\lambda_1^e)^2 + \elmsum (\lambda_2^e)^2 + \elmsum (\lambda_3^e)^2 \right]^{\halfnice} \times \left[ \elmsumFT \normElm{\wvech}^2 + \elmsum \normElm{\grad \wvech}^2 + \elmsum \normElm{\opLTAdv[\uvec]\wvech + \grad\rh}^2 \right]^{\halfnice} \\
	&= \lambda \normVW[\uvec]{(\wvech,\rh)} = \lambda \normVW[\uvech]{\eta},
	\end{align*}
	where
	\begin{align*}
	\lambda_1^e &= \he^{\alpha-2}\left[C\he^2\eeuFT + \taum^e \left(C\he\eeuFT + \visco C \eeuFT + C\he^2\eepFT \right)\right], \\
	\lambda_2^e &= \he^{\alpha-2} \big[\beta R C\he^2 \eeu + \visco C\he\eeu + C\sqrt{\nsd}\he^2\eep + C\taucMax\nsd \he \eeu \\ & \qquad\qquad\qquad\qquad\qquad\qquad + \taum^e\left( CR\eeu + \visco C\visco\he^{-1}\eeu + C\he\visco\cinv \eep \right) \big], \\
	\lambda_3^e &= \he^{\alpha-2}\taum^e \left[CR\he\eeu + \visco C\eeu + C\he^2\eep \right], \\
	\text{and}\ \lambda &= \left[ \elmsumFT (\lambda_1^e)^2 + \elmsum (\lambda_2^e)^2 + \elmsum (\lambda_3^e)^2 \right]^{\halfnice} .
	\end{align*}
	We have $ \taum^e \sim \he^2 $, therefore we have
	\begin{align*}
		\lambda_1 &= C_1 h^{\alpha}, \\
		\lambda_2 &= C_2 h^{\alpha-1}, \\
		\lambda_3 &= C_3 h^{\alpha},
	\end{align*}
	where $ h=\max_{e \in \mesh} \he $.
	Using coercivity of the bilinear form, we have
	\begin{align*}
	\coer\normVW[\uvec]{\hat{\eta}}^2 \leq \bformh(\uvec; \hat{\eta}, \hat{\eta}) \leq \lambda \normVW[\uvec]{\hat{\eta}},
	\end{align*}
	therefore
	\begin{align}
	\normVW[\uvec]{\hat{\eta}} \leq \coer^{-1}\lambda = Ch^{\alpha-1}.
	\end{align}
	By triangle inequality,
	\begin{align*}
		\normVW[\uvec]{\upairh - \upair} &\leq \normVW[\uvec]{\upairh - \interpolant\upair} + \normVW[\uvec]{\interpolant \upair - \upair} \\
		&\leq C h^{\alpha-1} + C h^{\alpha-1}\\
		&= C h^{k}.
	\end{align*}
\end{proof}

\subsection{The case of nonzero initial and boundary conditions} \label{sec:nonzero-icbc}
In \eqref{eq:ns-original-dirichlet} and the preceding analysis, we considered zero boundary conditions. Suppose instead, we have the following set of equations:
\begin{subequations}\label{eq:ns-original-D}
	\begin{align}
		\opLTAdv[\uvecTilde]\uvecTilde -\visco\laplacian \uvecTilde+\grad p - 
		\fvecTilde &= \zerovec, \text{ }\xvecst\in\stDom, \label{eq:ns-original-momentum-D}
		\\
		\grad\cdot\uvecTilde &= 0, \text{ }\xvecst\in\stDom, \label{eq:ns-original-continuity-D}
		\\
		\uvecTilde &= \gvec, \text{ }\xvecst\in\bsp, \label{eq:ns-original-dirichlet-D}
		\\
		\uvecTilde &= \uvec_0, \text{ }\xvecst\in\bit, \label{eq:ns-original-ic-D},
	\end{align}
\end{subequations}
where \eqref{eq:ns-original-D} differs from \eqref{eq:ns-original} only on the boundary conditions (i.e. \eqref{eq:ns-original-dirichlet} and \eqref{eq:ns-original-dirichlet-D}). In this case, we proceed as follows.
Consider the following function spaces
\begin{subequations}
	\begin{align}
	\spaceVd_g &= \left\{ \mvec{v} \in \mvec{H}^1(\stDom):\ \mvec{v} = \mvec{g} \text{ on }\bsp, \text{ } \mvec{v} = \uvec_0 \text{ on } \bit \right\}, \\
	\spaceHdiv &= \setbuilder{\vvec \in \mvec{H}^1(\stDom):\ \divergence\vvec = 0}.
	\end{align}
\end{subequations}
Suppose $ (\tilde{\uvec}, p) $ is a solution of \eqref{eq:ns-original-D}. Now, let $ \uproxy \in \spaceVd_g \cap \spaceHdiv $, such that $ \text{Tr}(\uproxy) = \gvec $. Then the difference $ \uvec := \tilde{\uvec} - \uproxy \in \spaceVd $. Substituting $ \tilde{\uvec} = \uvec + \uproxy $ in \eqref{eq:ns-original}, and using the $ \opLTAdv $ notation (see \eqref{eq:def-la-ma}), we have
\begin{subequations}\label{eq:ns-transformed}
	\begin{align}
	\opLTAdv[\uvec+\uproxy]\uvec -\visco\laplacian\uvec + \grad p - \left[\fvecTilde - (\opLTAdv[\uvec+\uproxy]\uproxy - \visco\laplacian\uproxy)\right] &= \zerovec, \ \xvecst\in\stDom, \label{eq:ns-transformed-momentum}
	\\
	\grad\cdot\uvec &= 0, \ \xvecst\in\stDom, \label{eq:ns-transformed-continuity}
	\\
	\uvec &= 0, \ \xvecst\in\bsp \cup \bit. \label{eq:ns-transformed-dirichlet}
	\end{align}
\end{subequations}
where $ \uproxy $ is assumed to be sufficiently smooth. For a known advection field $ \avec \in \spaceVd $, the momentum equations can be written as
\begin{align}
	\opLTAdv[\avec+\uproxy]\uvec -\visco\laplacian\uvec + \grad p - \fvec &= \zerovec, \ \xvecst\in\stDom,
\end{align}
where $ \fvec := \left[\tilde{\fvec} - (\opLTAdv[\avec+\uproxy]\uproxy - \visco\laplacian\uproxy)\right] $. Since $ \uproxy $ is a chosen function, \eqref{eq:ns-transformed} can be solved for $ \uvec $ using a Galerkin formulation, and then added to $ \uproxy $ to obtain the solution $ \tilde{\uvec} $ of \eqref{eq:ns-original}. With this reformulation, we can proceed in the same way as \secref{sec:anlys-results} and obtain similar error estimates. 


\section{Implementation Details}
\label{sec:implementation}
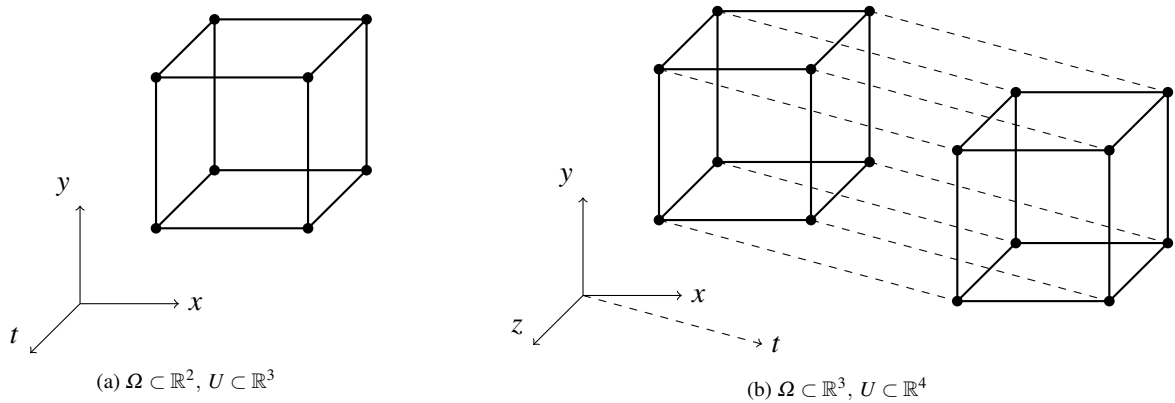
\begin{figure}[!t]
	\centering
	\begin{minipage}{0.3\textwidth}
		\centering
		\begin{tikzpicture}
		\coordinate (A) at (0,0,0);
		\coordinate (B) at (2,0,0);
		\coordinate (C) at (2,2,0);
		\coordinate (D) at (0,2,0);
		\coordinate (E) at (0,0,2);
		\coordinate (F) at (2,0,2);
		\coordinate (G) at (2,2,2);
		\coordinate (H) at (0,2,2);
		
		\draw[thick] (A) -- (B) -- (C) -- (D) -- cycle;
		\draw[thick] (A) -- (E) -- (F) -- (B);
		\draw[thick] (E) -- (H) -- (G) -- (F);
		\draw[thick] (H) -- (D);
		\draw[thick] (C) -- (G);
		
		\foreach \point in {A, B, C, D, E, F, G, H} {
			\fill (\point) circle[radius=2pt];
		}
		
		\draw[->] (-1,-1,2) -- (0.3,-1,2) node[anchor=west] {$x$};
		\draw[->] (-1,-1,2) -- (-1,0.3,2) node[anchor=south east] {$y$};
		\draw[->] (-1,-1,2) -- (-1,-1,3.7) node[anchor=south east] {$t$};
		
		%
		%
		
		\end{tikzpicture}
		\subcaption{$\spDom \subset \mathbb{R}^2,\ {\stDom} \subset \mathbb{R}^3$}
		\label{fig:q1-3d}
	\end{minipage}
	\hspace{3mm}
	\begin{minipage}{0.65\textwidth}
		\centering
		\begin{tikzpicture}
		\def \hx{+2}
		\def \hy{-3}
		\def \hz{-5}
		\def \axscale{0.6}
		
		\coordinate (A) at (0,0,0);
		\coordinate (B) at (2,0,0);
		\coordinate (C) at (2,2,0);
		\coordinate (D) at (0,2,0);
		\coordinate (E) at (0,0,2);
		\coordinate (F) at (2,0,2);
		\coordinate (G) at (2,2,2);
		\coordinate (H) at (0,2,2);
		
		\draw[thick] (A) -- (B) -- (C) -- (D) -- cycle;
		\draw[thick] (A) -- (E) -- (F) -- (B);
		\draw[thick] (E) -- (H) -- (G) -- (F);
		\draw[thick] (H) -- (D);
		\draw[thick] (C) -- (G);
		
		\foreach \point in {A, B, C, D, E, F, G, H} {
			\fill (\point) circle[radius=2pt];
		}
		
		\coordinate (I) at (0+\hx,0+\hy,0+\hz);
		\coordinate (J) at (2+\hx,0+\hy,0+\hz);
		\coordinate (K) at (2+\hx,2+\hy,0+\hz);
		\coordinate (L) at (0+\hx,2+\hy,0+\hz);
		\coordinate (M) at (0+\hx,0+\hy,2+\hz);
		\coordinate (N) at (2+\hx,0+\hy,2+\hz);
		\coordinate (O) at (2+\hx,2+\hy,2+\hz);
		\coordinate (P) at (0+\hx,2+\hy,2+\hz);
		
		\draw[thick] (I) -- (J) -- (K) -- (L) -- cycle;
		\draw[thick] (I) -- (M) -- (N) -- (J);
		\draw[thick] (M) -- (P) -- (O) -- (N);
		\draw[thick] (P) -- (L);
		\draw[thick] (K) -- (O);
		
		\foreach \point in {I, J, K, L, M, N, O, P} {
			\fill (\point) circle[radius=2pt];
		}
		
		\draw[dashed] (A) -- (I);
		\draw[dashed] (B) -- (J);
		\draw[dashed] (C) -- (K);
		\draw[dashed] (D) -- (L);
		\draw[dashed] (E) -- (M);
		\draw[dashed] (F) -- (N);
		\draw[dashed] (G) -- (O);
		\draw[dashed] (H) -- (P);
		
		\draw[->] (-1,-1,2) -- (0.3,-1,2) node[anchor=west] {$x$};
		\draw[->] (-1,-1,2) -- (-1,0.3,2) node[anchor=south east] {$y$};
		\draw[->] (-1,-1,2) -- (-1,-1,3.7) node[anchor=south east] {$z$};
		\draw[->, dashed] (-1,-1,2) -- (-1+\axscale*\hx,-1+\axscale*\hy,2+\axscale*\hz) node[anchor=west] {$t$};
		
		\end{tikzpicture}
		\subcaption{$\spDom \subset \mathbb{R}^3,\ {\stDom} \subset \mathbb{R}^4$}
		\label{fig:q1-4d}
	\end{minipage}
	\caption{$Q_1$ reference elements in 3D (left) and 4D (right). $ Q_2 $ elements are constructed similarly by adding an extra DOF at the center of the lines in each direction, and then taking tensor product.}
	\label{fig:q1-spacetime-elements}
\end{figure}
The finite element formulation obtained in \eqref{eq:fem-problem-vms} can be readily coded in any existing FEM codebase that supports continuous Galerkin method. In the following section, we use $ H^1 $ finite element spaces based on globally $ C^0 $  Lagrangian basis functions. Schematic diagrams of $ Q_1 $ reference elements in 3D and 4D are shown in \figref{fig:q1-spacetime-elements}. For spatially 2D problems ($ \spDom \subset \mathbb{R}^2 $), the space-time domain is 3-dimensional (i.e., $ \stDom \subset \mathbb{R}^3 $) where the third dimension serves as time. For spatially 3D problems ($ \spDom \subset \mathbb{R}^3 $, $ \stDom \subset \mathbb{R}^4 $), the fourth dimension stands for time. The $ Q_2 $ element in 3D and 4D are obtained similarly, with additional degrees of freedoms.

The inner products in \eqref{eq:fem-problem-vms} are calculated by Gaussian quadrature rules. The inclusion of time in the FEM approximation results in a $ (d+1) $-dimensional mesh. The Gaussain quadrature points  in higher dimensions are obtained simply by taking Cartesian-product of one-dimensional quadrature points. Since this number grows exponentially with respect to the dimension, the computational effort to evaluate the elemental stiffness matrix consequently increases. Specifically, this makes the process of evaluation of the quantities \eqref{eq:fem-problem-vms-bh} and \eqref{eq:fem-problem-vms-lh} expensive. This is ideally resolved by domain decomposition based parallelization approaches, described next.  

\subsection{Space-time domain decomposition and parallelization}
\begin{figure}[!tb]
	\centering
	\begin{minipage}[t]{0.1\textwidth}
		{\includegraphics[trim=150 30 495 300, clip, width=\textwidth] {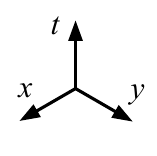}}
	\end{minipage}
	\hspace{2em}
	\begin{minipage}[t]{0.3\textwidth}
		{\includegraphics[trim=200 20 650 300, clip, width=\textwidth] {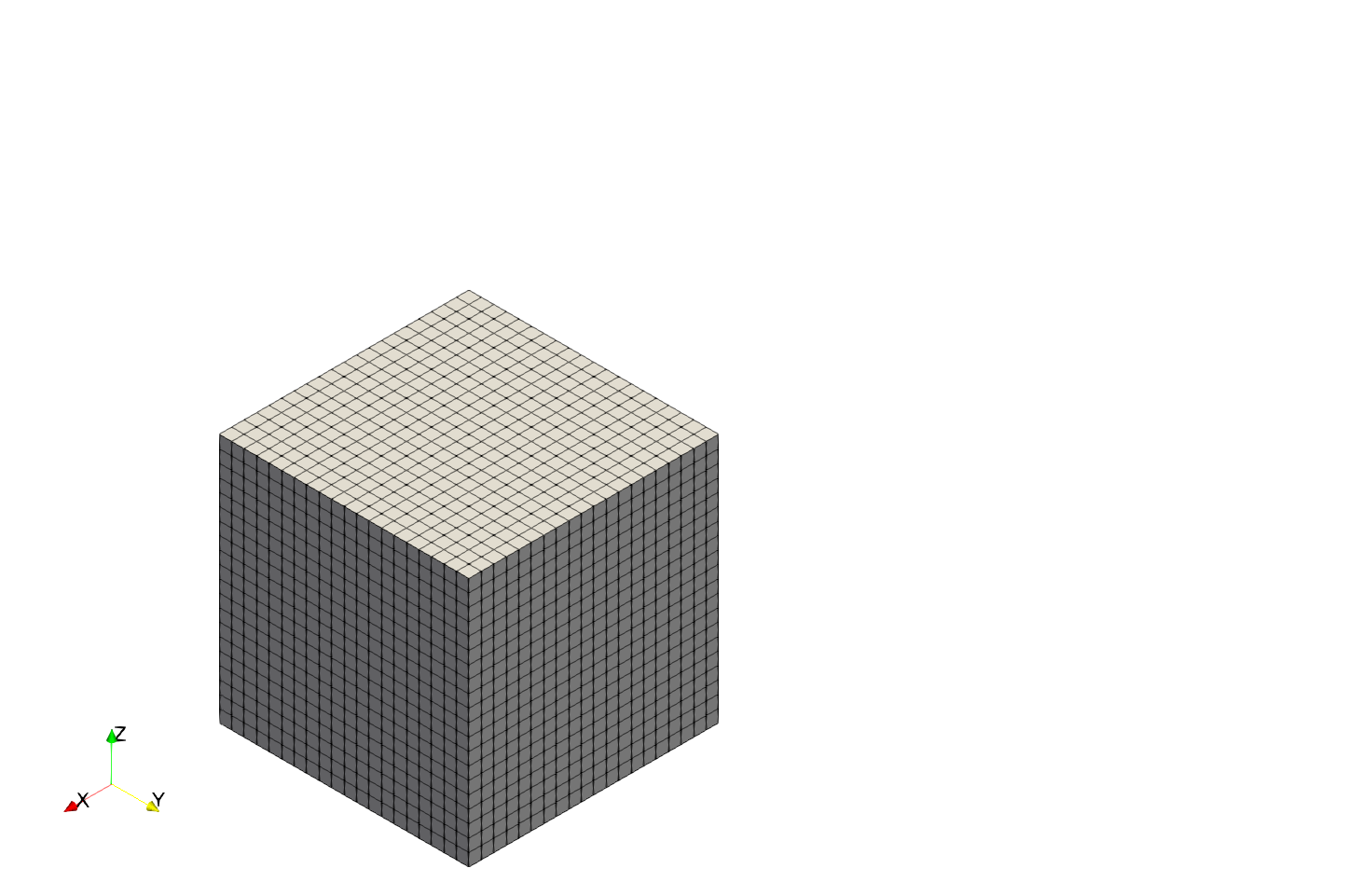}}
		\label{fig:space-time-whole-mesh}
	\end{minipage}
	\hspace{2em}
	\begin{minipage}[t]{0.3\textwidth}
		{\includegraphics[trim=200 20 650 300, clip, width=\textwidth] {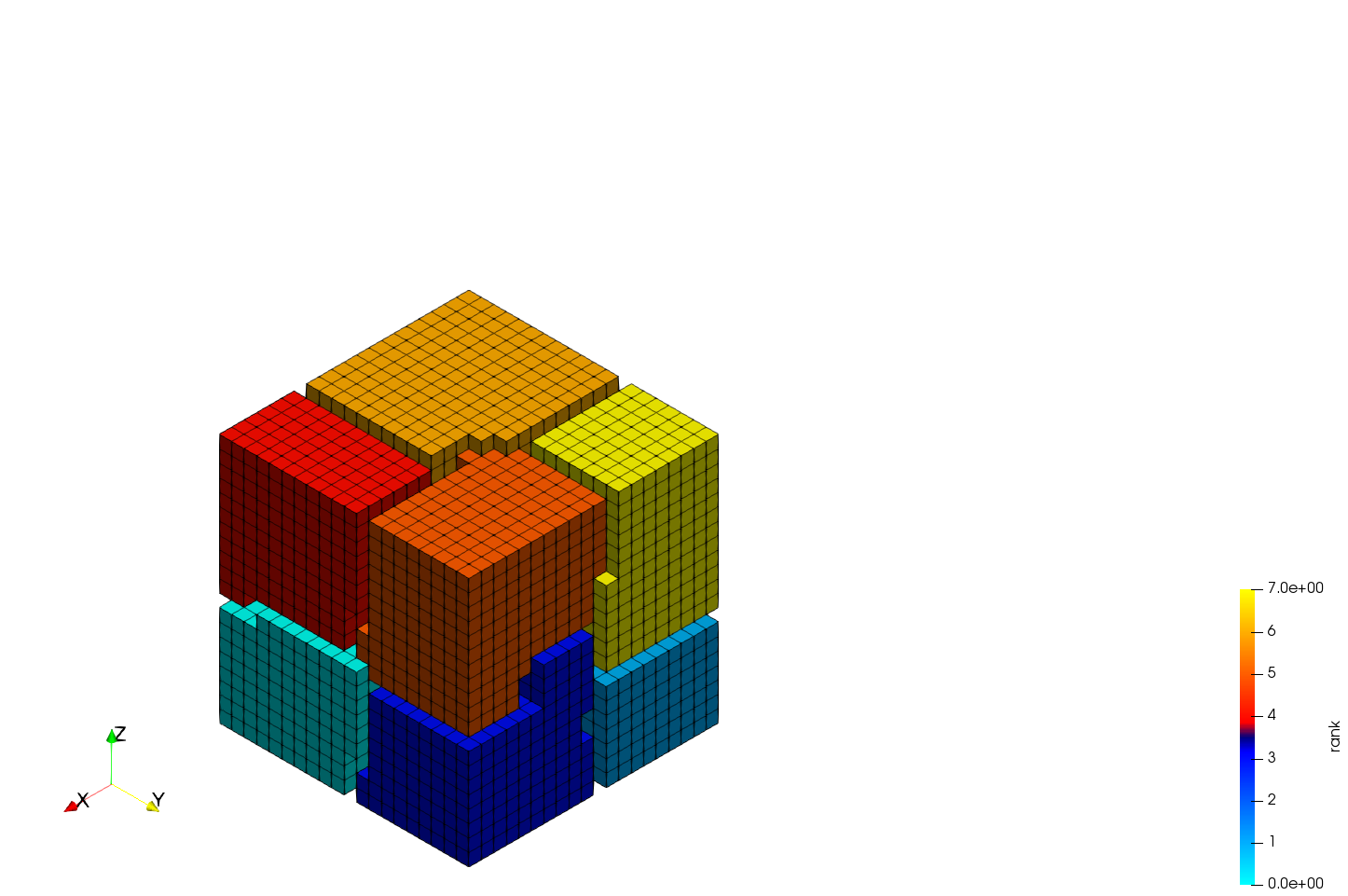}}
	\end{minipage}
	\caption{(\textit{Left}) A cubic space-time mesh in $ (2D+t) $. Spatial coordinates are $ (x,y) $, and $ z $ denotes time. (\textit{Right}) domain decomposition in space-time using 8 sub-domains.}
	\label{fig:space-time-dd}
\end{figure}
In this work, we perform domain decomposition over the full space-time mesh. Such full domain decomposition methods have been previously studied in  \cite{hughes1988space, hulbert1990space, LionsEtAl2001, pontaza2004space, tezduyar2006space, tezduyar2008arterial, scovazzi2007stabilized, song2015nitsche, gander2016analysis, dyja2018parallel} in relation to either finite-difference or discontinuous Galerkin methods. The benefit of developing the stabilized space-time continuous Galerkin method is that the space-time mesh can be decomposed without regard to the sequential time levels. Different sub-domains of the mesh contain different time points, but they are all computed in parallel. We have implemented the equations \eqref{eq:fem-problem-vms} in our in-house MPI-based parallel finite-element analysis software written in C++. The domain decomposition is performed using \parmetis{} \cite{karypis2003parmetis}, and the linear algebra and nonlinear algorithms are implemented using \petsc{} \cite{balay2001petsc}. An example visualization of the space-time domain decomposition is shown in \figref{fig:space-time-dd}. More details on the parallelization and scalability of this approach, albeit applied to linear parabolic problems, can be found in our previous work~\cite{ishii2019solving, dyja2018parallel}. We defer discussing the parallelization of the space-time Navier Stokes equations to a subsequent paper.

\section{Numerical results}
\label{sec:num-examples-results}
In this section, we demonstrate the method developed herein on benchmark problems. 
\subsection{Convergence studies with manufactured solution}
\label{sec:mms-convergence}
To test the accuracy of the method outlined above, we use the method of manufactured solutions for both $ \nsd = 2,3 $. Once again, the spatiotemporal domain (and thus the computational domain) is $ \stDom = \spDom \times \temporalInterval $. For the convergence studies, we take $ \temporalInterval = [0,1] $. For both $ \nsd = 2,3 $, we choose divergence-free velocities.
\subsubsection{2D example (3D in space-time) [$\spDom \subset \mathbb{R}^2,\ \stDom \subset \mathbb{R}^3$]}
\label{sec:mms-2d}
\input{sections/900_convergence_mms.tex}
In two spatial dimensions (i.e., $\spDom \subset \mathbb{R}^2,\ \stDom \subset \mathbb{R}^3$), we take the exact solution $ (u_x, u_y, p) $ to be
\begin{subequations}
    \begin{align}
    	u_x &= \sin(\pi x) \cos(\pi y) \sin(\pi t), \\
    	u_y &= -\cos(\pi x) \sin(\pi y) \sin(\pi t), \\
    	p &= \sin(\pi x) \sin(\pi y) \cos(\pi t).
    \end{align}
\end{subequations}
The velocity vector $\mvec{u} = (u_x, u_y) $ readily satisfies $ \grad\cdot\mvec{u} = 0 $. We obtain the right hand side forcing $ \mvec{f} $ by substituting these exact solutions $ (u_x, u_y, p) $ in  \eqref{eq:ns-original}. With the obtained forcing $ \mvec{f} $, we now solve  \eqref{eq:fem-problem-vms} using both linear ($ Q_1 $) and quadratic ($ Q_2 $) finite elements in 3D. The computational domain is based on $\stDom$, therefore the FEM problem is solved on a 3D mesh. The resulting errors in the $ L^2(\stDom) $-norm are presented for three different Reynolds numbers in  \figref{fig:conv-study-uniref-linear-basis} (for $ Q_1 $ elements) and in  \figref{fig:conv-study-uniref-quadratic-basis} (for $ Q_2 $ elements). On these $ \log-\log $ plots, we notice a slope of 2 for linear $ Q_1 $ elements and a slope of 3 for $ Q_2 $ elements. We also notice that the rate of convergence in this problem is independent of the Reynolds number. This result can be compared with the convergence estimates provided in \thmref{thm:convergence}. It can be seen that the error estimates provided therein under-predict the order of convergence because of the choice of a different norm.

\subsubsection{3D example (4D in space-time) [$\spDom \subset \mathbb{R}^3,\ \stDom \subset \mathbb{R}^4$]}
\label{sec:mms-3d}
In \emph{three} spatial dimensions (i.e., $\spDom \subset \mathbb{R}^3,\ \stDom \subset \mathbb{R}^4$), we take the exact solution $ (u_x, u_y, u_z, p) $ to be
\begin{subequations}
    \begin{align}
    	u_x &= \sin(\pi x) \cos(\pi y) \cos(\pi z) \sin(\pi t), \\
    	u_y &= -\cos(\pi x) \sin(\pi y) \cos(\pi z) \sin(\pi t), \\
        u_z &= \cos(\pi x) \sin(\pi y) \cos(\pi t), \\
    	p &= \sin(\pi x) \sin(\pi y) \sin(\pi z) \cos(\pi t).
    \end{align}
\end{subequations}
Once again, the velocity vector $\mvec{u} = (u_x, u_y, u_z) $ readily satisfies $ \grad\cdot\mvec{u} = 0 $. The forcing $ \fvec $ is obtained in the same way as for $\nsd=2$ (previous subsection). But now we use tesseract-like elements (see \figref{fig:q1-4d}, both $ Q_1 $ and $ Q_2 $) to solve this problem. The $L^2(\stDom)$-norm of the errors in the velocities and the pressure are plotted against the mesh size in \figref{fig:conv-study-uniref-linear-basis-4d}. Once again, $ Q_1 $ elements yield an order of convergence of 2 for all variables. However, $ Q_2 $ elements yield an order of convergence of 3 for the velocities, but 2 for pressure.

\subsection{Lid Driven Cavity Flow}
Next, we test the present method on the well-known ``lid-driven cavity'' problem, for both $\nsd = 2, 3.$

\begin{figure}[!t]
	\centering
	\begin{subfigure}[b]{0.23\textwidth}
		\raisebox{-5cm}{\includegraphics[width=\textwidth]{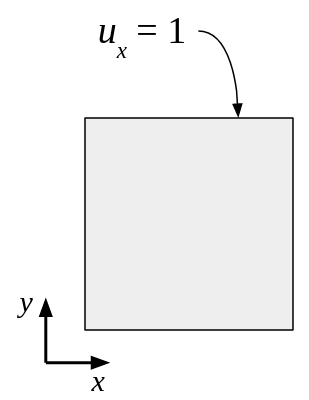}}
		\caption{$\spDom = [0,1]^2$}
		\label{fig:ldc-schematic-2d-spatial}
	\end{subfigure}
	\hspace{5em}
	\begin{subfigure}[b]{0.5\textwidth}
		\begin{tikzpicture}
		\draw (0, 0) node[inner sep=0] {\includegraphics[trim=0 0 80 0, clip, width=\textwidth] {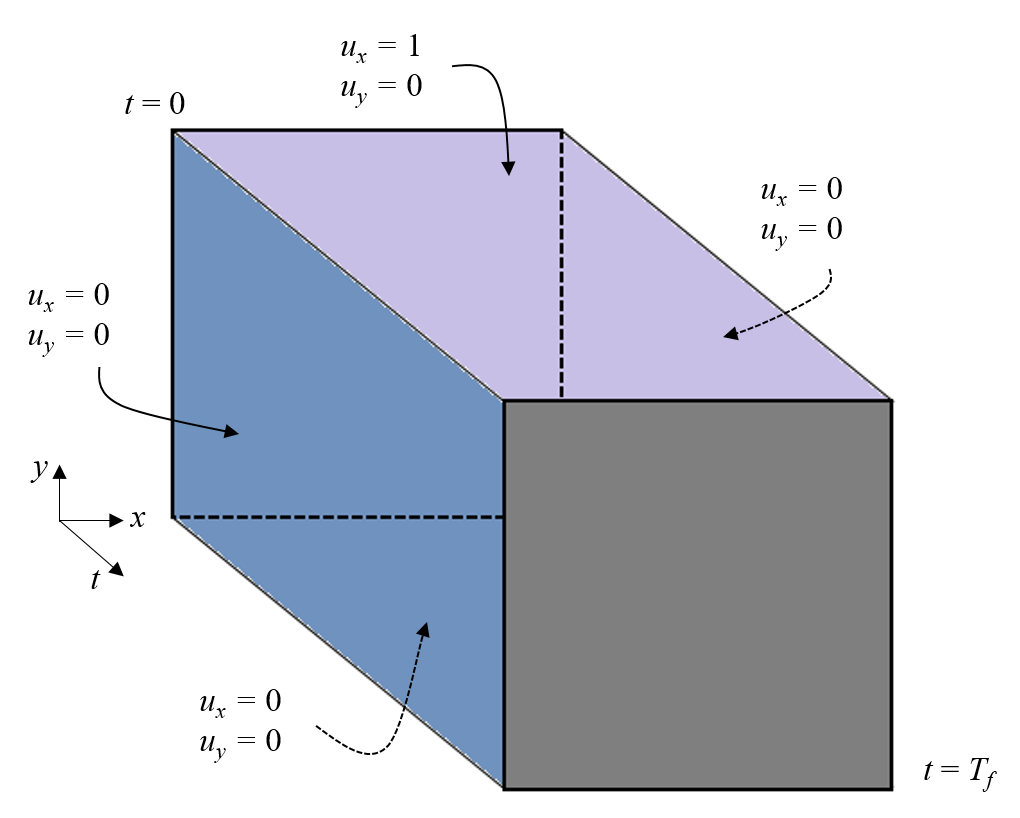}};
		\draw (4.7, -3.5) node {$t=T$};		
		\end{tikzpicture}
		\caption{$\stDom = \spDom\times [0,T]$}
		\label{fig:ldc-schematic-2d-space-time}
	\end{subfigure}
	\caption{Two dimensional lid-driven cavity problem. The spatial domain is shown in $\spDom$. All sides are no-slip, except the top surface where $u_x = 1$. The ``space-time'' computational domain is shown in (b), along with all the boundary conditions.}
	\label{fig:ldc-schematic-2d}
\end{figure}

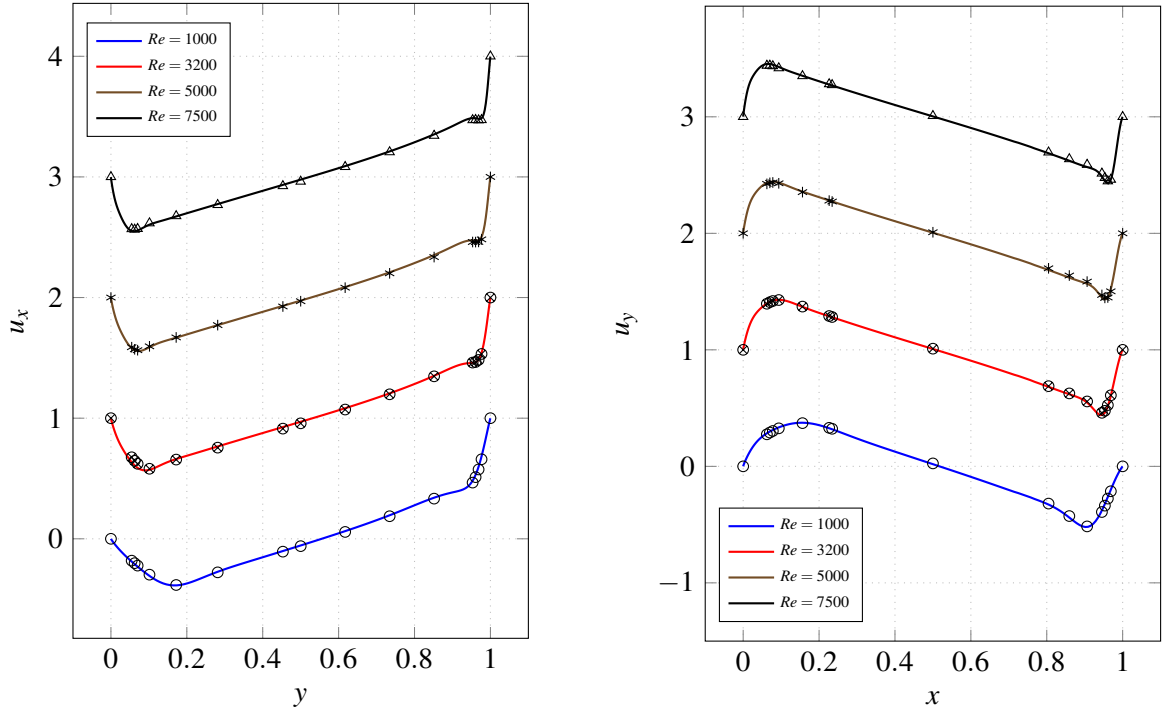
\begin{figure}[]
	\centering
	\begin{minipage}{.45\textwidth}
		\centering
		\begin{tikzpicture}
		\begin{axis}[
		width=0.99\linewidth,
		height=10cm,
		xlabel={$y$},
		ylabel={$u_x$},
		legend pos=north west,
		legend cell align=left,
		major grid style=dotted,
		grid=major,
		]

		\addplot+ [no markers, thick] 						table[x expr={\thisrow{y}},y expr={0.00+\thisrow{vel_x}},col sep=space]{data_files/LDC_study/Re-1000-128x128-cos-vel_x.curve};
		\addplot+ [no markers, thick] 						table[x expr={\thisrow{y}},y expr={1.00+\thisrow{vel_x}},col sep=space]{data_files/LDC_study/Re-3200-128x128-cos-vel_x.curve};
		\addplot+ [no markers, thick] 						table[x expr={\thisrow{y}},y expr={2.00+\thisrow{vel_x}},col sep=space]{data_files/LDC_study/Re-5000-128x128-cos-vel_x.curve};
		\addplot+ [no markers, thick] 						table[x expr={\thisrow{y}},y expr={3.00+\thisrow{vel_x}},col sep=space]{data_files/LDC_study/Re-7500-128x128-cos-vel_x.curve};
		\addplot+ [only marks,color = black,mark=o,] 		table[x expr={\thisrow{y}},y expr={0.00+\thisrow{1000}},col sep=space]{data_files/LDC_study/ghia_vel_x.curve};
		\addplot+ [only marks,color = black,mark=otimes,]	table[x expr={\thisrow{y}},y expr={1.00+\thisrow{3200}},col sep=space]{data_files/LDC_study/ghia_vel_x.curve};
		\addplot+ [only marks,color = black,mark=asterisk,] table[x expr={\thisrow{y}},y expr={2.00+\thisrow{5000}},col sep=space]{data_files/LDC_study/ghia_vel_x.curve};
		\addplot+ [only marks,color = black,mark=triangle,] table[x expr={\thisrow{y}},y expr={3.00+\thisrow{7500}},col sep=space]{data_files/LDC_study/ghia_vel_x.curve};
		\legend{\tiny $Re = 1000$, \tiny $Re = 3200$, \tiny $Re = 5000$, \tiny $Re = 7500$}
		\end{axis}
		\end{tikzpicture}
		\label{fig:ldc-re-1000-ux}
	\end{minipage}
	\hspace{3mm}
	\begin{minipage}{.45\textwidth}
		\centering
		\begin{tikzpicture}
		\begin{axis}[
		width=0.99\linewidth,
		height=10cm,
		ymin=-1.5,
		xlabel={$x$},
		ylabel={$u_y$},
		legend pos=south west,
		legend cell align=left,
		major grid style=dotted,
		grid=major,
		]

		\addplot+ [no markers, thick]						table[x expr={\thisrow{x}},y expr={0.00+\thisrow{vel_y}},col sep=space]{data_files/LDC_study/Re-1000-128x128-cos-vel_y.curve};
		\addplot+ [no markers, thick]						table[x expr={\thisrow{x}},y expr={1.00+\thisrow{vel_y}},col sep=space]{data_files/LDC_study/Re-3200-128x128-cos-vel_y.curve};
		\addplot+ [no markers, thick]						table[x expr={\thisrow{x}},y expr={2.00+\thisrow{vel_y}},col sep=space]{data_files/LDC_study/Re-5000-128x128-cos-vel_y.curve};
		\addplot+ [no markers, thick]						table[x expr={\thisrow{x}},y expr={3.00+\thisrow{vel_y}},col sep=space]{data_files/LDC_study/Re-7500-128x128-cos-vel_y.curve};
		\addplot+ [only marks,color = black,mark=o,] 		table[x expr={\thisrow{x}},y expr={0.00+\thisrow{1000}},col sep=space]{data_files/LDC_study/ghia_vel_y.curve};
		\addplot+ [only marks,color = black,mark=otimes,]	table[x expr={\thisrow{x}},y expr={1.00+\thisrow{3200}},col sep=space]{data_files/LDC_study/ghia_vel_y.curve};
		\addplot+ [only marks,color = black,mark=asterisk,] table[x expr={\thisrow{x}},y expr={2.00+\thisrow{5000}},col sep=space]{data_files/LDC_study/ghia_vel_y.curve};
		\addplot+ [only marks,color = black,mark=triangle,] table[x expr={\thisrow{x}},y expr={3.00+\thisrow{7500}},col sep=space]{data_files/LDC_study/ghia_vel_y.curve};
		\legend{\tiny $Re = 1000$, \tiny $Re = 3200$, \tiny $Re = 5000$, \tiny $Re = 7500$}
		\end{axis}
		\end{tikzpicture}
		\label{fig:ldc-re-1000-uy}
	\end{minipage}	
	\caption{Midline velocity profiles for the lid-driven cavity problem (see \secref{sec:ldc2d} and \figref{fig:ldc-schematic-2d}) at $ t=T $ for different Reynolds numbers. Left column shows $ u_x(x=0.5,\ y,\ t=T) $ against $ y $, whereas right column shows $ u_y(x,\ y=0.5,\ t=T) $ against $ x $. Curves for different $ Re $ are shifted successively by one unit in the vertical direction for clarity. Results from Ghia et al. \cite{ghia1982high} included for comparison (shown in markers).}
	\label{fig:ldc-re-all-panel-1}
\end{figure}
\subsubsection{2D example (3D in space-time) [$\spDom \subset \mathbb{R}^2,\ \stDom \subset \mathbb{R}^3$]}
\label{sec:ldc2d}

\figref{fig:ldc-schematic-2d-space-time} shows a schematic of the ``2D 
version'' of this problem, where $\spDom = [0,1]^2$, and $\stDom = [0,1]^2 \times [0,T]$. The benchmark results for this problem generally refer to the steady-state solution. Thus, the final time $ T $ is chosen such that a steady state is achieved, i.e., $ \partial_t u_i = 0,\ i=1,\ 2 $ for some $ t = t_s $; with $ T $ being strictly larger than  $ t_s $. The ``steady-state'' time value ($ t_s $) increases with increase 
in the Reynolds number, thus $ T $ also grows with $ Re $. In the present work, the time-horizon values 
chosen for the space-time simulations of this problem are $ T =  180, 180, 360, 720$ for $ Re = 1000,\ 3200,\ 5000,\ 7500 $ respectively.


\figref{fig:ldc-re-all-panel-1} shows the results obtained for different Reynolds numbers, $ Re = 1000,\ 3200,\ 5000,\ 7500 $. The left column plots the horizontal velocity $ u_x $ on a vertical line at $ x = 0.5 $ and $ t = T $, in other words, $ u_x(x=0.5, y, t=T) $ vs. $ y $. And similarly the right column plots $ u_y(x, y=0.5, t=T) $ vs. $ x $. The results in \figref{fig:ldc-re-all-panel-1} are compared with reference solutions found in \cite{ghia1982high}.

\subsubsection{3D example (4D in space-time), [$\spDom \subset \mathbb{R}^3,\ \stDom \subset \mathbb{R}^4$]}
\label{sec:ldc3d}

A schematic diagram of the driven cavity problem in three dimensions is shown in \figref{fig:ldc-schematic-3d}, where $\spDom = [0,1]^3$ and $\stDom =\spDom \times [0,T]$. The boundary conditions are given by:
\begin{subequations}
\label{eq:ldc3d-bc}
\begin{align}
u_x &= \begin{cases}
1 \ \text{at}\ y=1, \\
0 \ \text{elsewhere}, \\
\end{cases} \\
u_y=u_z &= 0 \ \text{on all surfaces}.
\end{align}
\end{subequations}
We solve \eqref{eq:fem-problem-vms} along with \eqref{eq:ldc3d-bc} for $ \visco = 0.002 $ (or $ Re = 500 $) using a mesh of $ 32^3 \times 100 $ $ Q_1 $ tesseract elements (see \figref{fig:q1-4d}). Temporal slices of the streamlines are shown in \figref{fig:ldc-3d-streamlines}.
\begin{figure}[!b]
	\centering
	\begin{subfigure}[b]{0.3\textwidth}
		\raisebox{-5cm}{\includegraphics[width=\textwidth]{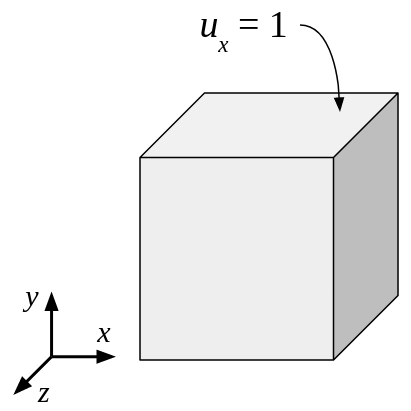}}
	\end{subfigure}
	\caption{Three dimensional lid-driven cavity problem. The spatial domain is $\spDom = [0,1]^3$. All sides are no-slip, except the $y=1$ surface where $u_x = 1$. The computational domain $\stDom = \spDom \times [0,T]$ is four-dimensional.}
	\label{fig:ldc-schematic-3d}
\end{figure}


\begin{figure}[!t]
	\centering
	\begin{subfigure}[b]{0.99\textwidth}
		\includegraphics[trim=0 50 0 0,clip, width=\textwidth]{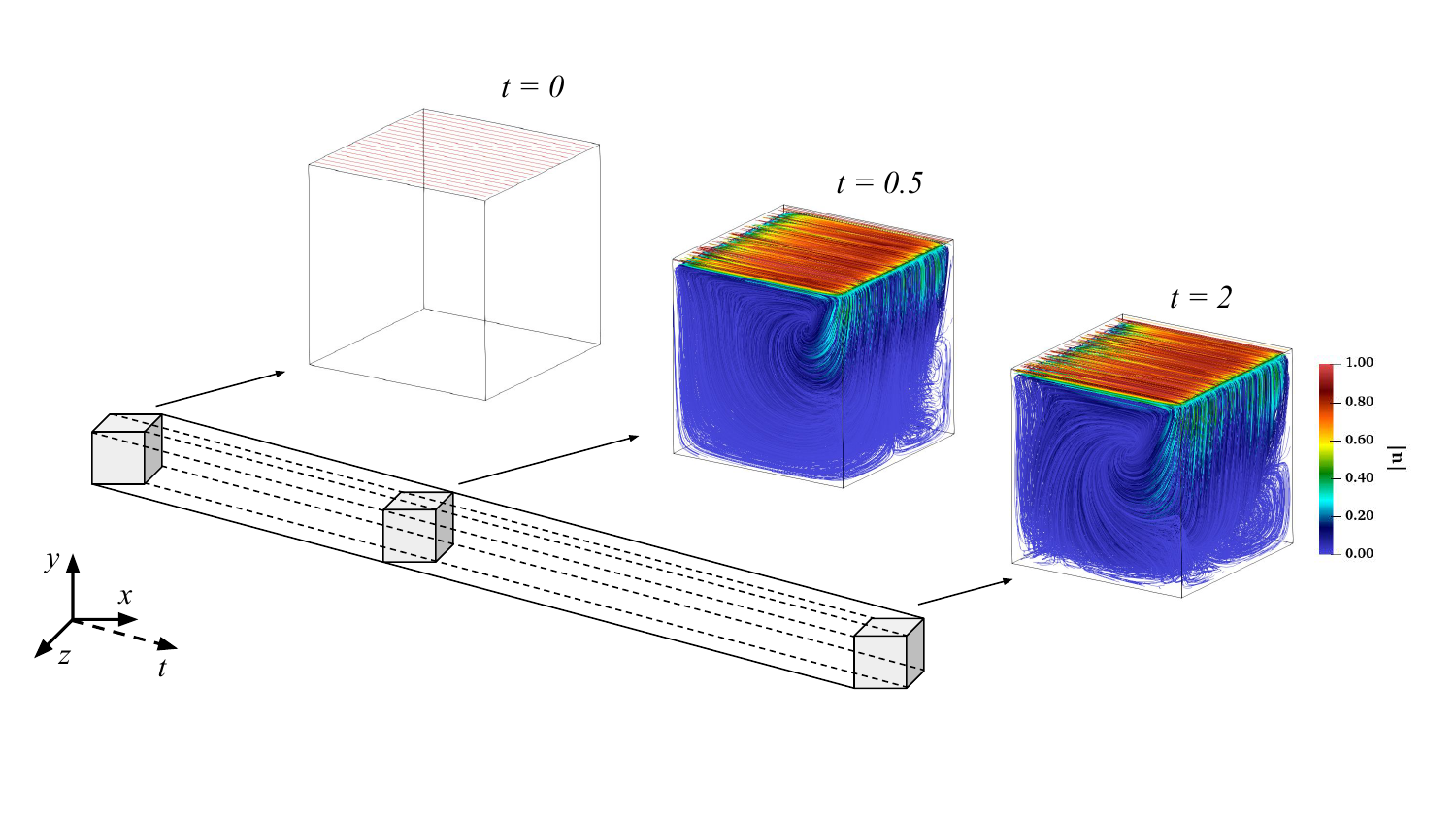}
	\end{subfigure}
	\caption{Temporal slices of streamlines (colored by the velocity magnitured $|\uvec |$) after solving the lid-driven cavity problem in 3D. The computational domain is 4D.}
	\label{fig:ldc-3d-streamlines}
\end{figure}

\subsection{Flow past a circular cylinder}
\label{sec:fpc}
\subsubsection{Problem definition}
\begin{figure}[!tb]
	\centering
	\begin{minipage}[b]{0.39\textwidth}
		\includegraphics[width=\textwidth]{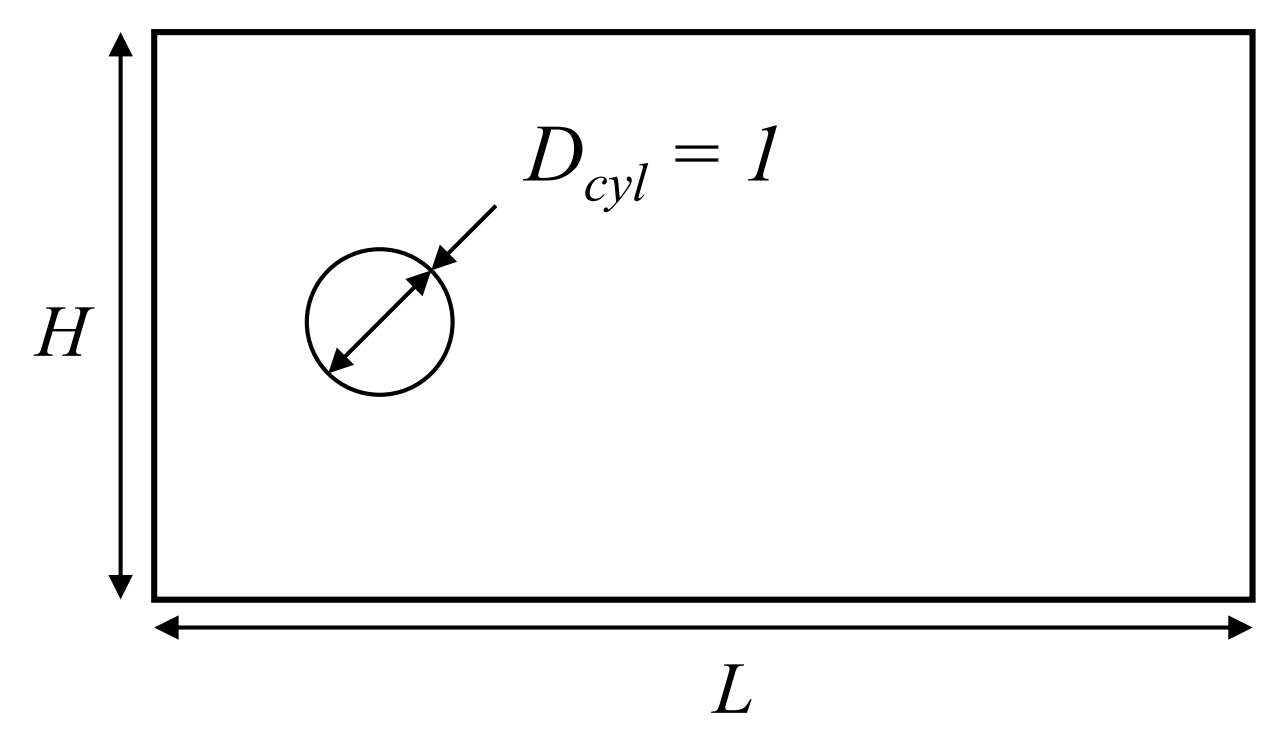}
	\end{minipage}
	\begin{minipage}[b]{0.49\textwidth}
		\includegraphics[trim=120 50 75 140, clip, width=\textwidth]{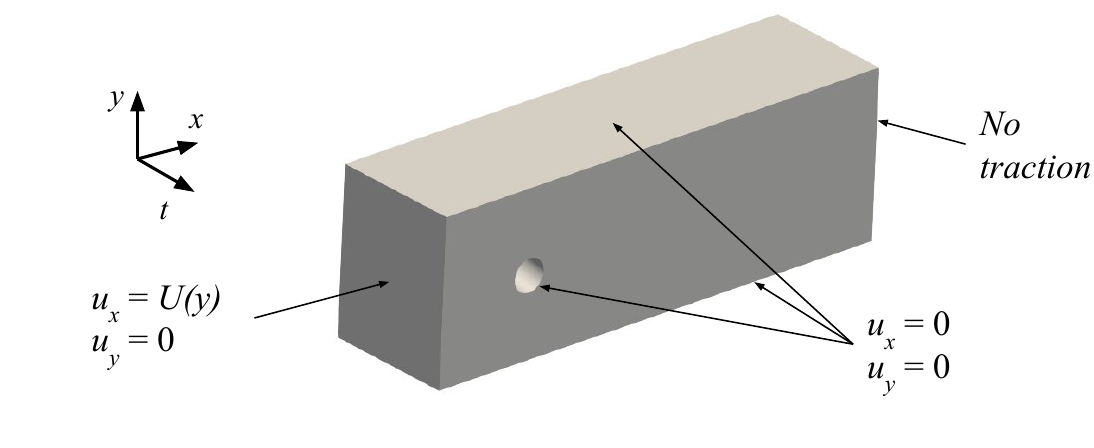}
	\end{minipage}
	\caption{Schematic diagram of the flow past a circular cylinder problem: 
	(left) spatial domain, (right) spatiotemporal domain.}
	\label{fig:fpc-schematic-2d}
\end{figure}

\begin{figure}[!tb]
	\centering
	\begin{minipage}[b]{0.6\textwidth}
		\includegraphics[trim=200 20 200 20, clip, width=\textwidth] {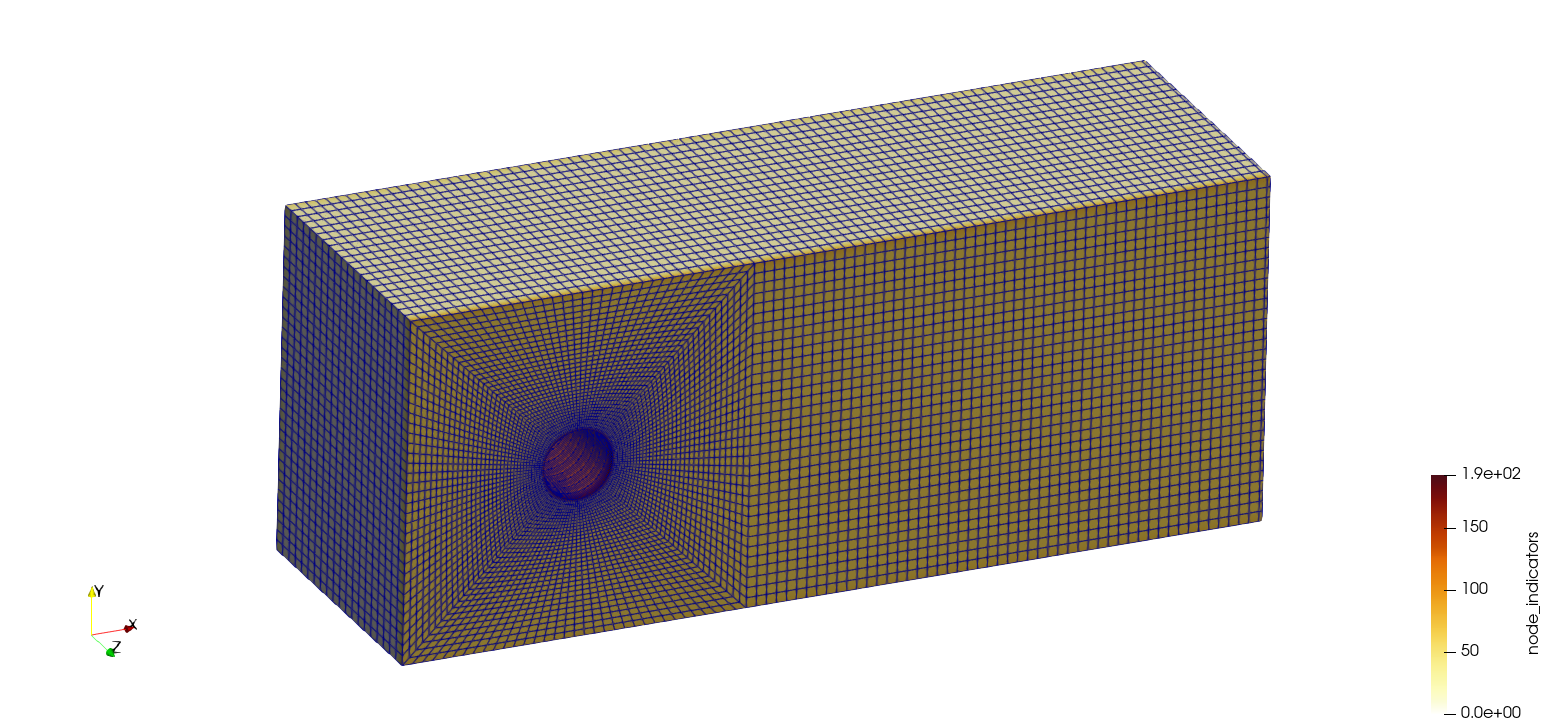}
		\caption{Space-time mesh with hexahedral elements}
		\label{fig:fpc_mesh_const}
	\end{minipage}
\end{figure}

\begin{table}[]
	\centering
	\caption{Benchmark configurations}
	\label{tab:expt-config}
	\begin{tabular}{@{}c|ccc|ccc|c|cc|cc@{}}
		\toprule[2pt]
		\multirow{2}{*}{\begin{tabular}[c]{@{}c@{}}Reference\\ text\end{tabular}} & \multirow{2}{*}{$D$} & \multirow{2}{*}{$H/D$} & \multirow{2}{*}{$L/D$} & \multirow{2}{*}{$u_c$} & \multirow{2}{*}{$u_{ref}$} & \multirow{2}{*}{$\nu$} & \multirow{2}{*}{$Re = \frac{u_{ref}D}{\visco}$} & \multicolumn{2}{c|}{\begin{tabular}[c]{@{}c@{}}Reference\\ values\end{tabular}} & \multicolumn{2}{c}{\begin{tabular}[c]{@{}c@{}}Computed\\ values\end{tabular}} \\ \cmidrule(l){9-12} 
		&  &  &  &  &  &  &  & $C_d$ & $St$ & $C_d$ & $St$ \\  \midrule[2pt]
		\multirow{2}{*}{\cite{schafer1996benchmark}} & \multirow{2}{*}{0.1} & \multirow{2}{*}{4.1} & \multirow{2}{*}{22} & 0.3 & 0.2 & $10^{-3}$ & 20 & 5.57 & - & 5.64 & - \\ \cmidrule(l){5-12} 
		&  &  &  & 1.5 & 1.0 & $10^{-3}$ & 100 & 3.22 & 0.295 & 3.12 & 0.29 \\ \midrule[1pt]
		\multirow{2}{*}{\cite{kanaris2011three}} & \multirow{2}{*}{1} & \multirow{2}{*}{5} & \multirow{2}{*}{40} & 1 & 1 & $ \nicefrac{1}{15} $ & 15 & 3.11 & - & 3.24 & - \\ \cmidrule(l){5-12} 
		&  &  &  & 1 & 1 & $ \nicefrac{1}{100} $ & 100 & 1.30 & 0.17 & 1.36 & 0.17 \\ \bottomrule[2pt]
	\end{tabular}
\end{table}

Next, we test our method on the problem of two-dimensional flow past a circular 
cylinder. The setup is taken from \cite{schafer1996benchmark,kanaris2011three,zovatto2001flow}. The schematic of the problem is shown in  
\figref{fig:fpc-schematic-2d}. The spatial domain is $ \spDom = ((I_x\times I_y) \backslash C) $, where $ I_x = [0,L],\ I_y = [0,H] $ and $ C $ denotes a circle of diameter $ {\spDom}_{cyl} $ with center at $ (x_c,y_c) $. The space-time domain $ \stDom = \spDom \times I_t $.
The boundary conditions are specified as:
\begin{subequations}
	\label{eq:fpc-boundary-conditions}
	\begin{align}
	&\text{at } x=0:\enspace u_x(y,t) = \sigma(y),  \quad u_y(y,t) = 0, \\
	&\text{at } x=L:\enspace \normalvec\cdot(p \mmat{I} + \visco \grad \uvec) = 0, \\
	&\text{at } y = 0,\ H:\enspace u_x(y,t) = u_y(y,t) = 0, \\
	&\text{on } C:\enspace u_x(y,t) = u_y(y,t) = 0,
	\end{align}
\end{subequations}
where the inlet flow profile $ \sigma(y) $ is given by
\begin{align}\label{eq:fpc-inlet-vel}
	\sigma(y) = 4 u_c H (y-H) / H^2,
\end{align}
which is a parabolic profile with $ \sigma(0) = \sigma(H)=0 $ and $ \sigma_m := \sigma(\nicefrac{H}{2}) = u_c $. The average inlet velocity is given by $ \bar{\sigma} = \nicefrac{2}{3} u_c $. The force on the cylinder in the direction $ \wvec $ is expressed as
\begin{align}
F_w = \int_{\Gamma_C} \left[ \left( \visco [\grad \uvec + \grad \uvec^T] -p \mathbb{I}\right) \normalvec \right]\cdot \wvec \ dS.
\end{align}
We set $ \wvec = (1,0) $ to extract the drag force, and $ \wvec = (0,1) $ to extract the lift force.

We compare our results against two different setups. These are summarized in \tabref{tab:expt-config}. The first setup is taken from \cite{schafer1996benchmark}, where the viscosity is kept constant, and the Reynolds number is changed by changing the inlet velocity. The second setup is taken from \cite{kanaris2011three}. The Reynolds number for this problem is defined as $ Re = \frac{U_{ref} D}{\nu}$. $ U_{ref} = \bar{\sigma} = \nicefrac{2}{3} u_c $ in the first setup, and $ U_{ref} = \sigma_m = u_c $ in the second setup.

We solve \eqref{eq:fem-problem-vms} along with \eqref{eq:fpc-boundary-conditions} using hexahedral $ Q_1 $ elements. \figref{fig:fpc_mesh_const} shows a mesh obtained by discretizing the domain $ \stDom $. For solving the nonlinear system of equations using the Newton-Raphson method, an initial guess is required. Note that the initial guess here refers to a guess for the entire space-time mesh. Coming up with a meaningful guess is non-trivial. In the present case, the initial guess was set to zero for all the nodes in the mesh, except those on the Dirichlet boundaries (this includes the initial-time boundary). Even with this simple initial guess, the Newton-Raphson method is successful in reaching a spatiotemporal solution for this problem.

\subsubsection{Results}
We present results for $ 20 \leq Re \leq 400 $. Benchmark comparisons are presented in \tabref{tab:expt-config} and in \figref{fig:fpc-2d-validation}. \tabref{tab:expt-config} summarizes the mean coefficient of drag $ C_d $ and the Strouhal number $ St $ for one steady and one unsteady case each taken from \cite{kanaris2011three} and \cite{schafer1996benchmark}. A more extensive comparison (based on the setup of \cite{kanaris2011three}) is presented in \figref{fig:fpc-2d-validation}, including additional references. For $ Re > 50 $, the solution $ (\uvech,\ph) $ reach a time-periodic state after an initial transient phase. To obtain this result in a space-time simulation as discussed in \secref{sec:formulations}, we need to choose the domain as a tensor product of the spatial domain $ \spDom = (I_x\times I_y)\backslash C $ and the time interval $ I_T = [0,T] $ such that $ T > t_c $ or $ [0,t_c] \subset I_T $ (see  \figref{fig:fpc-schematic-2d}).
\begin{figure}[!tb]
	\centering
	\begin{minipage}{0.49\textwidth}
		\centering
		\includegraphics[trim=0 70 0 0,clip, width=0.9\linewidth]{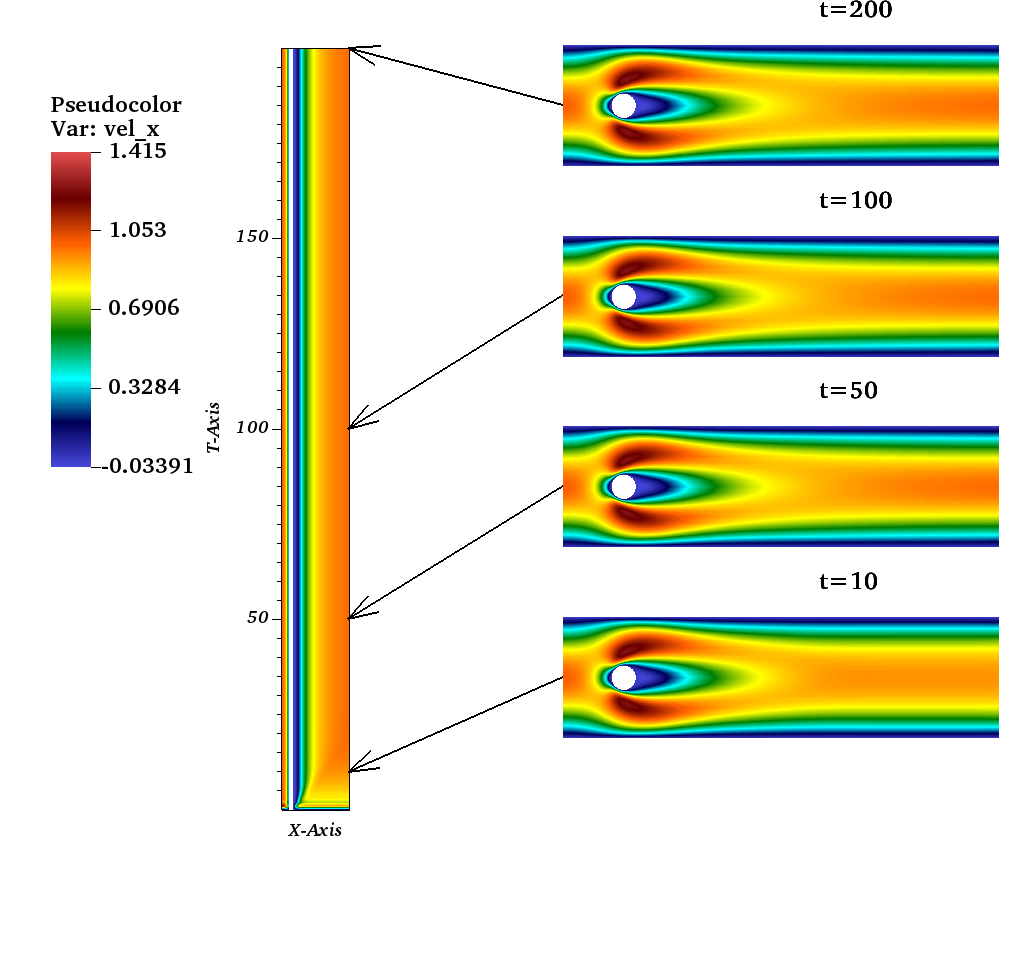}
		\subcaption{$ u_x $}
		\label{fig:fpc_Re20_velx}
	\end{minipage}
	\begin{minipage}{0.49\textwidth}
		\centering
		\includegraphics[trim=0 70 0 0,clip, width=0.9\linewidth]{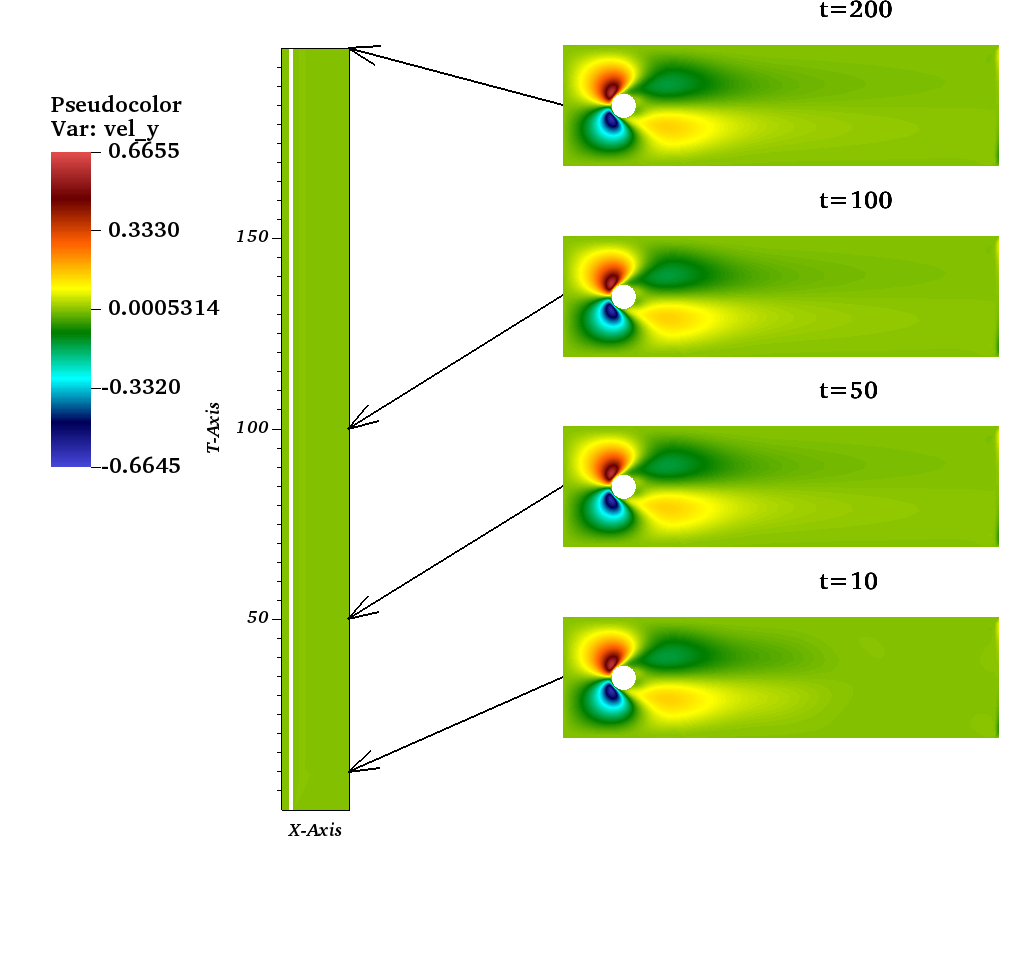}
		\subcaption{$ u_y $}
		\label{fig:fpc_Re20_vely}
	\end{minipage}
	\caption{Velocity contours for flow past a circular cylinder, $ Re = 20 $.}
	\label{fig:fpc_Re20_velocities}
\end{figure}
\begin{figure}[!tb]
	\centering
	\begin{minipage}{0.49\textwidth}
		\centering
		\includegraphics[trim=0 70 0 0,clip, width=0.99\linewidth]{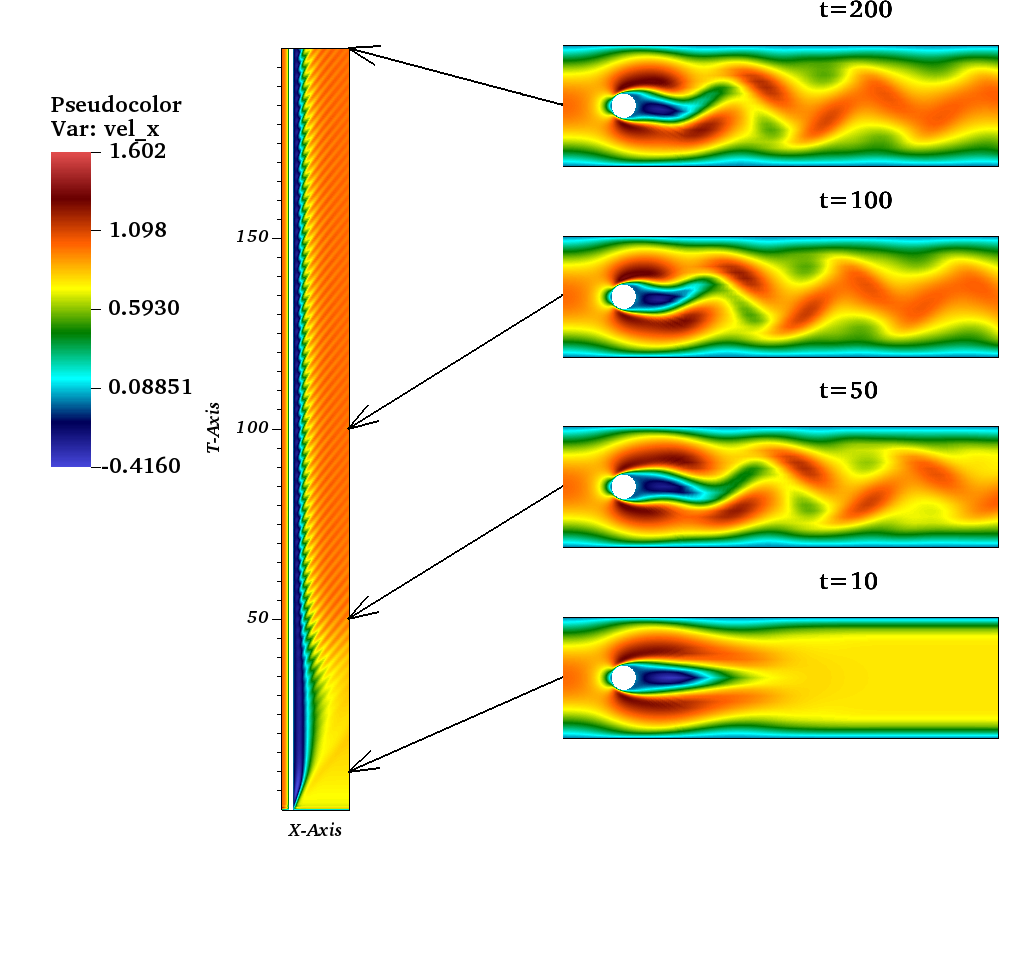}
		\subcaption{$ u_x $}
		\label{fig:fpc_Re100_velx}
	\end{minipage}
	\begin{minipage}{0.49\textwidth}
		\centering
		\includegraphics[trim=0 70 0 0,clip, width=0.99\linewidth]{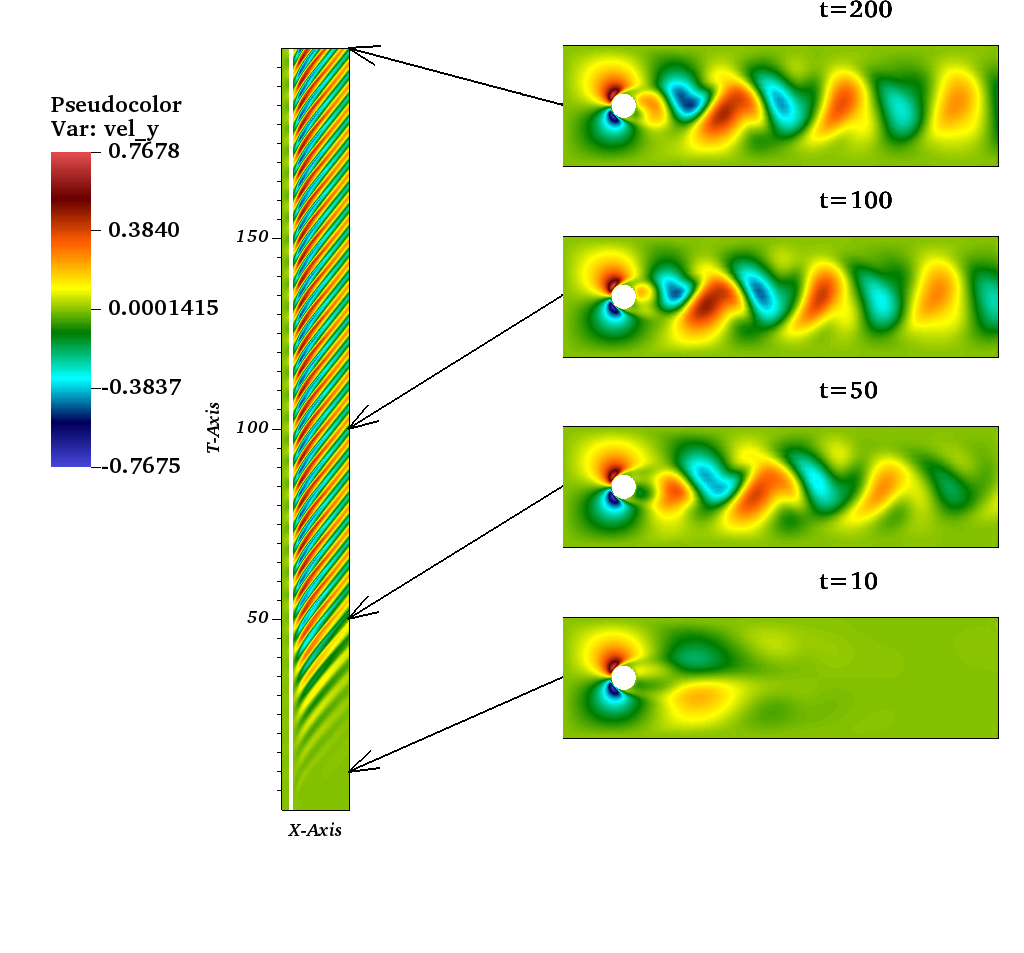}
		\subcaption{$ u_y $}
		\label{fig:fpc_Re100_vely}
	\end{minipage}
	\caption{Velocity contours for flow past a circular cylinder, $ Re = 100 $.}
	\label{fig:fpc_Re100_velocities}
\end{figure}

\figref{fig:fpc_Re20_velocities} and \figref{fig:fpc_Re100_velocities} show temporal slices of the velocity solutions for $ Re=20 $ and $ Re = 100 $ respectively.
Both show an $ x-t $ slice (at $ y=H/2 $) and several $ x-y $ slices at multiple $ t $-values of the 3D domain. The $ x-t $ slice at $ y=H/2 $ is seen on the left, with the time-axis extending upward. This slice shows the solution $ u_x(x,t)|_{y=H/2} $ evolving in time. Also, the slices in \figref{fig:fpc_Re20_velocities} show the emergence of a steady state, whereas those in \figref{fig:fpc_Re100_velocities} show that the solution exhibits some signs of periodicity around $ t=50 $ and really falls into a periodic pattern roughly around $ t>70 $.

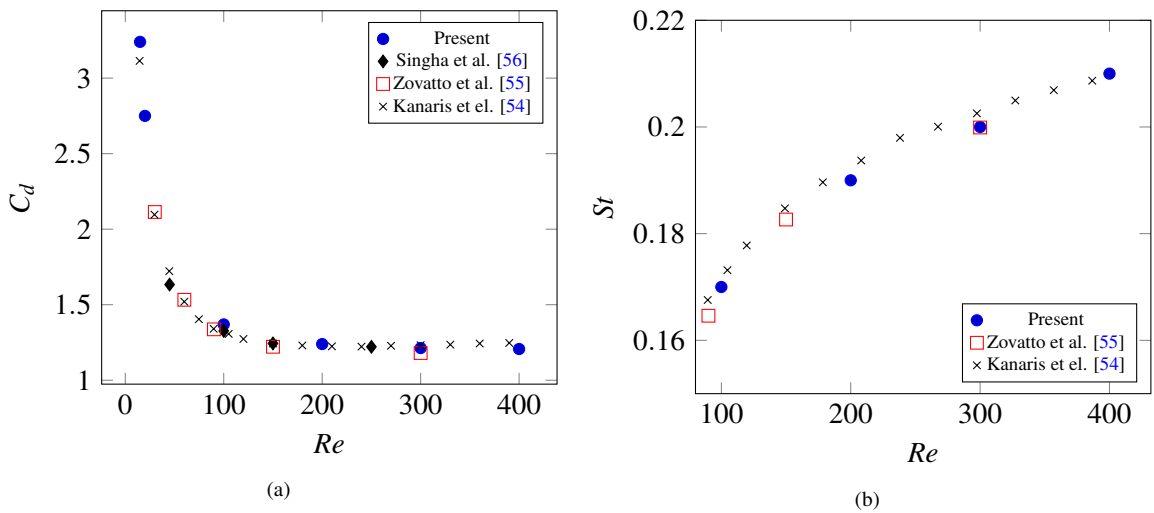
\begin{figure}[!htb]
		\centering
		\begin{minipage}{0.45\textwidth}
			\centering
		\begin{tikzpicture}
		\begin{axis}[width=0.99\linewidth,
		scaled y ticks=true,
		xlabel={$Re$},
		ylabel={$C_d$},
		legend style={at={(0.5,-0.25)},anchor=north, nodes={scale=0.65, transform shape}}, 
		legend pos=north east, 
		]
		\addplot+[only marks, mark size=2.1pt, color=blue] table [x expr=\thisrowno{0},y expr=\thisrowno{1},col sep=comma] {data_files/FPC_study/strouhal_data.txt}; \addlegendentry{Present}
		\addplot+[only marks, mark=diamond*, mark options={scale=1.2}, color=black] table [x expr=\thisrowno{0},y expr=\thisrowno{1},col sep=comma] {data_files/FPC_study/singha-drag-data.txt}; \addlegendentry{Singha et al. \cite{singha2010flow}}
		\addplot+[only marks, mark=square, mark options={scale=1.2}, color=red] table [x expr=\thisrowno{1},y expr=\thisrowno{2},col sep=comma] {data_files/FPC_study/zovatto-drag-data.txt}; \addlegendentry{Zovatto et al. \cite{zovatto2001flow}}
		\addplot+[only marks, mark=x, color=black] table [x expr=\thisrowno{0},y expr=\thisrowno{1},col sep=comma] {data_files/FPC_study/kanaris-drag-data.txt}; \addlegendentry{Kanaris et el. \cite{kanaris2011three}}
		\end{axis}
		\end{tikzpicture}
		\subcaption{}
		\label{fig:drag}
		\end{minipage}
		\begin{minipage}{0.45\textwidth}
			\centering
		\begin{tikzpicture}
		\begin{axis}[width=0.99\linewidth,
		scaled y ticks=true,
		xlabel={$Re$},
		ylabel={$St$},
		legend style={at={(0.5,-0.25)},anchor=north, nodes={scale=0.65, transform shape}}, 
		legend pos=south east, 
		xmin = 80,
		ymin=0.15,ymax=0.22
		]
		\addplot+ [only marks, mark size=2.1pt, color=blue] table [x expr=\thisrowno{0},y expr=\thisrowno{2},col sep=comma] {data_files/FPC_study/strouhal_data.txt}; \addlegendentry{Present}
		\addplot+ [only marks, mark=square, mark options={scale=1.2, fill=white}] table [x expr=\thisrowno{1},y expr=\thisrowno{3},col sep=comma] {data_files/FPC_study/zovatto-drag-data.txt}; \addlegendentry{Zovatto et al. \cite{zovatto2001flow}}
		\addplot+ [only marks, mark=x, color=black] table [x expr=\thisrowno{0},y expr=\thisrowno{1},col sep=comma] {data_files/FPC_study/kanaris-st-data.txt}; \addlegendentry{Kanaris et el. \cite{kanaris2011three}}
		\end{axis}
		\end{tikzpicture}
		\subcaption{}
		\label{fig:st}
		\end{minipage}
\caption{{Results for flow past a circular cylinder in two spatial dimensions:} (a) Drag coefficient $ C_d $, (b) Strouhal number $ St $}
\label{fig:fpc-2d-validation}
\end{figure}




\section{Conclusions}
\label{sec:conclusions}

In this paper, we present a method for solving flow problems in space-time using continuous Galerkin finite element method. At the discrete level, such a problem can become unstable when the equation is dominated by the convection term. Also, applying equal-order finite elements for both velocity and pressure can render a non-trivial null space for the pressure solution, thus running into non-uniqueness of pressure. Both these problems can be resolved by formulating the FEM problem using a variational multiscale approach. The application of VMS provides stability against both spatial advection as well as temporal advection (i.e., the time derivative term). It also transforms the saddle point operator into a coercive operator, thus restoring the uniqueness of solutions.

As shown in the numerical results in this paper, the proposed method is able to provide convergence rates as expected. The convergence is both in space and in time. The convergence rate is found to be independent of the Reynolds number. We also tested the method on two benchmark problems, namely, the lid driven cavity problem and the flow past a cylinder problem. Both problems displayed satisfactory results that match previously reported results in the literature.

It can be argued that, everything else being equal, a space-time solution is more expensive than a marching solution for the same problem. But this increased complexity of computation also gives us an opportunity to leverage the scaling capabilities of modern codebases designed for supercomputing facilities. Since the entire time-horizon is included in the mesh, a space-time mesh is extremely well-suited for domain-decomposed analyses. Therefore, even though currently a singular marching simulation is cheaper than a single space-time simulation, it can be shown \cite{ishii2019solving} that with a proper design of scalable codes and proper selection of supercomputing resources, the time-cost of solving PDEs in space-time can be significantly reduced in comparison to time-marching approaches.

\section{Acknowledgments}
This work was partly supported by the National Science Foundation under the grants NSF LEAP-HI 2053760, NSF 1935255.


\bibliographystyle{unsrt}
\bibliography{reflist}

\appendix
\section{Solution of the nonlinear system of equations}
\subsection{Method of Continuation}\label{sec:implementation-moc}
The FEM problem formulated in \eqref{eq:fem-problem-vms} is a coupled nonlinear equation that result in a nonlinear algebraic system of equations as
\begin{align}\label{eq:nla-nonlinear-system}
\mvec{b^g}(\mvec{x^g})=\mvec{l^g},
\end{align}
where $ \mvec{b^g} $ and $ \mvec{l^g} $ are the numerical versions of $ \bformh $ and $ \lformh $ respectively; and $ \mvec{x^g} $ is the vector containing all the unknown discrete degrees of freedoms, i.e.,
\begin{align}
\mvec{x^g} = \left( u_{x1},u_{y1}, u_{z1}, p_1, u_{x2},u_{y2}, u_{z2}, p_2,...,u_{xN},u_{yN}, u_{zN}, p_N  \right)
\end{align}
where $ N $ is the total number of nodes in the discretization $ \mesh $. The algebraic problem in  \eqref{eq:nla-nonlinear-system} contain unknowns that span the full spatio-temporal domain $ \Omega $. When solving such equations using the Newton-Raphson scheme, an initial guess for $ \xvec^{\mvec{g}} $ is required. But if the initial guess is far from the exact solution, it is well known that the Newton-Raphson method might fail to converge to the exact solution.

Thus, to overcome this issue, we use a variant of the method of continuation, known as the pseudo-transient continuation (PTC)  \cite{coffey2003pseudotransient, kelley1998convergence}. We do this by embedding the fully coupled space-time system of equations \eqref{eq:nla-nonlinear-system} in an auxiliary evolution space $ \tau $ as follows:
\begin{align}\label{eq:nla-nonlinear-system-ptc}
\partialder{\mvec{x^g}}{\tau} + \mvec{b^g}(\mvec{x^g})=\mvec{l^g}.
\end{align}
If we compare \eqref{eq:nla-nonlinear-system-ptc} with  \eqref{eq:nla-nonlinear-system}, we can infer the role of the variable $ \tau $. In the ideal case, we want  \eqref{eq:nla-nonlinear-system-ptc} and \eqref{eq:nla-nonlinear-system} to yield the same solution for $ \mvec{x^g} $, thereby making $ \partialder{\mvec{x^g}}{\tau} = 0 $. This gives us an opportunity to design an iteration scheme similar to a time-marching algorithm that ``evolves'' the space-time solution $ \mvec{x^g} $  with respect to $ \tau $. Assume a discretization of the auxiliary variable $ \tau $ given by $ \tau^h = (\tau^0, \tau^1,...) $, where the elements in $ \tau^h $ are not necessarily equidistant. With this, we can approximate  \eqref{eq:nla-nonlinear-system-ptc} as
\begin{align}\label{eq:nla-nonlinear-system-ptc-beuler}
\frac{\mvec{x}^{\mvec{g}^{k+1}} - \mvec{x}^{\mvec{g}^{k}} }{\Delta \tau^{k+1}} + \mvec{b^g}(\mvec{x}^{\mvec{g}^{k+1}}) &=\mvec{l^g} \\
\label{eq:nla-nonlinear-system-ptc-beuler-iterations}
\implies \quad\quad\frac{\mvec{x}^{\mvec{g}^{k+1}}}{\Delta \tau^{k+1}} + \mvec{b^g}(\mvec{x}^{\mvec{g}^{k+1}}) &=\frac{\mvec{x}^{\mvec{g}^{k}} }{\Delta \tau^{k+1}} + \mvec{l^g},\quad k = 0,1,2,...,
\end{align}
where $ \Delta \tau^{k+1} = \tau^{k+1}-\tau^k $. With a sufficiently small choice of $ \Delta \tau^{k+1} $,  \eqref{eq:nla-nonlinear-system-ptc-beuler-iterations} is a contraction for $ \mvec{x^g} $, i.e., there exists a $ K^{\infty}\in\mathbb{N} $ such that $\|\mvec{x}^{\mvec{g}^k}- \mvec{x}^{\mvec{g}}_* \| \leq \epsilon$ for all $ k \geq K^{\infty} $. At $ k=0 $, we do not have a solution, rather, we just have an initial guess denoted by $ \mvec{x}^{\mvec{g}^{0}} $. But as we solve  \eqref{eq:nla-nonlinear-system-ptc-beuler-iterations} successively, each $ \mvec{x}^{\mvec{g}^k} $ gets closer and closer to the true solution $ \mvec{x}^{\mvec{g}}_* $.

\eqref{eq:nla-nonlinear-system-ptc-beuler-iterations} is still nonlinear in $ \mvec{x^g} $. Thus applying Newton-Raphson linearization to  \eqref{eq:nla-nonlinear-system-ptc-beuler-iterations}, we have:
\begin{align}\label{eq:nla-linearized-system-ptc-beuler}
\left[ \frac{\mvec{M}}{\Delta \tau^{k+1}} + \mvec{J}(\mvec{x}^{\mvec{g}^{k+1}}_r) \right]\cdot\delta \mvec{x}^{\mvec{g}^{k+1}}_r &= \mvec{z}^{\mvec{g}^{k+1}},\\
\label{eq:nla-newton-update-ptc-beuler}
\mvec{x}^{\mvec{g}^{k+1}}_{r+1} &= \mvec{x}^{\mvec{g}^{k+1}}_{r} + \delta \mvec{x}^{\mvec{g}^{k+1}}_{r},\quad r = 0,1,2,...
\end{align}
where $ \mvec{z}^{\mvec{g}^{k+1}}  = \left(\mvec{x}^{\mvec{g}^k}/\Delta \tau^{k+1} + \mvec{l^g} - \mvec{b^g}(\mvec{x}^{\mvec{g}^k}_r)\right)$. 

Once again, iteration \eqref{eq:nla-newton-update-ptc-beuler} is stopped when $ \norm{\delta\xvec^{\mvec{g}^{k+1}}_r} < \gamma_1$, and the backward Euler iterations \eqref{eq:nla-nonlinear-system-ptc-beuler-iterations} are stopped when $ \norm{\xvec^{\mvec{g}^{k+1}} - \xvec^{\mvec{g}^{k}}} < \gamma_2$, where $ \gamma_1>0 $ and $ \gamma_2>0 $ are preset tolerances of a small magnitude.

\section{Additional details on simulations}
All the simulations in this paper were run on the compute machines provided by the Nova computing cluster at the Iowa State University. These are Intel Xeon Platinum 8358 2.60GHz CPU (``Icelake'') machines with 64 cores on two sockets (32 cores per socket). Also, each node is equipped with 512 GB of memory (RAM).

A few of the 4D simulations were run on the Frontera cluster hosted by TACC. These are Intel Xeon Platinum 8280 2.7GHz CPU (``Cascade Lake'') machines with 56 cores on two sockets (28 cores per socket). These nodes have 192 GB of memory (RAM).

\subsection{Convergence analysis}
The \petsc{} options for the linear basis functions are as follows:
\begin{lstlisting}
-ksp_type bcgs
-pc_type asm
-snes_rtol 1e-10
-snes_ksp_ew
\end{lstlisting}
For quadratic basis functions, the nonlinear relative tolerance was set to 
\begin{lstlisting}
-snes_rtol 1e-12
\end{lstlisting}
\subsection{Lid-driven Cavity}
All simulations in this paper were done using the linear algebra solvers 
provided by $ \petsc $. Below are the command line options for the $ \petsc $ 
linear solvers. $ T $ refers to the physical time horizon of the space-time 
domain. 

The common options used in \emph{all} cases:
\begin{lstlisting}
	-ksp_rtol 1e-6 #default
	-snes_rtol 1e-8
	-snes_atol 1e-8
\end{lstlisting}
Case-by-case options are as follows:


\begin{itemize}
	\item $ Re=1000 $, $ T = 180 $ 
	\\
	\begin{minipage}{0.999\textwidth}
		\begin{minipage}{0.49\textwidth}
			\begin{itemize}		
				\item \textsc{mesh}: $ 32^3 $				
				\begin{lstlisting}
				-ksp_type ibcgs
				-pc_type asm
				-snes_ksp_ew
				\end{lstlisting}
			\end{itemize}
		\end{minipage}
		\begin{minipage}{0.49\textwidth}
			\begin{itemize}		
				\item \textsc{mesh} : $ 128^3 $
				\begin{lstlisting}
				-ksp_type bcgsl
				-pc_type bjacobi #default
				\end{lstlisting}
			\end{itemize}
		\end{minipage}
	\end{minipage}
	\item $ Re=3200 $, $ T = 180 $ 
	\\
	\begin{minipage}{0.999\textwidth}
		\begin{minipage}{0.49\textwidth}
			\begin{itemize}	
				\item \textsc{mesh}: $ 32^3 $
				\begin{lstlisting}
				-ksp_type ibcgs
				-pc_type asm
    				-snes_ksp_ew
				\end{lstlisting}	
			\end{itemize}
		\end{minipage}
		\begin{minipage}{0.49\textwidth}
			\begin{itemize}	
				\item \textsc{mesh} : $ 128^3 $
				\begin{lstlisting}
				-ksp_type bcgsl
				-pc_type bjacobi #default
				\end{lstlisting}	
			\end{itemize}
		\end{minipage}
	\end{minipage}
	\item $ Re=5000 $, $ T = 360 $ 
	\\
	\begin{minipage}{0.999\textwidth}
		\begin{minipage}{0.49\textwidth}
			\begin{itemize}	
				\item \textsc{mesh}: $ 32^3 $
				\begin{lstlisting}
				-ksp_type ibcgs
				-pc_type asm		
				\end{lstlisting}	
			\end{itemize}
		\end{minipage}
		\begin{minipage}{0.49\textwidth}
			\begin{itemize}	
				\item \textsc{mesh} : $ 128^3 $
				\begin{lstlisting}
				-ksp_type ibcgs
				-pc_type asm
				-snes_ksp_ew
				\end{lstlisting}	
			\end{itemize}
		\end{minipage}
	\end{minipage}
	\item $ Re=7500 $, $ T = 720 $ 
	\\
	\begin{minipage}{0.999\textwidth}
		\begin{minipage}{0.49\textwidth}
			\begin{itemize}	
				\item \textsc{mesh}: $ 32^3 $
				\begin{lstlisting}
				-ksp_type bcgsl
				-pc_type bjacobi #default
				\end{lstlisting}	
			\end{itemize}
		\end{minipage}
		\begin{minipage}{0.49\textwidth}
			\begin{itemize}	
				\item \textsc{mesh} : $ 128^3 $
				\begin{lstlisting}
				-ksp_type ibcgs
				-pc_type asm
				-snes_ksp_ew
				\end{lstlisting}	
			\end{itemize}
		\end{minipage}
	\end{minipage}
\end{itemize}



\subsection{Flow past a cylinder}
In \secref{sec:fpc}, the mesh for $ T = 100 $ with $ h_t = 0.1 $ has 4164160 nodes with 4160 nodes per time slice ($ 4160 \times (\nicefrac{100}{0.1}+1) = 4164160 $). \petsc{} options for these simulations:
\begin{lstlisting}
-snes_max_it 1000
-snes_rtol 1e-6
-ksp_rtol 1e-8
-ksp_type fgmres
-pc_type gamg
-mg_levels_ksp_type bcgs
-mg_levels_pc_type asm
-mg_levels_ksp_max_it 20
\end{lstlisting}

\end{document}